\title{Positive Harris recurrence for degenerate diffusions with internal variables and randomly perturbed time-periodic input}
\author{Simon Holbach\footnote{Fakult{\"a}t f{\"u}r Mathematik, Universit{\"a}t Bielefeld, Postfach 10 01 31, 33501 Bielefeld, Germany, e-mail: sholbach$@$math.uni-bielefeld.de. Some of the results featured in this article have been part of the author's PhD thesis \cite{ICHDiss} at Johannes Gutenberg-Universit{\"a}t Mainz.}}
\begin{document}
\maketitle
\begin{abstract}
We consider a multidimensional time-homogeneous dynamical system and add a randomly perturbed time-dependent deterministic signal to some of its components, giving rise to a high-dimensional system of stochastic differential equations, which is driven by possibly very low-dimensional noise. Equations of this type commonly occur in biology when modelling neurons or in statistical mechanics for certain Hamiltonian systems. We provide verifiable conditions on the original deterministic dynamical system under which the solution to the respective stochastic system features a point in the interior of its state space, which can be proved to be attainable by deterministic control arguments, and at which a local H{\"o}rmander condition holds. Together with a Lyapunov condition, it follows that the corresponding process is positive Harris recurrent. 
\end{abstract}

\small{\textbf{Keywords:} degenerate diffusion, periodic ergodicity, Harris recurrence, local H{\"o}rmander condition, deterministic control, dynamical system

\textbf{AMS 2010 subject classification: }60J60, 60J25}

\section{Introduction and main results}
\label{sect:intro}

Let $\tU\subset\R^N$ and $\tV\subset\R^L$. For some functions $f\colon \tU\times\tV \to \R^N$ and $g\colon \tU\times\tV \to \R^L$ and a signal $S\colon[0,\infty)\to\R^N$, consider the deterministic dynamical system
\begin{align}\label{eq:ODE}
	\begin{split}
		\dot x &= f(x,y) + S, \\
		\dot y &= g(x,y),
	\end{split}
\end{align}
where $\dot x$ and $\dot y$ are the time-derivatives of the time-dependent variables $x\colon[0,\infty)\to\tU$ and $y\colon[0,\infty)\to\tV$, respectively.

This system is divided into two groups of variables: The $N$ components of $x$, whose dynamics depend directly on the signal, and the $L$ components of $y$, which are affected by the signal only indirectly through the influence of $x$. Intuitively speaking, we can think of \eqref{eq:ODE} as a dynamical system with no intrinsic time-inhomogeneity, which then receives an additional time-dependent \emph{external input} in some of its variables, while the remaining variables merely describe an interior mechanism. This is why we sometimes refer to $x$ as the \emph{adjustable variable(s)} and $y$ as the \emph{internal variable(s)}. Note that the only source of time-inhomogeneity is indeed the signal -- if the system receives constant external input $S\equiv c \in \R^N$ (or none at all, i.e.\ $c=0$), it is homogeneous in time. Systems of this kind frequently arise in the context of neuroscience and statistical mechanics (see Examples \ref{ex:HH} and \ref{ex:rotors} below).

We construct a stochastic model by following the idea that the signal is not actually received in its original shape, but is subject to random perturbations by external noise (i.e.\ noise that is independent of the rest of the system). Using integral notation, we write \eqref{eq:ODE} as
\begin{align}\label{eq:ODEintegral}
	\begin{split}
		d x(t) &= f(x,y)(t)dt + S(t)dt, \\
		d y(t) &= g(x,y)(t)dt.
	\end{split}
\end{align}
and in order to perturb the signal, it seems natural to substitute the term $S(t)dt$ in \eqref{eq:ODEintegral} with the increment $dZ_t$ of a process taking values in some $\tW\subset\R^N$ and satisfying an SDE of the type
\[
dZ_t = [S(t)+b(Z_t)]dt + \sigma(Z_t)dW_t,
\]
where $W$ is an $M$-dimensional standard Brownian Motion, while $b\colon \tW \to \R^N$ and $\sigma\colon \tW \to \R^{N\times M}$ are suitable drift and diffusion coefficients. Note that this SDE can be viewed as a generalized Ornstein--Uhlenbeck type process with time-dependent mean-reversion level (think of $b(Z_t)=-\beta Z_t$ with $\beta\in(0,\infty)$). A particularly prominent special case is the classical signal in noise model (take $M=N=1$, $b\equiv 0$, and $\sigma\equiv 1$), which arises in a wide variety of fields including communication, radiolocation, seismic signal processing, or computer-aided diagnosis and has been the subject of extensive study.

Perturbing $S(t)$ randomly in this way leads to the stochastic dynamical system
\begin{align}\label{eq:SDE}
	\begin{split}
		dX_t &= f(X_t,Y_t)dt + dZ_t, \\
		dY_t &= g(X_t,Y_t)dt, \\
		dZ_t &= [S(t)+b(Z_t)]dt + \sigma(Z_t)dW_t,
	\end{split}
\end{align}
with state space
\[
\tE:=\tU\times\tV\times\tW\subset\R^{N+L+N}.
\]
This system can be thought of as degenerate in the following sense: Firstly, the equation for $Y$ does not incorporate the driving Brownian Motion $W$ explicitly, making it rather unclear which effect noise has on these components. Secondly, the dimension $M$ of the driving Brownian Motion can (and will usually) be much lower than the dimension $N+L+N$ of the system. This is why we call a stochastic process satisfying a system of SDEs of the type \eqref{eq:SDE} a \emph{degenerate diffusion with internal variables and randomly perturbed time-inhomogeneous deterministic input}.

We now have three groups of variables: The entirely autonomous external input governed by $dZ_t$ (the "noisy signal"), the components of $X$ that are directly adjusted by the noisy signal, and the components of the internal variable $Y$, whose dynamics are only indirectly affected by noise, since the respective differential equations incorporate neither $Z$ nor the driving Brownian Motion $W$ explicitly. Note that for this reason $Y$ is conditionally deterministic given $X$ and has continuously differentiable trajectories.

The system \eqref{eq:SDE} is a generalization of the one introduced in equation (18) of \cite{HLT2}, which is a probabilistic version of a class of dynamical systems that are well-known in the mathematical modelling of neurons (see Example \ref{ex:HH} below). In \cite{ICHLAN} (which can be viewed as a statistical companion article to the present one), we study the problem of estimating parameters that are present in the signal $S$ (in particular its periodicity). Before we explain the focus of the current article, let us introduce some major examples.

\begin{example}\label{ex:HH}
	Let $N=1$, $L=3$, $\tU=\R$, $\tV=[0,1]^3$ and consider the coefficient functions
	\[
	f(x,y)=-36y_1^4(x+12)-120y_2^3y_3(x-120)-0.3(x-10.6)
	\]
	and
	\[
	g(x,y)=\begin{pmatrix} \alpha_1(x)(1-y_1)-\beta_1(x)y_1 \\ \alpha_2(x)(1-y_2)-\beta_2(x)y_2 \\ \alpha_3(x)(1-y_3)-\beta_3(x)y_3	\end{pmatrix}
	\]
	with
	\[
	\begin{array}{llllll}
	\alpha_1(x) &=& \begin{cases} \frac{0.1-0.01x}{\exp(1-0.1x)-1}, & x\neq 10, \\ 0.1, & \text{else},\end{cases}& \beta_1(x)& = & 0.125\exp(-x/80),  \vspace{3mm} \\
	\alpha_2(x) &=& \begin{cases} \frac{2.5-0.1x}{\exp(2.5-0.1x)-1}, & x\neq 25, \\ 1, & \text{else},\end{cases}& \beta_2(x)& = & 4\exp(-x/18), \vspace{2.5mm} \\
	\alpha_3(x) &=& 0.07\exp(-x/20),& \beta_3(x)& = & \frac{1}{\exp(3-0.1x)+1}. \\
	\end{array}
	\]
	for all $(x,y)=(x,y_1,y_2,y_3)^\top\in\R\times [0,1]^3$.
	The corresponding dynamical system \eqref{eq:ODE} is known as the \emph{Hodgkin--Huxley system} and it was first introduced by Hodgkin and Huxley in 1952 (see \cite{HoHu}, note however that we use the slightly different model constants from \cite{Izhi}) with the aim of describing the initiation and propagation of action potentials in the cell membrane of a neuron in response to an external stimulus. While $x$ is the membrane potential itself (usually labelled $V$ in the literature), the internal variables $y_1$, $y_2$, and $y_3$ (commonly denoted by $n$, $m$, and $h$) correspond to the ionic mechanism underlying its evolution. The two predominant ion currents in the cell membrane are import of sodium $Na^+$ and export of potassium $K^+$ through the membrane. Each of the internal variables signifies the probability that a specific type of gate in the respective ion channel is open at a given time. It is for this reason that $n$, $m$, and $h$ are often called gating variables. In the context of this model, the signal $S$ represents the dendritic input that the neuron receives from a large number of other neurons, transported by an even larger number of synapses located on the respective dendritic tree. The resulting "total dendritic input" can then be thought of as an average of interdependent and repeating similar currents, which is why $S$ is usually assumed to be periodic (or even constant). When modelling neurons, particular interest lies in the typical spiking behaviour of the membrane potential, a feature that is commonly agreed upon to be adequately described by the Hodgkin--Huxley model. For a more detailed modern introduction, interpretation, and an in-depth comparison with other neuron models, see for example \cite{Izhi} and \cite{Dest}.
	
	Adding noise in the sense of \eqref{eq:SDE} by choosing	$\sigma\in C^\infty(\tW)$ and $b(Z_t)=- \beta Z_t$ with $\beta\in(0,\infty)$, we acquire the so-called \emph{stochastic Hodgkin--Huxley model (with mean reverting Ornstein--Uhlenbeck type input)}. It was first introduced and studied by H{\"o}pfner, L{\"o}cherbach, and Thieullen in the series of the three papers \cite{HLT1}, \cite{HLT2}, and \cite{HLT3}. The constant $\beta$ is determined by the so-called time constant of the membrane, which represents spontaneous voltage decay not related to the input. For many types of neurons, the time constant is known from experiments (see \cite{Ditlevsen}). A degree of freedom lies in the choice of the diffusion coefficient $\sigma$, which reflects the nature of the influence of noise. In the past, mean reverting Ornstein--Uhlenbeck type equations with various volatilities have been used to model the membrane potential itself (see for example \cite{Lansky} or \cite{HoBio}), and in a sense our stochastic Hodgkin--Huxley model can be viewed as a refinement of this kind of model. If $\sigma$ is Lipschitz continuous, existence of a unique non-exploding strong solution follows from the same arguments as in \cite[Proposition 1]{HLT1} and \cite[Proposition 2]{HLT2}.
	
	In \cite{HLT3}, the authors prove that if the external equation is of classical Ornstein--Uhlenbeck type
	\[
	dZ_t = [S(t)-\gamma Z_t]dt + \sigma dW_t
	\]
	with constant $\sigma\in(0,\infty)$ and $\tW=\R$, or (when the signal $S$ is non-negative) of Cox-Ingersoll-Ross type\footnote{Note that a Cox-Ingersoll-Ross type equation for $Z$ is not contained in our model, since we require its state space to be the full euclidean space and the diffusion coefficient to be defined everywhere on it. This is only of major importance for Section \ref{sect:control}, though.}
	\[
	dZ_t = [a+S(t)-\gamma Z_t]dt + \sqrt{Z_t} dW_t
	\]
	with $2a\in(1,\infty)$ and $\tW=[0,\infty)$, the solution to the stochastic Hodgkin--Huxley system is positive Harris recurrent (see \cite[Theorem 2.7]{HLT3}). Moreover, they quantify the typical spiking behaviour: Almost surely, there are infinitely many spikes but also infinitely many periods of the signal in which no spike occurs (see \cite[Theorem 2.8]{HLT3}). Harris recurrence then enables them to prove a Glivenko-Cantelli type theorem for the interspike intervals (see \cite[Theorem 2.9]{HLT3}).
	
	Analogously, one can introduce stochastic versions of simpler neuron models such as the FitzHugh-Nagumo model (see \cite[equations (4.11) and (4.12)]{Izhi}) or the Morris-Lecar model (see \cite{Morris} or, for a modern version, \cite{Rinzel}).
\end{example}

\begin{example}\label{ex:rotors}
	Systems of coupled oscillators are particularly intuitive Hamiltonian systems and several different stochastic models have recently been subject to research (see e.g\ \cite{Hairer}, \cite{Cuneo}, \cite{Cuneo2}, \cite{Rey-Bellet}, \cite{Rotoren2018}). We illustrate the connection to our model with a simple example.
	
	Let us think of three rotors, each given by their angle $q_i(t)\in\R$ and momentum $p_i(t)\in\R$ at the time $t\in[0,\infty)$ for each $i\in\{1,2,3\}$. Assuming their respective masses to be all equal to $1$ and not taking into account units, the laws of classical mechanics imply
	\begin{equation}\label{eq:rotors0}
	\dot q_i = p_i \quad \text{for all $i\in\{1,2,3\}$.}
	\end{equation}
	We suppose that these rotors are coupled in row, i.e.\
	\begin{align}\label{eq:rotors}
		\begin{split}
			\dot p_1 &= w_1(q_2-q_1)-u_1(q_1), \\
			\dot p_2 &= -[w_1(q_2-q_1) + w_3(q_2-q_3)]-u_2(q_2), \\
			\dot p_3 &= w_3(q_2-q_3)-u_3(q_3),
		\end{split}
	\end{align}
	where $w_1,w_2,w_3\colon\R\to\R$ and $u_1,u_2,u_3\colon\R\to\R$ are related to interaction potentials and pinning potentials, respectively. A classical model is the one that arises if we let one or both of the outer rotors receive external torques and interact with Langevin type heat baths. In order to give a mathematical description of this, we fix	$i\in\{1,3\}$ for the remainder of this paragraph. Applying an external time-dependent torque $S_i\colon[0,\infty)\to\R$ to the $i$-th rotor means expanding the equation for $p_i$ to
	\[
	dp_i = \left[w_i(q_2-q_i)-u_i(q_i)\right]dt +S_i dt,
	\]
	which turns \eqref{eq:rotors0} and \eqref{eq:rotors} into a system like \eqref{eq:ODE}. On top of that, we want to add interaction with a heat bath, i.e.\ for a temperature $\tau_i\in(0,\infty)$ and a dissipation constant $\delta_i\in(0,\infty)$, the equation for $p_i$ is further expanded to
	\begin{align*}
		dp_i &= \left[w_i(q_2-q_i)-u_i(q_i)\right]dt + S_idt - \delta_i p_i dt + \sqrt{2\delta_i\tau_i}dW^{(i)}_t \\
		&=\left[w_i(q_2-q_i)-u_i(q_i)-\delta_i p_i\right]dt  + \left[S_i dt+ \sqrt{2\delta_i\tau_i}dW^{(i)}_t\right],
	\end{align*}
	where the last term in parentheses is the total sum of external influences. Following the spirit of \eqref{eq:SDE}, we may replace this term with the increments of a more general random perturbation of the torque: We take
	\[
	dp_i =\left[w_i(q_2-q_i)-u_i(q_i)-\delta_i p_i\right]dt  + dZ^{(i)}_t
	\]
	with
	\[
	dZ^{(i)}_t=\left[S_i(t)+b_i(Z^{(i)}_t)\right]dt + \sigma_i(Z^{(i)}_t)dW^{(i)}_t
	\]
	for some diffusion coefficient $\sigma_i\colon\R\to\R$ and a drift $b_i\colon\R\to\R$. What we end up with is indeed a degenerate diffusion with internal variables and randomly perturbed time-inhomogeneous deterministic input as in \eqref{eq:SDE}. If only the first rotor in the chain receives an external input, the dimensions are $M=N=1$ and $L=5$, $\tE=\R^7$. If both of the outer rotors receive an external input, the dimensions are $M=N=2$ and $L=4$, $\tE=\R^8$.
	
	In the present text, this example will mainly serve as a hint at the potential of studying the general system \eqref{eq:SDE} beyond the realm of neuroscience -- which the theory displayed in this article is predominantly inspired by. Only certain parts of our results are applicable to this stochastic rotor system, but there are enough connections to spawn optimism for future work that may unify some of the approaches used in different settings.
	
	Let us stress that the differences between the eight-dimensional system above with input on both ends of the chain and the six-dimensional system that is discussed in \cite{Cuneo} mainly lie in the role of the external equations and in time-inhomogeneity: Forcing the latter into our notational framework, the equations for $X$ and $Y$ would remain unaltered, while the external equations would read
	\[
	dZ^{(i)}_t=S_i dt+ \sqrt{2\delta_i\tau_i}dW^{(i)}_t, \quad i \in\{1,2\},
	\]
	where the coefficient functions do not depend on $Z^{(i)}_t$. Therefore, including these variables separately into the system is rendered obsolete. In addition to that, with the external torque in \cite{Cuneo} being constant, their entire system is homogeneous in time -- in contrast to ours. Another important difference is that while the system in \cite{Cuneo} lets the angle take values in the torus $\R \mod 2\pi$, the state space for \eqref{eq:SDE} has to be a proper subset of $\R^{N+L+N}$ (without any topological identifications). Therefore, the system we describe here describes oscillators but not rotors in a strict sense.
	
	For the six-dimensional system from \cite{Cuneo}, the authors' main result (Theorem 1.3) states the existence of smooth transition densities, unique existence of an invariant measure with a smooth density, and finally establishes a subgeometric convergence rate of the semi-group to the invariant measure.
\end{example}

\begin{example}\label{ex:toy}
	For a convenient toy example, let $\tU=\tW=\R^N$ with $N\in\{1,2\}$ and $\tV=\R^L$. Let
	\[
	f(x,y)=-\big(1+\abs{y}^2\big) \big(\abs{x}^2-1\big) x=-\big(1+\abs{y}^2\big) \frac14 \nabla_x \big(\abs{x}^4-\abs{x}^2\big) \quad \text{for all $x\in\R^N$, $y\in\R^L$.}
	\]
	This means that the $x$-variables are subject to a double well potential (for $N=1$) or a mexican hat potential (for $N=2$) where the slope is determined by the current $y$-position, but is bounded away from zero.
	
	For $L=1$, let 
	\begin{equation}\label{eq:toyg1}
	g(x,y)=-\big(1+\abs{x}^2\big) y + \sin\big(\abs{x}^2-1\big) \quad \text{for all $x\in\R^N$, $y\in\R$.}
	\end{equation}
	By the first summand, the $y$-variable experiences exponential decay with a rate that depends on $x$, but is again bounded away from zero. The second summand represents some bounded $x$-dependent source that vanishes at the bottom of the wells or of the hat.
	
	A different choice for $g$ in the case $N=1$ (but for any $L$) is given by
	\begin{equation}\label{eq:toyg2}
	g(x,y)= %\begin{pmatrix}x -y_1\\  y_1-y_2 \\ \vdots \\ y_{L-1}-y_L	\end{pmatrix}
	%-y+(\cos 2\pi x, y_1, \ldots , y_{L-1})^\top
	(h(x) -y_1,  y_1-y_2, \ldots , y_{L-1}-y_L)^\top
	\quad \text{for all $x\in\R$, $y\in\R^L$,}
	\end{equation}
	where $h\colon\R\to\R$ is a bounded smooth function with $h(x)=x^2$ for all $x\in (-2,2)$. Here, the dependence of each component on the variables takes a cascade structure.

	For the external equation, we take $\tW=\R^N$, and
	\begin{equation}\label{eq:toysigma}
	dZ_t=[S(t)-\beta Z_t]dt+\sigma(Z_t) dW_t
	\end{equation}
	where $\beta\in\R^{N\times N}$ is strictly positive definite and all entries of $\sigma\colon\R^N\to\R^{N\times M}$ are bounded.
\end{example}

The objective of this article is to find verifiable conditions on \eqref{eq:ODE} and the way in which the external equation of \eqref{eq:SDE} perturbs the signal, under which positive Harris recurrence of $(X,Y,Z)$ can be established in spite of its apparent degeneracy. Positive Harris recurrence is the foundation of a wide class of limit theorems for Markov processes, most prominently the Ratio Limit Theorem and Orey's Ergodic Theorem. Classical references for the theory of recurrence in the sense of Harris include \cite{Harris}, \cite{Revuz}, and \cite{Nummelin} in the discrete-time setting and \cite{Azema} in the continuous-time case. Of course, in order to make use of the respective theory, one has to look for some time-homogeneous substructure, which suggests the basic assumption that the signal function $S$ should be periodic. In this setting, one can modify known tools and provide a strategy to prove positive Harris recurrence (see Theorem \ref{thm:HLT} below, c.f.\ \cite{Mattingly}, \cite{HLT3}). Our main results provide conditions on the deterministic system \eqref{eq:ODE} under which this strategy can be applied to the stochastic system \eqref{eq:SDE}. As one of the respective ingredients, Section \ref{sect:control} shows:
\begin{framed}
	\noindent\textbf{Result 1.} If $T$ and $T^*$ are incommensurable and $(x^*,y^*)$ lies on the trajectory of an attractive $T^*$-periodic solution to the deterministic system \eqref{eq:ODE} with appropriate periodic input signal $S$, the solution $(X,Y,Z)$ to the stochastic system \eqref{eq:SDE} with \emph{any} $T$-periodic signal $S$ will almost surely visit every neighborhood of $(x^*,y^*,z^*)$ with $z^*\in\tW$, even if time is restricted to multiples of $T$. The only assumption on the external equation is pointwise surjectivity of $\sigma(\cdot)\in\R^{N\times M}$, no assumption is needed that noise is small in any sense.
\end{framed}
\noindent This is obtained via control arguments relying on the Support Theorem for diffusions. The main corresponding result in precise terms are Theorems \ref{thm:controlsimple}, \ref{thm:control}, and \ref{thm:controlperiodic}. The second main ingredient is the content of Section \ref{sect:lwh} and can be summarized as follows:
\begin{framed}
	\noindent\textbf{Result 2.} If $\sigma$ is nice, simple and verifiable conditions on the derivatives of $f$ and $g$ imply a local H{\"o}rmander condition, which in turn yields local transition densities for $(X,Y,Z)$.
\end{framed}
\noindent This is the content of Section \ref{sect:lwh}. We give essentially two different sets of conditions on the derivatives of $f$ and $g$ (Theorems \ref{cor:stardiag} and \ref{thm:cascade}), both of which have a clear interpretation and can be used for applications. The third main ingredient for the strategy arising from Theorem \ref{thm:HLT} is a Lyapunov condition that is more reliant on ad hoc arguments and that is briefly touched upon in Remark \ref{rem:lyapunov}.

\section{Precise setting and general technique}

For the sake of better readability, we will often use the short notation
\[
	\Phi=(\phi,z)=(x,y,z)\in\tU\times\tV\times\tW=\tE.
\]
With $\phi(t)=(x(t),y(t))$ for all $t\in[0,\infty)$, we can then rewrite \eqref{eq:ODE} as
\begin{equation}\label{eq:ODEphi}
\dot\phi(t)=F_S(t,\phi(t)),
\end{equation}
where
\begin{equation}
F_S\colon [0,\infty)\times\tU\times\tV\to\R^{N+L}, \quad (t,\phi) \mapsto \begin{pmatrix} f(\phi) + S(t) \\ g(\phi) \end{pmatrix}.
\end{equation}
If it exists, we write $\phi[\phi_0,S](t)$ for the unique solution of \eqref{eq:ODEphi} at time $t\in[0,\infty)$ with starting condition $\phi(0)=\phi_0\in \tU\times\tV$. We also write 
\begin{equation}\label{eq:FohneS}
	F(\phi):=F_{S\equiv0}(t,\phi)=\begin{pmatrix} f(\phi) \\ g(\phi) \end{pmatrix} \quad \text{for all $(t,\phi)\in[0,\infty)\times\tU\times\tV$.}
\end{equation}
While we will often work with different signals that are fed into the deterministic system \eqref{eq:ODEphi}, for the random system we fix one reference signal
\[
	S_0\colon[0,\infty)\to\R^N.
\] 
Similarly as above, if we set $\Phi_t:=(X_t,Y_t,Z_t)$ for all $t\in[0,\infty)$, we can express the diffusion equation \eqref{eq:SDE} as
\begin{equation}\label{eq:SDEPhi}
d\Phi_t=B(t,\Phi_t)dt+\Sigma(\Phi_t)dW_t
\end{equation}
with drift
\begin{equation}\label{eq:B}
B\colon [0,\infty)\times \tE \to \R^{N+L+N}, \quad (t,\Phi)\mapsto \begin{pmatrix}
f(\phi) +S_0(t)+b(z)\\
g(\phi) \\
S_0(t)+b(z)
\end{pmatrix},
\end{equation}
and diffusion matrix
\begin{equation}\label{eq:Sigma}
\Sigma \colon \tE \to \R^{(N+L+N)\times M}, \quad \Phi\mapsto \begin{pmatrix}
\sigma(z)\\
0_{L\times M} \\
\sigma(z)
\end{pmatrix}.
\end{equation}
We fix a probability space $(\Omega,\cA,\P)$ and on it an $M$-dimensional Brownian Motion $W$.

\begin{remark}
The most practically relevant results are acquired in the case $M=N$, but distinguishing notationally between the space dimension and the dimension of the driving Brownian motion can make some arguments a little clearer and easier to read.
\end{remark}

The following basic assertions will be assumed to hold throughout this article.
\begin{enumerate}
	\item[\textbf{(A0)}]
	For all deterministic starting points $\Phi_0 \in \tE$, the equation \eqref{eq:SDEPhi} has a unique non-explosive strong solution $(\Phi_t)_{t\in[0,\infty)}$ on $(\Omega,\cA)$ under $\P$.
	\item[\textbf{(A1)}] $S_0$ is periodic with periodicity $T\in(0,\infty)$.
	\item[\textbf{(A2)}] All of the coefficient functions $F$, $G$, $S_0$, $b$, and $\sigma$ have derivatives of any order with respect to any of their variables.
	\item[\textbf{(A3)}] There is a strictly increasing sequence $(G_n)_{n\in\N}$ of bounded open convex subsets of $\tE$ with $\bigcup_{n\in\N}\overline{G_n}=\tE$ such that for any $\Phi_0 \in \tE$, the following properties hold $\P$-almost surely:
	\begin{enumerate}
		\item If $\Phi_0\in \d\tE$, then $\inf \{t\in(0,\infty) \,|\, \Phi_t \in \interior(\tE) \}=0$.
		\item If $\Phi_0\in \overline{G_n}\setminus G_{n+1}$ for some $n\in\N$, then $\inf\{t\in(0,\infty)\,|\, \Phi_t \in G_{n+1} \}=0$.
		\item We have $\inf\{ t\in(0,\infty) \,|\, \Phi_t \notin \overline{G_n}\} \xrightarrow{n\to\infty}\infty$.
	\end{enumerate}
\end{enumerate}

Note that in view of (A3), $\tU$, $\tV$ and $\tW$ are necessarily convex, $\sigma$-compact sets with non-empty interior. The condition (A3) can be checked for the stochastic Hodgkin--Huxley model (see \cite[page 533]{HLT3}), and in the oscillator system from Example \ref{ex:rotors} it becomes trivial since $\tE=\R^{N+L+N}$, which is also the case in Example \ref{ex:toy}.

Under (A1), the entire drift of \eqref{eq:SDE} is $T$-periodic in time, and consequently the corresponding transition semi-group $\left(P_{s,t}\right)_{t> s\ge 0}$ has the property
\begin{equation}\label{eq:semigroupperiodic}
P_{s+kT,t+kT}=P_{s,t} \quad \text{for all $t>s\ge 0$ and $k\in\N$.}
\end{equation}
This implies that the $T$-grid chain $(\Phi_{nT})_{n\in\N_0}$ is a discrete time Markov chain with no time-inhomogeneity. As such, its recurrence properties can be studied with classical methods. Our general method is summarized in the following theorem, which is in its essence a time-inhomogeneous variant of \cite[Theorem 2.5]{Mattingly} (compare \cite[Theorem 15.0.1]{MeynTweedie}) which is also the foundation of \cite[Theorem 2.2]{HLT3}. 

\begin{thm}\label{thm:HLT}
	Grant the following assumptions:
	\begin{enumerate}
		\item[\emph{\textbf{(I)}}] There is a \emph{Lyapunov function for the $T$-grid chain}, i.e. there are a measurable function $V\colon\tE\to[1,\infty)$ and a compact set $K\subset\tE$ such that
		\[
			V(\Phi)\xrightarrow{\abs{\Phi}\to\infty}\infty, \quad \sup_K P_{0,T}V <\infty, \quad \text{and} \quad \inf_{\tE \setminus K}(1-P_{0,T})V>0.
		\]
		\item[\emph{\textbf{(II)}}] There is a point $\Phi^*\in\interior(\tE)$ that is \emph{$T$-attainable}, i.e. for any starting point $\Phi_0\in \tE$ and any $\eps>0$ there is an $n=n(\Phi_0,\eps)\in\N$ such that
		\[
			P_{0,nT}\big(\Phi_0, B_\eps(\Phi^*)\big) >0.
		\]	
		\item[\emph{\textbf{(III)}}] There is an open neighborhood $\cU^*$ of $\Phi^*$ such that for all $n\in\N$ the kernel $P_{0,nT}$ \emph{admits a lower semi-continuous local Lebesgue-density} $p_{0,nT}\colon \tE \times \cU^*\to[0,\infty)$, i.e.
		\[
			P_{0,nT}(\Phi_0,B)=\int_B p_{0,nT}(\Phi_0,\Psi)d\Psi \quad \text{for all $\Phi_0\in\tE$ and measurable sets $B\subset\cU^*$.}
		\]
	\end{enumerate}
	Then the $T$-grid chain $(\Phi_{nT})_{n\in\N_0}$ is positive Harris recurrent.
\end{thm}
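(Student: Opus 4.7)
The plan is to transfer the problem to a time-homogeneous discrete setting and invoke a standard Foster--Lyapunov drift-and-minorization criterion. By (A1) and \eqref{eq:semigroupperiodic}, the $T$-grid chain $(\Phi_{nT})_{n\in\N_0}$ is time-homogeneous with one-step kernel $P:=P_{0,T}$, and $P^n=P_{0,nT}$ for every $n\in\N$. It therefore suffices to verify the three ingredients of \cite[Theorem 15.0.1]{MeynTweedie}: (a) $\psi$-irreducibility for some non-trivial $\psi$, (b) petiteness of every compact subset of $\tE$, and (c) a Foster drift inequality. Item (c) is an immediate reformulation of (I): writing $b:=\sup_K PV<\infty$ and $\delta:=\inf_{\tE\setminus K}(1-P)V>0$, one obtains
\[
PV\;\leq\; V-\delta + (b+\delta-1)\mathbf{1}_K,
\]
and the coercivity of $V$ guarantees that all sublevel sets $\{V\leq R\}$ are relatively compact in $\tE$.

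The core of the argument is (b). Fix a compact $C\subset\tE$. For every $\Phi_0\in C$, assumptions (II) and (III) combine to produce an integer $n(\Phi_0)\in\N$ and a point $\Psi(\Phi_0)\in\cU^*$ such that $p_{0,n(\Phi_0)T}(\Phi_0,\Psi(\Phi_0))>0$: indeed, (II) delivers positive transition mass into any small ball around $\Phi^*$, which via (III) must be witnessed by a strictly positive value of the density somewhere in that ball. Joint lower semi-continuity of $p_{0,n(\Phi_0)T}$ then produces open neighborhoods $U(\Phi_0)\ni\Phi_0$ and $V(\Phi_0)\ni\Psi(\Phi_0)$ together with a constant $c(\Phi_0)>0$ on which the density is bounded below by $c(\Phi_0)$. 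Compactness of $C$ yields a finite subcover $U(\Phi_0^{(1)}),\dots,U(\Phi_0^{(r)})$, and a suitable convex combination of the kernels $P^{n(\Phi_0^{(i)})}$ dominates a single non-trivial sub-probability measure $\nu$ (supported in $\cU^*$ and absolutely continuous with respect to Lebesgue measure) uniformly for all $\Phi_0\in C$. This is exactly the petiteness condition.

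For (a), one may take $\psi:=\nu$ from the petiteness step applied to $C=\overline{B_\eps(\Phi^*)}$ with $\eps>0$ small enough that $C\subset\cU^*$. Given any starting point $\Phi_0\in\tE$, assumption (II) yields some $n_0$ with $P^{n_0}(\Phi_0,B_\eps(\Phi^*))>0$, and Chapman--Kolmogorov composed with the minorization just established turns this into $\sum_{n\in\N}P^n(\Phi_0,A)>0$ for every Borel $A$ with $\nu(A)>0$, establishing $\nu$-irreducibility.

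Assembling (a), (b), and (c), \cite[Theorem 15.0.1]{MeynTweedie} (or the time-inhomogeneous variant in \cite{Mattingly} underlying \cite[Theorem 2.2]{HLT3}) yields positive Harris recurrence of the $T$-grid chain. The real obstacle is step (b): assumption (II) only supplies pointwise information (each starting point individually reaches every neighborhood of $\Phi^*$), so the joint lower semi-continuity of the densities from (III) together with compactness of $C$ have to carry all the weight of converting this pointwise positivity into a uniform-in-$\Phi_0$ minorization by a single sub-probability measure $\nu$.
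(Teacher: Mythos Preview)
Your overall architecture --- reduce to the homogeneous chain with kernel $P=P_{0,T}$, then feed a drift condition, irreducibility, and petiteness into a Meyn--Tweedie/Nummelin type criterion --- is exactly the paper's strategy. The difference lies in how the petiteness (smallness) is manufactured, and there your argument has a genuine gap.

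In step (b) you cover a compact $C$ by finitely many sets $U(\Phi_0^{(i)})$ on which $P^{n_i}(\cdot,\cdot)\ge c_i\,\mathbf{1}_{V(\Phi_0^{(i)})}$, and then assert that ``a suitable convex combination of the kernels $P^{n_i}$ dominates a single non-trivial sub-probability measure $\nu$ uniformly for all $\Phi_0\in C$.'' This does not follow: for a point $\Phi$ belonging only to $U(\Phi_0^{(i)})$ your lower bound is $a_i c_i\,\mathrm{Leb}|_{V(\Phi_0^{(i)})}$, which depends on $i$. If the target sets $V(\Phi_0^{(i)})$ are pairwise disjoint --- and nothing in your construction prevents this, since they come from lower semi-continuity and may be arbitrarily small --- the only measure dominated simultaneously by all of these is the zero measure. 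The compactness/finite-cover trick converts pointwise data into uniform data only when the \emph{target} is held fixed; here both the step count $n_i$ and the target $V(\Phi_0^{(i)})$ vary with $i$.

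The paper handles precisely this obstruction by passing to the resolvent kernel $R=\sum_{n\ge1}2^{-n}P^n$. The $n$-dependence in (II) then disappears, $R$ inherits (via Fatou) a single lower semi-continuous density $r$ on $\tE\times\cU^*$, and one locates \emph{once and for all} open sets $C\ni\Phi^*$ and $D\ni\Psi^*$ with $R\ge\alpha\,\mathbf{1}_C\otimes\cU_D$. Now $\Phi_0\mapsto R(\Phi_0,C)$ is lower semi-continuous and strictly positive everywhere by attainability, so $\inf_K R(\cdot,C)>0$ on the compact $K$ from (I), and $K$ becomes small for $R^2$ with the \emph{fixed} minorizing measure $\cU_D$. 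If you want to stay with $P$ rather than $R$, the same two-stage structure salvages your argument: first apply your density step with the single starting point $\Phi_0=\Phi^*$ to produce one small neighborhood $U^*\ni\Phi^*$ with a fixed minorizing measure, and only then run the cover argument on $C$, now using lower semi-continuity of $\Phi\mapsto\int_{U^*}p_{0,mT}(\Phi,\Psi)\,d\Psi$ to obtain a uniform positive hitting probability of $U^*$ and compose.
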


Condition (I) is classical and can often be treated with ad hoc arguments for the specific system. We only deal with it explicitly in Remark \ref{rem:lyapunov} below. For (II) and (III), we follow a strategy that is also used in \cite{HLT3}: In Section \ref{sect:control}, we present stability properties of \eqref{eq:ODEphi} under which we can prove (II) by constructing suitable control paths and applying the Support Theorem. In Section \ref{sect:lwh}, we investigate properties of the coefficient functions that allow to check (III) via a local variant of H{\"o}rmander's condition.

\begin{remark}\label{rem:HLT}
1.) It is well-known that if the $T$-grid chain is positive Harris recurrent, then the same is true for the $C([0,T];\tE)$-valued path-segment chain $\big((\Phi_t)_{t\in[(n-1)T,nT]}\big)_{n\in\N}$ and the $[0,T]\times\tE$-valued time-space process $(t\mod T,\Phi_t)_{t\in[0,\infty)}$.

2.) Note that by Lemma \ref{lem:attainthesame} below, (II) is slightly weaker than "attainability in a sense of deterministic control" as in \cite[Definition 2.1]{HLT3}.

3.) In \cite{HLT3}, Theorems 1 and 2 from \cite{HLT2} are used to check (III). In their original statements, these theorems treat regularity of $p_{0,nT}$ only \emph{separately} in $\tE$ and $\cU^*$, but it is crucial for the proof of Theorem \ref{thm:HLT} that lower semi-continuity holds \emph{jointly} in $\tE\times\cU^*$. Since $\cU^*\ni\Psi\mapsto p_{0,nT}(\Phi,\Psi)$ is equi-continuous with respect to $\Phi\in\tE$ (as noted at the end of the proof of \cite[Theorem 1]{HLT2}), this can be fixed in that case.

4.) Note that the Lyapunov condition (I) is slightly weaker than its counterpart in \cite[Theorem 2.5]{Mattingly} where
\begin{equation}\label{eq:Lyastrong}
P_{0,T}V\le \alpha V + \beta \quad \text{for some $\alpha\in(0,1)$ and $\beta\in(0,\infty)$,}
\end{equation}
which does not only yield positive Harris recurrence but even exponential ergodicity.
\end{remark}

\begin{remark}\label{rem:lyapunov}
In \cite{ICHDiss}, we give variants of classical drift conditions under which a Lyapunov function with the property \eqref{eq:Lyastrong} can be constructed. Possible choices for the external equation include classical multi-dimensional Ornstein-Uhlenbeck processes (see \cite[Example 2.15]{ICHDiss} for more details). The relevant conditions on $f$ and $g$ cover the stochastic Hodgkin--Huxley model in particular, meaning that the conclusion of \cite[Theorem 2.7]{HLT3} is immediately improved from just positive Harris recurrence to exponential ergodicity (which is not commented on in \cite{HLT3}, even though the Lyapunov function given there in the proofs of Propositions 2.5 and 2.6 is not essentially different from the one in \cite[Example 2.16]{ICHDiss}). For the toy model from Example \ref{ex:toy}, \cite[Theorem 2.11]{ICHDiss} yields the existence of a Lyapunov function with the property \eqref{eq:Lyastrong}. However, these methods fail for the oscillator model from Example \ref{ex:rotors}. For the rotor system from \cite{Cuneo} with no external equations, the authors construct a Lyapunov function yielding stretched exponential ergodicity (see \cite[Proposition 2.2 and Section 4]{Cuneo}). The key to their result lies in understanding the slow dynamics of the momentum $p_2$ of the middle rotor and using appropriate averaging techniques.
\end{remark}

In order to make the strategy more transparent, we would like to give a short proof of Theorem \ref{thm:HLT} condensing the reasoning used in \cite{HLT3} to adjust \cite[Theorem 2.5]{Mattingly} to the time-periodic setting. We also use this occasion to fix some minor gaps and inaccuracies in \cite{HLT3}. Let us stress, though, that we make no claims of originality.

\begin{proof}[Proof of Theorem \ref{thm:HLT}]
The main idea is to combine (II) and (III) in order to prove that the set $K$ from (I) is small in the sense of Nummelin and then apply \cite[Theorem 3.7(v) and Proposition 5.10]{Nummelin}. As the dependence on the initial condition in (II) is problematic for this approach, we get rid of it by switching to the resolvent kernel
\begin{equation*}\label{eq:samplekernel}
R:=\sum_{n=1}^\infty 2^{-n}P_{0,nT}= \sum_{n=1}^\infty 2^{-n}P_{0,T}^n
\end{equation*}
corresponding to the resolvent chain, i.e.\ the Markov chain arising from sampling the grid chain at independent geometric times. The grid chain is positive Harris recurrent if and only if the resolvent chain is, and we shift our focus to the latter for the rest of this proof.

By (II), we have
\[
	R\big(\Phi_0,B_\eps(\Phi^*)\big) >0 \quad \text{for all $\Phi_0\in \tE$ and $\eps>0$},
\]
and it is easy to see that (I) remains valid with $R$ in place of $P_{0,T}$, while (III) yields (via Fatou's Lemma) that $R$ admits a lower semi-continuous local density $r\colon \tE \times \cU^*\to[0,\infty)$.\footnote{Note that \cite[Corollary 4.2]{HLT3} claims without proof that smoothness in the second variable is also passed on from $p_{0,nT}$ to $r$. It is not clear why this should be true in general, but luckily this is irrelevant for the proof of Theorem \ref{thm:HLT}.} %Similarly, the mapping $\tE \ni \Phi \mapsto R(\Phi,B)$ is also lower semi-continuous for any measurable $B\subset \cU^*$.
The above implies
\[
	\int_{\cU^*}r(\Phi^*,\Psi)d\Psi=R\big(\Phi^*,\cU^*\big)>0
\]
and thus the existence of $\Psi^*\in\cU^*$ with $r(\Phi^*,\Psi^*)>0$. By lower semi-continuity there are open neighborhoods $C\subset\tE$ of $\Phi^*$ and $D\subset\tE$ of $\Psi^*$ such that $r$ is bounded away from zero on $C\times D$. This can be translated into the minorization condition
\[
	R\ge \alpha 1_C \otimes \cU_D,
\]
where $\alpha>0$ and $\cU_D$ denotes the uniform distribution on $D$. By attainability of $\Phi^*$ and by lower-semi-continuity (and again Fatou's Lemma) we have
\[
	\beta:=\inf_{\Phi_0\in K}R(\Phi_0,C)>0.
\]
Since $K$ is visited infinitely often by the resolvent chain as a consequence of (I), Borel-Cantelli now yields that so is $C$. As a byproduct of this, there automatically exists an irreducibility (even invariant) measure for $R$ that charges $C$ (compare \cite[Proof of Theorem 3]{Nummelinpaper}). Taken all together, $C$ is a small set that is visited infinitely often, and hence \cite[Theorem 3.7(v)]{Nummelin} yields that the resolvent chain is Harris recurrent. We denote its unique (up to a multiplicative constant) invariant measure by $\mu$. As $K$ is visited infinitely often, it follows from \cite[Proposition 2.4]{Nummelin} that $\mu(K)>0$. Since
\[
R^2\ge R\circ(\alpha 1_C \otimes \cU_D)\ge \beta \alpha 1_K \otimes \cU_D,
\]
this means that $K$ is also a small set with respect to $R$ and hence (I) and \cite[Proposition 5.10]{Nummelin} yield that the resolvent chain is positive Harris recurrent.
\end{proof}

%But in order to rigorously deduce Harris recurrence, it still remains to show that there is an irreducibility measure $\nu$ with $\nu(C)>0$. This small detail is overlooked in \cite{HLT3}.

%The same argument as above yields open $\tilde C,\tilde D\subset C$ with $\Phi^*\in\tilde C$ such that $r$ is bounded away from 0 on $\tilde C\times\tilde D$. Taking $\nu$ as the measure with Lebesgue density $1_{\tilde D} \inf_{\Psi\in\tilde C}r(\Psi,\,\cdot\,)$, we see that for all $B\in\cB(\tE)$ with $\nu(B)>0$ and $\Phi_0\in\tE$ we have
%\[
%	R^2(\Phi_0,B)\ge \int_{\tilde C} \left(\inf_{\Psi\in\tilde C} R(\Psi,B\cap\tilde D)\right) R(\Phi_0,d\Phi) \ge\nu(B)R(\Phi_0,\tilde C)>0.
%\]
%Thus, $R$ is $\nu$-irreducible, and as $\nu(C)>0$, this completes the proof that $C$ is small with respect to $R$. 

%Knowing that the resolvent chain is Harris recurrent, we can denote its unique (up to a multiplicative constant) invariant measure by $\pi$.

\section{Deterministic control}\label{sect:control}

In this section, we will present a technique that uses stability properties of the deterministic dynamical system \eqref{eq:ODE} in order to prove condition (II) of Theorem \ref{thm:HLT} via the Support Theorem for diffusions (see Lemma \ref{lem:attainthesame} below).

We work under the following assumption.
\begin{enumerate}
	\item[\textbf{(C1)}] \textbf{Non-degeneracy of $Z$:} At each point $z\in\tW$, the linear mapping that is represented by $\sigma(z)\in\R^{N\times M}$ is surjective. Thus, $\sigma(z)$ has a right inverse, and we simply denote it by $\sigma^{-1}(z)\in\R^{M\times N}$.
\end{enumerate}
Note that surjectivity of the linear mapping $\sigma(z)\colon \R^M\to\R^N$ implies $M\ge N$, so (C1) guarantees that the external equation for $Z$ is not degenerate itself (compare condition (H2) in Section \ref{sect:lwh}). Also note that $\sigma^{-1}(z)$ depends continuously on $z\in\tW$, as $\sigma(\cdot)$ is continuous and so is the function that maps a surjective matrix to its right inverse.

Since we want to apply the Support Theorem, let
\begin{equation}\label{eq:stratodrift}
\tilde b\colon \tW\to\R^N,\quad z\mapsto b(z) - \frac12 \sum_{i=1}^N \sum_{j=1}^M \sigma_{i,j}(z)  \begin{pmatrix} \d_{z_i}\sigma_{1,j}(z)\\ \vdots \\ \d_{z_i}\sigma_{N,j}(z)\end{pmatrix},
\end{equation}
denote the Stratonovich version of the drift $b$ with respect to $\sigma$, i.e.\ $\tilde b$ is the drift coefficient such that $Z$ satisfies
\[
dZ_t=[S_0(t)+\tilde b (Z_t)]dt+\sigma(Z_t)\circ dW_t,
\]
where "$\circ \, dW_t$" indicates that we interpret the stochastic integral in the sense of Stratonovich instead of It\={o}. Hence, if we define $\tilde B$ in the same way as $B$ in \eqref{eq:B}, only replacing $b$ with $\tilde b$, we obviously get the Stratonovich drift for the entire system \eqref{eq:SDE}, i.e.\
\[
d\Phi_t=\tilde B (t,\Phi_t)dt+\Sigma(\Phi_t)\circ dW_t.
\]

\begin{lem}\label{lem:attainthesame}
	A point $\Phi^* \in \interior(\tE)$ is $T$-attainable in the sense of condition (II) of Theorem \ref{thm:HLT} if and only if for any starting point $\Phi_0\in \tE$ there are $\dot h \in \L^2_{\mathrm{loc}}\big([0,\infty);\R^M\big)$ and a solution $\Psi\in C([0,\infty);\tE)$ to the deterministic integral equation
	\begin{equation}\label{eq:controlsystem}
		d\Psi(t)=\tilde B \big(t,\Psi(t)\big)dt+\Sigma\big(\Psi(t)\big) \dot h(t) dt \quad \text{for all $t\in[0,\infty)$,}
	\end{equation}
	such that
	\begin{equation}\label{eq:controlconditions}
		\Phi_0=\Psi(0) \quad \text{and} \quad \Phi^*\in \overline{\Psi(\N T)}.
	\end{equation}
	A solution to \eqref{eq:controlsystem} is called a \emph{control path}.
\end{lem}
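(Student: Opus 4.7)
The proof uses the Stroock--Varadhan Support Theorem applied to the Stratonovich formulation of \eqref{eq:SDEPhi}. Under (A2) the coefficients $\tilde B$ and $\Sigma$ are smooth, and by (A1) the $t$-dependence through $S_0$ is $T$-periodic, so appending $t \mod T$ to the state reduces the situation to the standard time-homogeneous setting (no explosion follows from (A0)/(A3)). Hence, for each $U>0$ and $\Phi_0\in\tE$, the support of the law on $C([0,U];\tE)$ of $(\Phi_t)_{t\in[0,U]}$ starting at $\Phi_0$ equals the closure of the set of all solutions $\Psi$ of \eqref{eq:controlsystem} on $[0,U]$ with $\Psi(0)=\Phi_0$ and $\dot h\in\L^2([0,U];\R^M)$. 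Since the evaluation map $\omega\mapsto\omega(U)$ on $C([0,U];\tE)$ is continuous, the support of the time-$U$ marginal of the diffusion equals the closure (in $\tE$) of the endpoints $\{\Psi(U)\}$ of all such control paths.

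For the ``if'' direction, suppose $\Psi$ and $\dot h$ satisfy \eqref{eq:controlsystem} and \eqref{eq:controlconditions}. Given $\varepsilon>0$, choose $n\in\N$ with $|\Psi(nT)-\Phi^*|<\varepsilon/2$. The tube $\{\omega\in C([0,nT];\tE) : \sup_{[0,nT]}|\omega-\Psi|<\varepsilon/2\}$ is an open neighborhood of $\Psi$ in the support of the law of $(\Phi_t)_{t\in[0,nT]}$ started at $\Phi_0$ and therefore carries positive probability, which gives $P_{0,nT}(\Phi_0,B_\varepsilon(\Phi^*))>0$.

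For the ``only if'' direction, fix $\Phi_0\in\tE$ and build the control path iteratively. Set $\Xi_0:=\Phi_0$ and $m_0:=0$. Given $\Xi_{k-1}\in\tE$ and $m_{k-1}\in\N_0$, apply (II) with starting point $\Xi_{k-1}$ and tolerance $1/k$ to obtain $n_k\in\N$ with $P_{0,n_kT}(\Xi_{k-1},B_{1/k}(\Phi^*))>0$. By the marginal support characterisation from the first paragraph (and using that $B_{1/k}(\Phi^*)$ is open), there exist $\dot h^{(k)}\in\L^2([0,n_kT];\R^M)$ and a solution $\tilde\Psi^{(k)}$ of \eqref{eq:controlsystem} on $[0,n_kT]$ with $\tilde\Psi^{(k)}(0)=\Xi_{k-1}$ and $\Xi_k:=\tilde\Psi^{(k)}(n_kT)\in B_{1/k}(\Phi^*)$. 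Set $m_k:=m_{k-1}+n_k$ and define the time-shifted pieces $\Psi(t):=\tilde\Psi^{(k)}(t-m_{k-1}T)$ and $\dot h(t):=\dot h^{(k)}(t-m_{k-1}T)$ for $t\in[m_{k-1}T,m_kT]$. The $T$-periodicity of $\tilde B$ inherited from (A1) is exactly what ensures that each shifted piece is still a genuine solution of the time-dependent equation \eqref{eq:controlsystem} on its interval. Gluing yields $\Psi\in C([0,\infty);\tE)$ (continuity at the joints holds by construction since $\Psi(m_kT)=\Xi_k$ from both sides) and $\dot h\in\L^2_{\mathrm{loc}}([0,\infty);\R^M)$, with $\Psi(0)=\Phi_0$ and $\Psi(m_kT)\in B_{1/k}(\Phi^*)$ for every $k$, so $\Phi^*\in\overline{\Psi(\N T)}$ as required.

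The main obstacle I expect is the ``converse'' step underlying the ``only if'' direction, namely extracting a single control path with prescribed endpoint from positivity of a marginal probability. It rests on pushing the path-space Support Theorem forward under the continuous evaluation at time $n_kT$, which is routine but needs to be stated carefully, together with verifying that control paths actually stay in $\tE$ (which uses (A3) as in \cite[page 533]{HLT3}). All remaining ingredients --- smoothness of the coefficients, reduction of the time-periodic drift to a time-homogeneous one, and concatenation of $\L^2$ controls --- are standard.
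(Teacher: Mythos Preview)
Your proof is correct and follows the same route as the paper, which simply cites the Support Theorem (in the form of \cite[Theorem 3.1]{HLT3}) together with $T$-periodicity of $S_0$ as making the lemma ``immediate''. Your explicit concatenation argument for the ``only if'' direction spells out precisely how $T$-periodicity is used to glue finite-horizon control paths into a single global one, which the paper leaves to the reader.
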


\begin{proof}
This is an immediate consequence of the Support Theorem and the fact that $S_0$ is $T$-periodic. A variant of the Support Theorem that applies to diffusions on state spaces with the property (A3) is given in \cite[Theorem 3.1]{HLT3}.
\end{proof}

\begin{remark}\label{rem:control}
	Note that if we write $\Psi=(u,v,w)$, the control system \eqref{eq:controlsystem} in differential notation becomes 
	\begin{align}
		\begin{split}\label{eq:controlproof}
			\dot u &= f(u,v) + S_0+\tilde b(w) +\sigma(w)\dot h, \\
			\dot v &= g(u,v), \\
			\dot w &= S_0+\tilde b(w) +\sigma(w)\dot h.
		\end{split}
	\end{align}
	The last equation is still autonomous and $w$ can be forced to any trajectory $\rho\in C^1\big( [0,\infty);\tW\big)$ with $\rho(0)=w(0)$. Indeed, if we choose 
	\begin{equation}\label{eq:hsimple}
	\dot h := \sigma^{-1}(\rho)\left( \dot \rho - S_0 - \tilde b(\rho)\right) \in C\big([0,\infty);\R^M\big)\subset\L^2_{\mathrm{loc}}\big([0,\infty);\R^M\big),
	\end{equation}
	then $w=\rho$ solves the third equation of \eqref{eq:controlproof}. The equations for $u$ and $v$ on the other hand can then be rewritten as
	\[
		\dot\psi(t)=F_{\dot\rho}(t,\psi(t)) \quad \text{with $\psi:=(u,v)$,}
	\]
	and therefore $\psi=\phi[(u(0),v(0)),\dot \rho]$. In other words, a suitable choice of $\dot h$ allows us to view $\psi$ as a solution to \eqref{eq:ODEphi} with a given continuous signal $S=\dot\rho$, while $w=\rho$ is its anti-derivative.
\end{remark}

\begin{remark}\label{rem:chameleon}
As a consequence of Remark \ref{rem:control} and the Support Theorem, the first two components $(X_t,Y_t)$ of the solution $\Phi_t$ of the stochastic system \eqref{eq:SDEPhi} with the \emph{fixed} periodic signal $S_0$ can (up until some time $t_0\in(0,\infty)$) approximately imitate the behavior of the solution of the deterministic system \eqref{eq:ODEphi} with \emph{any} (not necessarily periodic) signal $S\in C\big([0,\infty);\R^N\big)$ satisfying
\[
	\tW_S:=\left\{z\in\tW\,\middle|\,z+\int_0^tS(s)ds\in\tW \text{ for all $t\in[0,t_0]$}\right\}\neq\emptyset.
\]
Indeed, if $\phi_0=(x_0,y_0)\in\tU\times\tV$ is such that $\phi[(x_0,y_0),S](t)=(x,y)(t)$ exists for all $t\in[0,t_0]$, and if $\Phi_0=(x_0,y_0,z_0)$ with $z_0\in \tW_S$, \cite[Theorem 3.1]{HLT3} yields that
\[
	\P\Big(\abs{(X_t,Y_t)-(x,y)(t)}<\eps \text{ for all $t\in[0,t_0]$}\Big)>0 \quad \text{for all $\eps>0$.}
\]
Note that for $\tW=\R^N$ we have $\tW_S=\R^N$, regardless of the choice of the signal.
\end{remark}

\begin{remark}\label{rem:controlapproach}
The observation in Remark \ref{rem:control} inspires the following basic scheme for acquiring a $T$-attainable point: 
\begin{enumerate}
	\item Identify an equilibrium or a $T^*$-periodic orbit (with $T^*\notin\Q T$) of the deterministic system \eqref{eq:ODEphi} with $S\equiv0$ to which its solution is locally attracted.
	\item Define $\dot h$ in such a way that the resulting input $\dot w$ steers $\psi=(u,v)$ into the domain of attraction of this orbit. Meanwhile, $w$ must be guided to a given $z^*\in\interior(\tW)$.
	\item Let $w$ rest at $z^*$ such that from that time on $\dot w\equiv 0$ and $\psi$ converges to the attractive periodic orbit.
\end{enumerate}
This way we establish $(\phi^*,z^*)$ as a $T$-attainable point whenever $\phi^*$ lies on the orbit. The choice of $z^*\in\interior(\tW)$ turns out to be completely arbitrary for our technique and our setting (see Theorems \ref{thm:controlsimple} and \ref{thm:control} below).
\end{remark}

We will apply the strategy from Remark \ref{rem:controlapproach} in different scenarios: We start with a rather simple setting in Theorem \ref{thm:controlsimple}, where we can illustrate the general idea in a context that allows to minimize the technical aspects. The main result of this section is then presented in Theorem \ref{thm:control}, which treats a different, more nuanced and localized setting that is inspired by the Hodgkin--Huxley model. Finally, Theorem \ref{thm:controlperiodic} demonstrates how our technique can be adapted to periodic orbits of \eqref{eq:ODEphi} that are induced by periodic signals $S=S_*\not\equiv 0$. All of these theorems have in common that they do not rely on ad hoc arguments exploiting properties of the $N+L+N$-dimensional control system \eqref{eq:controlsystem} itself, but work entirely under assumptions only on the $N+L$-dimensional deterministic system \eqref{eq:ODEphi} (except for condition (C1) above).

\begin{remark}\label{rem:constantinput}
We want to make two remarks on the scope of the strategy from Remark \ref{rem:controlapproach} beyond what is explicitly dealt with in the sequel.

1.) If $c\in\R^N$, replacing $f$ with $f-c$ and $S=S_0$ with $S_0+c$ leaves the equation \eqref{eq:ODE} entirely unaltered. In \eqref{eq:SDE} this change influences only the external equation for $Z$. Since the $Z$-variable has no intrinsic meaning, one can always apply this transformation without fundamentally changing the setup. In this sense, working with properties of the system with zero-input (instead of general constant input) is hardly any restriction for practical purposes.

2.) In principle, it is possible that there is a solution to \eqref{eq:ODEphi} such that all of its components are periodic, but their individual periodicities are not the same. If they are linearly independent over $\Q$, an extension of the arguments from Section \ref{sect:lemmas} may allow to use the same strategy anyway.
\end{remark}

\subsection{Globally attractive periodic orbit under zero-input}

We want to use the approach outlined in Remark \ref{rem:controlapproach} in a relatively simple setting where the orbit is in fact globally attractive. Before we give the exact conditions, let us stress that we can handle both periodic orbits and single equilibrium points, which are included in the following as orbits with period 0.
\begin{enumerate}
	\item[\textbf{(C2)}] \textbf{Incommensurable periodic orbit:} For some $T^*\in [0,\infty)$ with $T^*=0$ or $T^* \notin \Q T$, the system \eqref{eq:ODEphi} with $S\equiv0$ has a $T^*$-periodic orbit $\cO^*\subset\interior(\tU\times\tV)$, i.e. for all $\phi_0\in\cO^*$ we have
	\[
	\phi[\phi_0,S\equiv0]([0,\infty))=\cO^*, \quad \text{and} \quad T^*=\inf\{t\in(0,\infty)\,|\,\phi[\phi_0,S\equiv0](t)=\phi_0\}.
	\]
	\item[\textbf{(C3)}] \textbf{Global attractivity of $\cO^*$:} For any $\phi_0 \in \tU\times\tV$, there is a unique global solution to \eqref{eq:ODEphi} with $S\equiv0$, and it satisfies
	\[
	\dist{\phi[\phi_0,S\equiv0](t)}{\cO^*}\xrightarrow{t\to\infty} 0.
	\]
	\item[\textbf{(C4)}] \textbf{Non-explosiveness for moderate input:} For any $\phi_0 \in \tU\times\tV$, there is a $\delta_0=\delta_0(\phi_0)>0$ such that for any $S\in C^\infty_b\big([0,\infty);\R^N\big)$ with $\norm{S}_\infty \le \delta_0$, the system \eqref{eq:ODEphi} has a unique global solution.
\end{enumerate}
We then have the following result.

\begin{thm}\label{thm:controlsimple}
Assume that (C1) - (C4) hold. Then for any $\phi^*\in\cO^*$ and $z^*\in\interior(\tW)$, the point $\Phi^*=(\phi^*,z^*)\in\interior(\tE)$ is $T$-attainable.
\end{thm}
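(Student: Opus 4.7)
My plan is to invoke Lemma \ref{lem:attainthesame} and construct, for every starting point $\Phi_0 = (\phi_0, z_0) \in \tE$, a control path $\Psi = (\psi, w)$ whose $\N T$-trace accumulates at $\Phi^* = (\phi^*, z^*)$. Following Remark \ref{rem:control}, this reduces to specifying a smooth curve $\rho \in C^1([0,\infty); \tW)$ with $\rho(0) = z_0$; formula \eqref{eq:hsimple} then automatically produces a control $\dot h$ for which $w = \rho$ and $\psi = \phi[\phi_0, \dot \rho]$, so only the $(N+L)$-dimensional system \eqref{eq:ODEphi} with signal $\dot \rho$ needs further analysis.

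The construction proceeds in three phases. First, using convexity of $\tW$ and $z^* \in \interior(\tW)$, I would pick $\delta_0 = \delta_0(\phi_0)$ from (C4), choose $t_1$ large enough that $\abs{z^* - z_0}/t_1 < \delta_0$, and smoothly interpolate inside $\tW$ from $z_0$ to $z^*$ on $[0, t_1]$ via a convex combination with $\norm{\dot\rho}_\infty \leq \delta_0$, extending $\rho \equiv z^*$ on $[t_1, \infty)$ (a standard bump function smooths the joint); (C4) then grants global existence of $\psi$. Second, on $[t_1, \infty)$ the input $\dot\rho$ vanishes, so $\psi$ obeys the zero-input ODE, (C3) yields $\dist{\psi(t)}{\cO^*} \to 0$ as $t \to \infty$, and uniqueness in (C3) ensures continuous dependence of the zero-input flow $\phi[\cdot, 0](s)$ on its initial condition. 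Third, I need to hit $\phi^*$ along the $T$-grid, and the case $T^* = 0$ is immediate since then $\cO^* = \{\phi^*\}$ and $\psi(nT) \to \phi^*$.

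The substantive case is $T^* > 0$ with $T^* \notin \Q T$, where I would argue via an $\omega$-limit-set reduction. Let $L$ denote the set of accumulation points of $(\psi(nT))_{n\in\N}$; Phase 2 gives $L \subseteq \cO^*$, and for $nT \geq t_1$ the identity $\psi((n+1)T) = \phi[\psi(nT), 0](T)$ combined with continuity of the flow shows that $L$ is invariant under the time-$T$ map $\Theta := \phi[\cdot, 0](T)$. Parametrizing $\cO^*$ as a simple closed curve by a $T^*$-periodic orbit (simple by uniqueness of solutions to the zero-input ODE together with the minimality of $T^*$), $\Theta|_{\cO^*}$ becomes the rotation by $T$ modulo $T^*$ on a topological circle. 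Irrationality of $T/T^*$ and the classical Kronecker--Weyl theorem then imply that no proper non-empty closed $\Theta$-invariant subset of $\cO^*$ exists, forcing $L = \cO^*$. In particular $\phi^* \in L$, so $\psi(n_k T) \to \phi^*$ along some subsequence, and since $w(n_k T) = z^*$ for all large $k$, this gives $(\phi^*, z^*) \in \overline{\Psi(\N T)}$.

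The step I expect to be the main obstacle is this last one: the distance convergence $\dist{\psi(t)}{\cO^*} \to 0$ alone provides no phase-locking of the discrete samples $\psi(nT)$ to any fixed parametrization of $\cO^*$, which is why I resort to the $\omega$-limit-set argument rather than trying to define an asymptotic phase. The ingredients it requires, namely continuous dependence on initial conditions (from the smoothness in (A2)) and the fact that $\cO^*$ is a simple closed curve, are both easily verified under the standing assumptions, so the remaining work is essentially a careful bookkeeping of the interpolation in Phase 1 and the invocation of Kronecker--Weyl on $\cO^*$.
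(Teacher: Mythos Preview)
Your proposal is correct and mirrors the paper's construction of the control path via Remark~\ref{rem:control}: both build a smooth $\rho$ that interpolates from $z_0$ to $z^*$ with $\lVert\dot\rho\rVert_\infty \le \delta_0$ and then lets $\rho$ rest at $z^*$, so that $\psi$ follows the zero-input flow thereafter. The only substantive difference is in how you establish $\phi^* \in \overline{\psi(\N T)}$. The paper packages this into Lemma~\ref{lem:period}, which passes to a transverse Poincar\'e section at $\phi^*$, shows that successive return times satisfy $T^*_n - T^*_{n-1} \to T^*$, and then combines a quantitative Kronecker approximation (Lemma~\ref{lem:Kronecker}) with a Lipschitz bound on $\psi$ near $\cO^*$. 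Your $\omega$-limit-set argument is more conceptual: the accumulation set $L$ of $(\psi(nT))_n$ is a non-empty closed subset of $\cO^*$ that is forward-invariant under the time-$T$ map, and minimality of the irrational rotation on $\cO^*$ forces $L = \cO^*$. This route sidesteps the return-time asymptotics and Lipschitz estimates entirely; the paper's route, in return, is more self-contained (it does not presuppose minimality of irrational circle rotations) and its auxiliary Lemma~\ref{lem:Kronecker} is stated in a form reusable for the related Theorem~\ref{thm:controlperiodic}, where no invariant circle with a clean rotation structure is immediately available.
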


\begin{proof}
	Let $\Phi_0=(\phi_0,z_0)\in \tE$ and write $\Psi=(\psi,w)$ as in Remark \ref{rem:control}. Since $\tW$ is convex, we can use linear interpolation and a mollifier in order to construct a smooth function $\rho\in C^\infty_b\big( [0,\infty);\tW\big)$ with $\rho(0)=z_0$ and $\norm{\dot \rho}_\infty\le\delta_0$ such that $\rho\equiv z^*$ on $[kT,\infty)$ for some $k=k(z_0,\delta_0)\in\N$. Defining $\dot h$ as in \eqref{eq:hsimple}, we can force $w$ onto $\rho$ and then $\psi=(u,v)$ has to be the solution $\phi[\phi_0,\dot \rho]$ to \eqref{eq:ODEphi}, which uniquely exists thanks to (C4). From time $kT$ on, the pertaining signal $\dot \rho$ rests at zero, so we have
	\begin{equation}\label{eq:controlsimpleproof}
			\psi(kT+t)=\phi[\psi(kT),S\equiv0](t) \quad \text{for all $t\in[0,\infty)$,}
	\end{equation}
	and hence Lemma \ref{lem:period} below will imply that $\phi^*\in \overline{\psi(\N T)}$. Since $w\equiv z^*$ on $[kT,\infty)$, this means that $\Phi^*=(\phi^*,z^*)$ is in $\overline{\Psi(\N T)}$. Consequently, $\Phi^*$ is $T$-attainable by Lemma \ref{lem:attainthesame}.
\end{proof}

\begin{remark}\label{rem:notallphi0}
In general, it is possible that (C2) and (C4) are fulfilled and the global attractivity (C3) is violated in only one point $\tilde \phi_0\in\tU\times\tV$ that is an unstable equilibrium of the system \eqref{eq:ODEphi} with $S\equiv0$ (see Example \ref{ex:TOYcontrolglobal} below). This leads to a problem in the proof of Theorem \ref{thm:controlsimple}, since we might be very unlucky and $\psi(kT)=\tilde \phi_0$. If there are $m\in\N$ and $S\in C_b^\infty\big([0,mT];\R^N\big)$ with %such that $\phi[\tilde \phi_0,S]$ uniquely exists and
\[
	\int_0^{mT}S(t)dt=0 \quad \text{and} \quad \phi[\tilde \phi_0,S](mT)\neq\tilde \phi_0,
\]
this problem can be solved by replacing $\rho$ with
\[
	\tilde \rho(t) = \begin{cases} \rho(t), & \text{if $t\in[0,\infty)\setminus [kT,(k+m)T]$,}\\ z^*+\int_0^tS(s)ds, & \text{if $t\in[kT,(k+m)T]$}, \end{cases}
\]
Then, \eqref{eq:controlsimpleproof} holds with $k$ replaced with $\tilde k = k+m$ and the rest of the proof can be finished with no further adjustments.
\end{remark}

\hspace{-6mm}\begin{minipage}{0.5\textwidth}
	\begin{example}\label{ex:TOYcontrolglobal}
		Choose $\tU=\tV=\R$ and
		%\[
		%f(x,y)=x-y-x(x^2+y^2)
		%\]
		%and
		%\[
		%g(x,y)=x+y-y(x^2+y^2)
		%\]
		\begin{align*}
		f(x,y)&=x-y-x(x^2+y^2), \\
		g(x,y)&=x+y-y(x^2+y^2)
		\end{align*}
		for all $x,y\in\R$. We see that
		\[
			[0,\infty)\ni t \mapsto \phi[(1,0),S\equiv0](t) = \begin{pmatrix} \cos t \\ \sin t \end{pmatrix}
		\]
		establishes the $2\pi$-periodic orbit $\cO^*=\mathbb S^1$
		%\[
		%	\cO^*=\mathbb S^1=\{\phi\in\R^2\,|\, \abs{\phi}=1\}
		%\]
		for the corresponding system \eqref{eq:ODEphi} with $S\equiv0$. Looking at the phase diagram in Figure \ref{fig:phasen}, it is evident that the only initial condition violating (C3) is the unstable equilibrium in $\tilde \phi_0=(0,0)$, but that this can be resolved by using Remark \ref{rem:notallphi0}
		\[
		S(t) = \cos\frac{2\pi t}{mT} \quad \text{for all $t\in[0,mT]$}
		\]
		with some $m\in\N$. Obviously, (C4) is unproblematic.
	\end{example}
\end{minipage}
\hspace{6mm}\begin{minipage}{0.4\textwidth}
	\includegraphics[width=\textwidth]{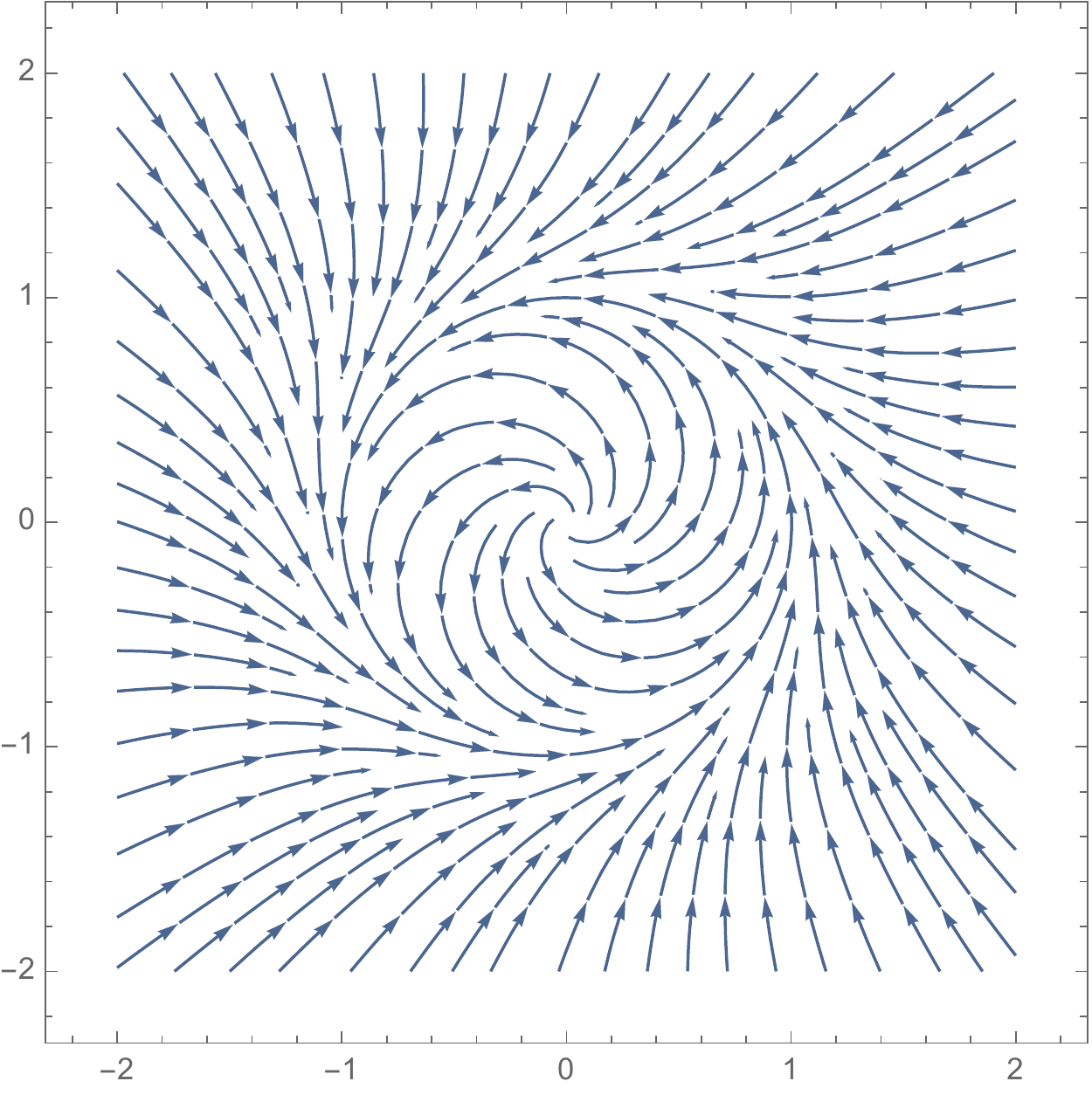}
	\captionof{figure}{}
	\label{fig:phasen}
\end{minipage}

\begin{example}\label{ex:ROTORcontrol}
	In the oscillator model from Example \ref{ex:rotors}, we may think of a situation in which the interaction potentials are bounded, while the pinning potentials pull the oscillators to the centre with a force that grows with each rotation (for example $u_1=u_2=u_3=-\phi(\abs{\,\cdot\,})$ for some non-negative and non-decreasing function $\phi$ with $\phi(0)=0$ and with a sufficiently steep slope). We do not give a rigorous proof, but it is intuitively clear that such a system will satisfy (C2) - (C4) with $\cO^*=\{0\}$, $T^*=0$.
\end{example}

\subsection{Locally attractive periodic orbit under zero-input}

The main difficulty in the technique from Remark \ref{rem:controlapproach} is that the two tasks described in the second step may obviously compete with each other. In Theorem \ref{thm:controlsimple}, we avoided this problem by simply assuming that the domain of attraction of the orbit is the entire space. In order to adapt our method to the case where the orbit is in fact only locally attractive, we subdivide this step into two steps:
\begin{enumerate}
	\item[2a.] Define $\dot h$ in such a way that the resulting input $\dot w$ steers $\psi=(u,v)$ into the domain of attraction of this orbit. 
	\item[2b.] Move $w$ to a given $z^*\in\interior(\tW)$ "gently enough" such that in the meantime $\psi$ cannot leave the domain of attraction.
\end{enumerate}
In order to realize Step 2a, we make some further assumptions on the interplay of $x$ and $y$, which are inspired by the behaviour of the Hodgkin--Huxley model. For Step 2b, we have to make sure that the system is not too sensitive with respect to slight changes of the input. These ideas are made precise in the following set of assumptions. In order to grasp the essential idea, it may be helpful on first reading to think of the case $\cO^*=\{(x^*,y^*)\}$, i.e.\ the periodic orbit is in fact just a single equilibrium point.

\begin{enumerate}
	\item[\textbf{(C3')}] \textbf{Local attractivity of $\cO^*$:} There is some $\eps^* >0$ such that for any initial value $\phi_0 \in B_{\eps^*}(\cO^*)$ there is a unique global solution to \eqref{eq:ODEphi} with $S\equiv0$, and it satisfies
	\[
	\dist{\phi[\phi_0,S\equiv0](t)}{\cO^*}\xrightarrow{t\to\infty} 0.
	\]
	\item[\textbf{(C4')}] \textbf{Signal-dependent stability of $\phi^*$:} For all $\eps>0$, there is some $\delta(\eps)\in(0,\eps]$ such that for all $S\in C^\infty_b\big([0,\infty);\R^N\big)$ with $\norm{S}_\infty < \delta(\eps)$ and all $\phi_0 \in B_{\delta(\eps)}(\cO^*)$ there is a unique global solution to \eqref{eq:ODEphi}, and it satisfies
	\[
	\phi[\phi_0,S](t) \in B_\eps(\cO^*) \quad \text{for all $t\in[0,\infty)$.}
	\]
	\item[\textbf{(C5)}] \textbf{Global attractivity in $y$ for equilibrious $x$:} There is a continuous function
	\[
		x^*\colon [0,\infty)\to \{x\in\tU\,|\,(x,y)\in\cO^* \text{ for some $y\in\tV$}\}
	\]
	such that for all $y_0\in \tV$ the initial value problem
	\[
	\dot y(t) = g(x^*(t),y(t)), \quad y(0)=y_0,
	\]
	has a unique solution $y[y_0,x^*]\colon[0,\infty)\to \tV$, and it satisfies
	\[
	\dist{(x^*(t),y[y_0,x^*](t))}{\cO^*} \xrightarrow{t\to\infty} 0.
	\]
	\item[\textbf{(C6)}] \textbf{Relocation to $\cO^*$ in $x$:} For all $\phi_0=(x_0,y_0) \in \tU \times \tV$, there are $t_1\in(0,\infty)$ and a function $\gamma\in C^1\big([0,\infty);\tU\big)$ with $\gamma(0)=x_0$ and $\gamma(t)= x^*(t)$ for all $t\in[t_1,\infty)$ (with $x^*$ from (C5)) such that the differential equation
	\begin{equation}\label{eq:tilde}
	\dot y(t) = g(\gamma(t),y(t)), \quad y(0)=y_0,
	\end{equation}
	has a unique local solution $\tilde y[y_0,\gamma]\colon[0,t_1]\to \tV$.
\end{enumerate}

\begin{remark}
Assumption (C4') is a stability condition that takes into account not only the starting point but also the signal: If we want the solution of \eqref{eq:ODEphi} to stay close to $\cO^*$, we can achieve this by starting not too far away from it, even if we slightly vary the input. If the input is in fact constantly zero, hypotheses (C3') and (C5) signify that $\cO^*$ is locally attractive, and its attraction is even global for $y$ if $x$ is already on a suitable trajectory on the projection of $\cO^*$ to $\tU$. Condition (C6) makes sure that the $x$-variable can always be relocated to such a trajectory, while the corresponding equation for $y$ still admits a local solution up to the time the $x$-variable hits $x^*$. Due to (C5), this solution can immediately be extended to a global solution. Thus, (C6) can equivalently be replaced by
\begin{enumerate}
	\item[\textbf{(C6')}] \textbf{Relocation to $\cO^*$ in $x$:} For all $\phi_0=(x_0,y_0) \in \tU \times \tV$, there are $t_1\in(0,\infty)$ and a function $S\in C\big([0,\infty);\R^N\big)$ such that there is a unique solution $(x,y)=\phi[\phi_0,S]\colon[0,\infty)\to\tU\times\tV$ to \eqref{eq:ODE}, and it satisfies $x= x^*$ on $[t_1,\infty)$.
\end{enumerate}
If (C6') is satisfied, we can simply take $\gamma$ as the corresponding trajectory of $x$ and acquire (C6). If on the other hand (C6) holds, we can deduce (C6') by extending $y$ to a global solution with (C5) and then taking $S=\dot\gamma-F(\gamma,y)$. While (C6') may be more intuitive, as it enhances the idea that one has to adjust $x$ by giving the right input, (C6) is of a form that is usually easier to verify.
\end{remark}

The set of conditions is now complete, and we can state the main result of this section.

\begin{thm}\label{thm:control}
	Let (C1), (C2), (C3'), (C4'), (C5), and (C6) hold, and let $\tW=\R^N$. Then for any $\phi^*\in\cO^*$ and $z^*\in\R^N$ the point $\Phi^*=(\phi^*,z^*)\in\interior(\tE)$ is $T$-attainable.
\end{thm}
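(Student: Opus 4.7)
The plan is to construct a control path $\Psi=(\psi,w)$ as in Remark~\ref{rem:control} and then invoke Lemma~\ref{lem:attainthesame}: it will suffice to build a $C^1$-function $\rho\colon[0,\infty)\to\R^N$ with $\rho(0)=z_0$ such that the solution $\psi=\phi[(x_0,y_0),\dot\rho]$ of \eqref{eq:ODEphi} with signal $\dot\rho$ exists globally, satisfies $\phi^*\in\overline{\psi(\N T)}$, and such that $\rho\equiv z^*$ from some time on. Following the three-step refinement discussed at the start of this subsection, I would split the construction into three phases on intervals $[0,T_1]$, $[T_1,T_2]$, $[T_2,\infty)$. In \emph{Phase~A (substep~2a)} I invoke (C6) to obtain $t_1>0$ and $\gamma\in C^1([0,\infty);\tU)$ with $\gamma(0)=x_0$ and $\gamma\equiv x^*$ on $[t_1,\infty)$, together with the local solution $\tilde y[y_0,\gamma]$ on $[0,t_1]$. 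Since $\gamma(t)=x^*(t)$ for $t\ge t_1$, the equation $\dot y=g(x^*(t),y(t))$ with $y(t_1)=\tilde y[y_0,\gamma](t_1)$ is a time-shifted version of the IVP in (C5); observing that every time-shift of $x^*$ is again a parametrisation of $\cO^*$, (C5) yields a global extension $y\colon[0,\infty)\to\tV$ with $\dist{(\gamma(t),y(t))}{\cO^*}\to 0$. Setting $\dot\rho:=\dot\gamma-f(\gamma,y)$ on $[0,T_1]$ and integrating from $z_0$ then gives $\rho\in C^1$ on this interval, with $\psi=(\gamma,y)$ by construction.

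Let $\delta:=\delta(\eps^*)$ be the constant provided by (C4') for $\eps=\eps^*$ from (C3'). In \emph{Phase~B (substep~2b)} I choose $T_1\ge t_1$ large enough that $\psi(T_1)\in B_\delta(\cO^*)$, and then, since $\tW=\R^N$, connect $\rho(T_1)$ to $z^*$ by a smooth interpolation on $[T_1,T_2]$ satisfying $\|\dot\rho\|_{\infty,[T_1,T_2]}<\delta$ together with $C^1$-matching at both endpoints; enlarging $T_2-T_1$ as needed makes all of this compatible. Condition (C4') then guarantees $\psi(t)\in B_{\eps^*}(\cO^*)$ for every $t\in[T_1,T_2]$. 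In \emph{Phase~C} I simply set $\rho\equiv z^*$ on $[T_2,\infty)$, so that the signal $\dot\rho$ vanishes there; (C3') yields a global solution with $\dist{\psi(t)}{\cO^*}\to 0$. As in the proof of Theorem~\ref{thm:controlsimple}, Lemma~\ref{lem:period}, which exploits $T^*\notin\Q T$ from (C2), then yields $\phi^*\in\overline{\psi(\N T)}$. Since $w(kT)=z^*$ for all $k\in\N$ with $kT\ge T_2$, this gives $(\phi^*,z^*)\in\overline{\Psi(\N T)}$, and $T$-attainability follows from Lemma~\ref{lem:attainthesame}.

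The most delicate point will be the bookkeeping at the junction between (C5) and (C6) at time $t_1$: (C5) is stated for an IVP with initial time $0$, whereas Phase~A requires the continuation of $\tilde y[y_0,\gamma]$ past $t_1$, which is really a time-shifted IVP. The resolution rests on the observation that every time-shift of $x^*$ is another admissible parametrisation of the orbit $\cO^*$, so that the global existence and attractivity in (C5) transfer to the shifted problem; this is the step requiring the most care. The $C^1$-matching of $\rho$ at $T_1$ is harmless, since by continuity of $f$ and the convergence $\dist{(x^*(t),y(t))}{\cO^*}\to 0$ the derivative $\dot\rho(T_1^-)=\dot x^*(T_1)-f(x^*(T_1),y(T_1))$ can be made arbitrarily small by enlarging $T_1$. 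Otherwise the argument relies only on the linearity of $\tW=\R^N$ (for the interpolation in Phase~B) and on the same combination of (C3'), (C4') and Lemma~\ref{lem:period} already exploited in the proof of Theorem~\ref{thm:controlsimple}.
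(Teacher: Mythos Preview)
Your proposal follows essentially the same four-phase route as the paper: use (C6) then (C5) to drive $\psi$ into $B_{\delta(\eps^*)}(\cO^*)$, use (C4') to move $w$ slowly to $z^*$, and finish with (C3') and Lemma~\ref{lem:period}. The paper's Phases~1--4 correspond exactly to your Phases~A--C (with the paper's Phase~2 being the tail of your Phase~A), and the time-shift subtlety you flag for (C5) at $t_1$ is glossed over in the paper just as you resolve it only in spirit.

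The one point where you make life harder for yourself than the paper does is the insistence on a globally $C^1$ function $\rho$. Your justification that $\dot\rho(T_1^-)=\dot x^*(T_1)-f(x^*(T_1),y(T_1))$ can be made small is not fully established: convergence $\dist{(x^*(t),y(t))}{\cO^*}\to 0$ and continuity of $f$ alone do not force $\dot x^*(T_1)$ to approximate $f$ at a nearby orbit point unless $x^*$ is itself the $x$-component of a trajectory on $\cO^*$ (which (C5) does not require). The paper sidesteps this entirely: since Lemma~\ref{lem:attainthesame} only needs $\dot h\in L^2_{\mathrm{loc}}$, the paper allows $w$ to have a kink at the transition time $t_2$ and takes $\rho\in C^\infty_b([t_2,\infty))$ fresh from that point with $\|\dot\rho\|_\infty<\delta(\eps^*)$, without any matching of derivatives. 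Dropping your $C^1$-matching requirement at $T_1$ in the same way removes the difficulty and makes your argument go through cleanly.
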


\begin{proof} Let $\Phi_0=(x_0,y_0,z_0)\in \tE$ and write $\Psi=(u,v,w)$ as in \eqref{eq:controlproof}. As indicated above, the construction of a suitable control path is much more delicate than in Theorem \ref{thm:controlsimple}. Therefore, we divide its construction into several steps.

	\textit{Phase 1:} At first, we want to choose $\dot h$ in such a way that $u$ is forced to the trajectory $\gamma\colon[0,\infty)\to \tU$ from (C6). Therefore, we take a function $(u_\sharp,v_\sharp,w_\sharp)$ with
	\[
		u_\sharp = \gamma, \quad \dot v_\sharp = g(\gamma,v_\sharp), \quad \dot w_\sharp = \dot\gamma -f(\gamma,v_\sharp),
	\]
%	\begin{align*}
%		\begin{split}
%			u_\sharp &= \gamma, \\
%			\dot v_\sharp &= G(\gamma,v_\sharp), \\
%			\dot w_\sharp &= \dot\gamma -F(\gamma,v_\sharp),
%		\end{split}
%	\end{align*}
	and $v_\sharp(0)=y_0, w_\sharp(0)=z_0$. Here, the first equation indeed just prescribes $u_\sharp$ to coincide with $\gamma$, while (C6) secures that the second equation admits a solution in $[0,t_1]$, and the last equation is then simply solved by integrating, as the right hand side does not contain the variable $w_\sharp$. Rearranging the first equation of the control system \eqref{eq:controlproof} for $u$, we see that if we set
	\begin{align}\label{eq:hcontrol}
		\begin{split}
			\dot h :=& \sigma^{-1}(w_\sharp)\left( \dot w_\sharp - S_0 - \tilde b(w_\sharp)\right) \\
			=& \sigma^{-1}(w_\sharp)\left( \dot u_\sharp-  f(u_\sharp,v_\sharp) - S_0 - \tilde b(w_\sharp)\right) \in C\big([0,\infty);\R^M\big),
		\end{split}
	\end{align}
	the function $(u,v,w):=(u_\sharp,v_\sharp,w_\sharp)$ satisfies \eqref{eq:controlproof} for all $t\in[0,t_1]$. Thus we have constructed a control path up until the time $t_1$, from which on the first variable $u$ follows the curve $x^*$.
	
	\textit{Phase 2:} Since $u_\sharp=\gamma=x^*$ after time $t_1$, the stability condition (C5) implies that $v_\sharp$ can be extended to a global solution with
	\[
	\dist{(u_\sharp(t),v_\sharp(t))}{\cO^*}\xrightarrow{t\to\infty}0,
	\]
	which allows us to set
	\[
	t_2:=\inf\left\{t>t_1\,\middle|\, (u_\sharp(t),v_\sharp(t)) \in B_{\frac12\delta(\eps^*)}(\cO^*) \right\} \in [t_1,\infty)
	\]
	with $\eps^*$ from (C3') and $\delta(\eps^*)$ chosen according to (C4'). Thus, by the time $t_2$ we have driven $(u_\sharp,v_\sharp)$ into $B_{\delta(\eps^*)}(\cO^*)$.
	
	\textit{Phase 3:} Up to time $t_2$, we have constructed a useful candidate for $\dot h$ and the corresponding solution $(u,v,w)=(u_\sharp,v_\sharp,w_\sharp)$ to \eqref{eq:controlproof}. Our next step will be to change the definition \eqref{eq:hcontrol} of $\dot h$ beyond time $t_2$ and extend this solution accordingly. Our aim is to drive $w$ to $z^*$ while keeping $(u,v)$ close to $\cO^*$. Since $\dot w$ can be thought of as an alternate signal that is fed into the equation, assumption (C4') makes sure that we can achieve this, as long as we move $w$ sufficiently slowly. Linear interpolation and classical mollification techniques allow us to take some $\rho\in C^\infty_b\big([t_2,\infty);\R^N\big)$ with $\rho(t_2)=w_\sharp(t_2)$ and $\norm{\dot\rho}_\infty<\delta(\eps^*)$ such that for some $k\in\N$ with $kT>t_2$ we have $\rho\equiv z^*$ on $[kT,\infty)$. On $[t_2,\infty)$, we change the definition of $\dot h$ to
	\begin{equation}\label{eq:hcontrol2}
	\dot h := \sigma^{-1}(\rho)\left( \dot\rho - S_0 -\tilde b(\rho)\right) \in C\big([t_2,\infty);\R^M\big).
	\end{equation}
	With this choice, we can obviously extend $w$ by taking it equal to $\rho$ beyond time $t_2$, and thus $\psi=(u,v)$ has to obey
	\begin{equation}\label{eq:phase3}
		\dot \psi(t) = F_{\dot \rho}(t,\psi(t)) \quad \text{for all $t\in[t_2,\infty).$}
	\end{equation}
	Since $\dot\rho$ now plays the role of a signal whose absolute value is bounded by $\delta(\eps^*)$, assumption (C4') yields that we can indeed extend $\psi$ correspondingly and $\psi(kT)$ will still be in $B_{\eps^*}(\cO^*)$. Furthermore, $w\equiv z^*$ and $\dot w\equiv0$ on $[kT,\infty)$.
		
	\textit{Phase 4:} After time $kT$, the equation \eqref{eq:phase3} turns into $\dot \psi= F(\psi)$ and therefore
	\[
		\psi(kT+t)=\phi[\psi(kT),S\equiv0](t) \quad \text{for all $t\in[0,\infty)$.}
	\]
	Since $\psi(kT) \in B_{\eps^*}(\cO^*)$, we can apply Lemma \ref{lem:period} and the proof is completed in the same way as the proof of Theorem \ref{thm:controlsimple}.
\end{proof}

\begin{remark}
	The route we take in this proof is inspired by the one used in the proof of \cite[Proposition 2.5]{HLT3}, where the authors treat the stochastic Hodgkin--Huxley system. They do not take into account general periodic orbits but only a constant equilibrium. For the special case in which the autonomous equation for $Z$ is of one-dimensional Cox-Ingersoll-Ross type, they can in fact abandon the need for $\sigma$ to be well-defined on the entire space $\R$. This would make the first two phases of our proof more complicated, as one has to make sure that $w$ does not leave the state space by turning negative (this is why we assume $\tW=\R^N$). The case that $Z$ is of Ornstein--Uhlenbeck type -- of which our model is a generalization -- is treated mostly in the same way as here, albeit only for the dimension $M=N=1$ and for constant diffusion coefficient. However, there is one key difference in the third phase of the control (which corresponds to "Part (V)" of the proof of \cite[Proposition 2.5]{HLT3}): In the mentioned article, the control is defined such that $u$ is forced to a trajectory that moves it very slightly away from the equilibrium point $(x^*,y^*)$. If this is done slowly enough, $v$ is assumed to stay close to $y^*$, and $w$ approaches $z^*$ similarly as in our third phase. No rigorous prove is given that the deterministic Hodgkin--Huxley system actually shows this kind of behaviour. We think that in general it is a more intuitive approach to force not $u$ but $w$ to a specific trajectory, since we can interpret this as feeding a suitable signal into \eqref{eq:ODE} that does not let $(x,y)$ escape from the domain of attraction of the equilibrium. Condition (C4') is the key to making this possible. Note though that for the Hodgkin--Huxley model, we still rely on numerical simulations and merely give a rough intuitive argument that (C4') holds (see Example \ref{ex:HHcontrol} below). However, for a simpler example, the conditions of Theorem \ref{thm:control} can be checked rigorously (for a detailed discussion, we refer the reader to \cite[Example 2.29]{ICHDiss}).
\end{remark}

In the following examples, we want to discuss properties of certain systems like \eqref{eq:ODE} that allow for the application of Theorem \ref{thm:control}. The basic condition (C1) only concerns $\sigma$, and we simply assume that it holds.

\begin{example}\label{ex:HHcontrol}
	We return to the stochastic Hodgkin--Huxley system, the stability of which has mainly been studied numerically. There is an open neighborhood $\cC$ of 0 such that every constant input $S\equiv c \in \cC$ is injectively and continuously mapped to an equilibrium
	%\[
	%	(x^*_c,y^*_c)=\left(x^*_c,\left(\frac{\alpha_1}{\alpha_1+\beta_1}\right)(x^*_c),\left(\frac{\alpha_2}{\alpha_2+\beta_2}\right)(x^*_c),\left(\frac{\alpha_3}{\alpha_3+\beta_3}\right)(x^*_c) \right)\in \R \times (0,1)^3
	%\]
	\[
	(x^*_c,y^*_c)=\left(x^*_c,\frac{\alpha_1(x^*_c)}{(\alpha_1+\beta_1)(x^*_c)},\frac{\alpha_2(x^*_c)}{(\alpha_2+\beta_2)(x^*_c)},\frac{\alpha_3(x^*_c)}{(\alpha_3+\beta_3)(x^*_c)} \right)\in \R \times (0,1)^3,
	\]
	which -- as simulations suggest -- is stable and locally attractive (see \cite[pages 533 and 548]{HLT3}), i.e.\ for all $\eps>0$ there is a $\delta=\delta(c,\eps)>0$ such that for all $\phi_0 \in B_\delta(x^*_c,y^*_c)$ we have
	\begin{equation}\label{eq:HHcontrol1}
	(x,y)[x_0,y_0,S\equiv c](t) \in B_\eps(x^*_c,y^*_c) \quad \text{for all $t\in[0,\infty)$},
	\end{equation}
	and there is some $\eps^*>0$ such that for all $\phi_0 \in B_{\eps^*}(x^*_c,y^*_c)$ we actually even have
	\[
	\phi[(x_0,y_0),S\equiv c](t) \xrightarrow{t\to\infty} (x^*_c,y^*_c).
	\]
	Therefore, (C2) and (C3') are taken for granted, and we write $\phi^*:=(x^*_0,y^*_0)$. Simulations show that if started slightly beside the equilibrium, the solution is immediately pulled in its direction, suggesting that one can in fact take $\delta=\eps$ in \eqref{eq:HHcontrol1}. This is also the reason why stability in the sense of \eqref{eq:HHcontrol1} should remain valid when also slightly and slowly changing the input over time as we do with the function $\rho$ in the third control phase in the proof of Theorem \ref{thm:control}: A small change of the input $c$ simply corresponds to a small change of the equilibrium $(x^*_c,y^*_c)$ towards which the trajectory is headed. This is essentially (C4'). The conditions (C5) and (C6) (with $x^*\equiv x_0^*$) follow easily from the fact that for a given trajectory of the membrane potential $x$, variation of constants yields that we can write
	\begin{equation*}\label{eq:HHinternal}
	y_i(t)=y_i(0)e^{-\int_0^t(\alpha_i+\beta_i)(x(s))ds}+\int_0^t\alpha_i(x(s))e^{-\int_s^t(\alpha_i+\beta_i)(x(r))dr}ds
	\end{equation*}
	for every $i\in\{1,2,3\}$ and $t\in[0,\infty)$.
\end{example}

\begin{example}\label{ex:toycontrolN=1}
We consider the system from Example \ref{ex:toy} for $N=1$ and the choice of $g$ that is given in \eqref{eq:toyg2}. For $S\equiv0$, the points $\phi^*=(\pm1,1,\ldots,1)$ fulfill (C2), (C3'), (C5) and (C6). The key to seeing this lies in considering $x$ and $y$ separately and using the classical Lyapunov method for each variable in a way that works (locally) uniformly in the other variable (see \cite[Example 2.29]{ICHDiss} for a detailed calculation for a similar example). Condition (C4') can be checked with essentially the same arguments as well (ibidem).
\end{example}

\begin{example}\label{ex:TOYcontrolorbit}
We take the functions $f$ and $g$ from Example \ref{ex:TOYcontrolglobal} and outside of $B_2(0)\subset\R^2$ we change their dependence on $x$ arbitrarily. Then $\cO^*=\mathbb S^1$ is still a periodic orbit of the corresponding system \eqref{eq:ODEphi} with $S\equiv0$, but now only local attractivity is guaranteed in $x$. However, (C3'), (C4') and (C5) (with $x^*=\cos(\cdot)$) can be checked via similar arguments as in Example \ref{ex:toycontrolN=1}. Note that this separation of the variables automatically solves the problem we had with the unstable equilibrium at the origin in Example \ref{ex:TOYcontrolglobal}.
\end{example}

\subsection{Globally attractive periodic solution under nonzero-input}

The scheme outlined in Remark \ref{rem:controlapproach} may be refined in the following way:
\begin{enumerate}
		\item Find a $T_*$-periodic signal $S_*$ (with $T_*\notin\Q T$) with mean zero over one period such that the corresponding deterministic system \eqref{eq:ODEphi} has a $T^*$-periodic solution (with $T^*\in\Q T_*$) that is locally attractive.
		\item Define $\dot h$ in such a way that the resulting input $\dot w$ steers $\psi=(u,v)$ into the domain of attraction of this solution. At the same time, $w$ must be guided to a given $z^*\in\tW$.
		\item Once $w$ reaches $z^*$, choose $\dot h$ such that $\dot w= S_*$. Then $w$ will keep returning to $z^*$ while $\psi$ approaches the attractive periodic solution.
\end{enumerate}
Under certain conditions on the interplay of the different occurring periodicities, we can then establish $(\phi^*,z^*)$ as a $T$-attainable point whenever $\phi^*$ is a suitable value of the periodic solution.

\begin{enumerate}
	\item[\textbf{(C2*)}] \textbf{Incommensurable periodic solution:} There is a non-constant $T_*$-periodic signal $S_*\in C([0,\infty);\R^N)$ with
	\[
	T_*\in (0,\infty) \setminus \Q T \quad \text{and} \quad \int_0^{T_*}S_*(t)dt=0
	\]
	such that for some $T^*\in\Q T_*$ and $\phi_* \in \interior(\tU\times\tV)$, the unique solution
	\[
	\phi^*(t):=\phi[\phi_*,S_*](t), \quad t\in[0,\infty),
	\]
	to the corresponding system \eqref{eq:ODEphi} is $T^*$-periodic and $\phi^*([0,T^*])\subset\interior(\tU\times\tV)$.
	\item[\textbf{(C3*)}] \textbf{Global attractivity of $\phi^*$:} For any $\phi_0 \in \tU\times\tV$, there is a unique global solution to \eqref{eq:ODEphi} with $S=S_*$, and it satisfies
	\[
	\abs{\phi[\phi_0,S_*](t)-\phi^*(t)}\xrightarrow{t\to\infty} 0.
	\]
	\item[\textbf{(C4*)}] \textbf{Non-explosiveness for moderately varied input:} For any $\phi_0 \in \tU\times\tV$, there is a $\delta_0=\delta_0(\phi_0)>0$ such that for any $S\in C\big([0,\infty);\R^N\big)$ with $\norm{S-S_*}_\infty \le \delta_0$, the system \eqref{eq:ODEphi} has a unique global solution.
\end{enumerate}
We can then prove the following result.

\begin{thm}\label{thm:controlperiodic}
	Let $\tW=\R^N$ and assume that (C1) and (C2*) - (C4*) hold. Then for any $t^* \in [0,T^*)$ and $z^*\in\R^N$ the point $\Phi^*=(\phi^*(t^*),z^*)\in\interior(\tE)$ is $T$-attainable.
\end{thm}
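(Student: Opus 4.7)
The plan is to construct, for each $\Phi_0=(\phi_0,z_0)\in\tE$, a single control path $\Psi=(u,v,w)$ whose $T$-grid orbit has $\Phi^*=(\phi^*(t^*),z^*)$ as a cluster point; Lemma \ref{lem:attainthesame} will then yield $T$-attainability. Set $L:=\mathrm{lcm}(T^*,T_*)$, which is finite since $T^*/T_*\in\Q$; both $\phi^*$ and $I(t):=\int_0^tS_*(s)\,ds$ are then $L$-periodic. Since $T^*\notin\Q T$, the ratio $L/T$ is irrational, so by Weyl's equidistribution theorem $\{nT\bmod L:n\in\N\}$ is dense in $[0,L)$.

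Set $r:=t^*\bmod T_*$ and $t_1:=mL+r$ with $m\in\N$ to be fixed large enough. Using mollification, construct $\rho\colon[0,\infty)\to\R^N$ with $\rho(0)=z_0$, $\rho(t)=z^*+\int_{t_1}^tS_*(s)\,ds$ for all $t\ge t_1$, and $\|\dot\rho-S_*\|_\infty\le\delta_0(\phi_0)$; once $m$ is large the constant slope needed on $[0,t_1]$ to bridge $z_0$ to $z^*$ shrinks into the $\delta_0$-tolerance of (C4*). With $\dot h:=\sigma^{-1}(\rho)(\dot\rho-S_0-\tilde b(\rho))$ (continuous thanks to (C1)), one obtains $w:=\rho$ and, by (C4*), a unique global solution $\psi=(u,v)$ to the first two lines of \eqref{eq:controlproof}, so $\Psi=(\psi,w)$ is a valid control path with $\Psi(0)=\Phi_0$.

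The analytic core is the claim $\psi(t)-\phi^*(t)\to0$ as $t\to\infty$. Since $\dot\rho(t)=S_*(t)$ on $[t_1,\infty)$, there $\psi$ solves $\dot\psi=f(\psi)+S_*(t)$; the idea is to identify it with a solution launched at time $0$. Concretely, one looks for $\phi_0'\in\tU\times\tV$ with $\phi[\phi_0',S_*](t_1)=\psi(t_1)$, so that uniqueness forces $\psi=\phi[\phi_0',S_*]$ on $[t_1,\infty)$ and (C3*) yields the claim. I expect this to be the main obstacle of the proof, because (C3*) as stated only governs solutions launched at time $0$; promoting it to a solution launched at time $t_1$ requires either a surjectivity argument for the time-$t_1$ forward flow of the autonomous-with-signal ODE on $\tU\times\tV$ (natural given smoothness of $f$ and $S_*$ and global existence) or reading (C3*) as time-invariant attractivity. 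The choice $t_1=mL+r$ is convenient here since $\phi^*(t-mL)=\phi^*(t)$ absorbs any residual phase produced by the shift argument.

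Finally, with $t^*\in[0,T^*)\subset[0,L)$, density furnishes a subsequence $n_\ell\to\infty$ with $n_\ell T\bmod L\to t^*$, hence both $n_\ell T\bmod T^*\to t^*$ and $n_\ell T\bmod T_*\to r$. The claim combined with continuity of $\phi^*$ forces $\psi(n_\ell T)\to\phi^*(t^*)$. Moreover $I(t_1)=I(r)$ by $T_*$-periodicity (since $mL\in T_*\Z$), and continuity of $I$ together with $n_\ell T\bmod T_*\to r$ gives $w(n_\ell T)=z^*+I(n_\ell T)-I(r)\to z^*$. Thus $\Psi(n_\ell T)\to\Phi^*$, which proves $\Phi^*\in\overline{\Psi(\N T)}$; Lemma \ref{lem:attainthesame} then closes the argument.
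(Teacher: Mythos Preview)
Your overall architecture matches the paper's: steer $w$ onto a path that eventually has derivative $S_*$, so that $(u,v)$ becomes a solution of \eqref{eq:ODEphi} with signal $S_*$ and is dragged onto $\phi^*$ by (C3*), then use the incommensurability $T_*\notin\Q T$ to pass from the $T_*$-grid to the $T$-grid. Your use of $L=\operatorname{lcm}(T^*,T_*)$ and Weyl equidistribution is a perfectly good substitute for the Kronecker-type argument the paper invokes at the end.

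The genuine gap is exactly where you flag it. Your handover time $t_1=mL+r$ satisfies $t_1\equiv r\pmod{T_*}$, so for $s\ge0$ you only get $\psi(t_1+s)=\phi[\psi(t_1),S_*(r+\cdot)](s)$, and (C3*) as stated says nothing about the shifted signal $S_*(r+\cdot)$. Neither of your proposed fixes works as written: surjectivity of the time-$t_1$ forward flow is \emph{not} a consequence of smoothness and global forward existence (e.g.\ $\dot x=-x^3$ on $\R$ has global forward solutions but the time-$t$ map has bounded range), so you cannot in general find $\phi_0'$ with $\phi[\phi_0',S_*](t_1)=\psi(t_1)$; and ``reading (C3*) as time-invariant'' is exactly the statement you need to justify, not assume.

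The paper closes this gap in its first sentence: the conditions (C2*)--(C4*) are invariant under replacing $(S_*,\phi_*,\phi^*)$ by $(S_*(t^*+\cdot),\phi^*(t^*),\phi^*(t^*+\cdot))$, so one may reduce to $t^*=0$. After this reduction the handover time can be taken as an integer multiple $k_0T_*$ of the signal period, and then periodicity of $S_*$ gives $\psi(k_0T_*+\cdot)=\phi[\psi(k_0T_*),S_*](\cdot)$ directly, so (C3*) applies with no surjectivity or reinterpretation needed. Equivalently, in your setup you could simply wait from $t_1$ until the next multiple of $T_*$ and invoke (C3*) there (using that $S_*$ is $T_*$-periodic); this yields $|\psi(t)-\phi^*(t-c)|\to0$ for a fixed shift $c$, which suffices once you also align $w$ accordingly. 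The cleanest route, however, is the paper's reduction to $t^*=0$, which dissolves the tension you created between the $\psi$-convergence (wanting $t_1\in T_*\N$) and the $w$-convergence (for which you chose $t_1\equiv r\bmod T_*$).
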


\begin{proof}
	Note that for any $t^*\in (0,T^*)$, the conditions (C2*) - (C4*) are also satisfied with $S_*(t^*+\,\cdot\,)$, $\phi^*(t^*)$, and $\phi^*(t^*+\,\cdot\,)$ in place of $S_*$, $\phi_*$, and $\phi^*$. This proof can therefore be reduced to the case $\phi^*(t^*)=\phi^*(0)=\phi_*$.
	
	Let $\Phi_0=(\phi_0,z_0)\in \tE$, $z^*\in\R^N$ and write $\Psi=(\psi,w)$ as in Remark \ref{rem:control}. Using linear interpolation, we can construct a smooth function $\kappa\colon[0,\infty)\to\tW$ with $\norm{\kappa}_\infty\le\delta_0$ such that $\kappa\equiv 0$ on $[k_0T_*,\infty)$ for some $k_0=k_0(z_0,\delta_0)\in\N$ and
	\[
	\int_0^{k_0T_*}\kappa(s)ds=z^*-z_0.	
	\]
	Then, the continuously differentiable function
	\[
	\rho\colon[0,\infty)\to\R^N, \quad t \mapsto z_0+\int_0^t\left(S_*(s)+\kappa(s)\right)ds
	\]
	fulfills not only
	\begin{equation}\label{eq:wreturns}
	\rho(0)=z_0 \quad \text{and} \quad \rho((k_0+k)T_*)=z^* \quad \text{for all $k\in\N$,}
	\end{equation}
	but also
	\begin{equation}\label{eq:rhocontrolsimple}
	\norm{\dot \rho-S_*}_\infty\le\delta_0 \quad \text{and} \quad \dot \rho=S_* \quad \text{on $[k_0T_*,\infty)$.}
	\end{equation}
	Now, we can once again define $\dot h$ as in \eqref{eq:hsimple} in order to force $w$ onto $\rho$ and then $\psi=(u,v)$ has to be the solution $\phi[\phi_0,\dot \rho]$ to \eqref{eq:ODEphi}, which uniquely exists thanks to (C4*). By \eqref{eq:rhocontrolsimple}, we have
	\[
	\abs{\psi(k_0T_*+t)-\phi^*(t)}=\abs{\phi[\psi(k_0T_*),S_*](t)-\phi^*(t)} \xrightarrow{t\to\infty}0,
	\]
	and since $\phi^*$ has a period that is a rational multiple of $T_*$, we can combine this with \eqref{eq:wreturns} and conclude that $\Phi^*\in \overline{\Psi(\N T_*)}$. Since $T_*\notin\Q T$, an argument similar to the one in Lemma \ref{lem:period} (but simpler) shows that also $\Phi^*\in \overline{\Psi(\N T)}$.
\end{proof}

\begin{example}
Let $T\in\Q$, $\tU=\tV=\tW=\R$ and
\[
	f(x,y)=-\big(1+y^2\big)x, \quad g(x,y)=x-y  \quad \text{for all $x,y\in\R$.}
\]
The corresponding system \eqref{eq:ODEphi} fulfills the conditions of Theorem \ref{thm:controlsimple} with $\cO^*=\{(0,0)\}\subset\R^2$, and hence any point on the line $\{(0,0)\}\times\R$ is $T$-attainable. If we set
\[
	S_*(t)=\cos t , \quad \phi^*(t)=\frac12\begin{pmatrix}\cos t +\sin t \\ \sin t \end{pmatrix} \quad \text{for all $t\in[0,\infty)$,}
\]
the corresponding system \eqref{eq:ODEphi} with $S=S_*$ fulfills the assumptions of Theorem \ref{thm:controlperiodic}, showing that any point on the tube $\phi^*([0,2\pi])\times\R$ is also $T$-attainable.
%\[
%	S_*(t)=\cos\left(\frac{2\pi t}{T_*}\right), \quad \phi^*(t)=\frac{T_*\left(T_*\cos\left(\frac{2\pi t}{T_*}\right)+2\pi\sin\left(\frac{2\pi t}{T_*}\right)\right)}{4\pi^2+(T_*)^2}
%\]
\end{example}

\begin{example}\label{ex:TOYnonzeroinput}
We revisit Example \ref{ex:toy} once again, this time for $N=2$ and for $g$ as in \eqref{eq:toyg1}. If $S\equiv0$, each point in $(\mathbb S^1 \cup \{0\})\times\{0\}\subset \R^{2+L}$ is an equilibrium of the system \eqref{eq:ODEphi}, and no further periodic orbits exist. Unfortunately, none of these equilibria are attractive in the sense of (C3) or (C3'), so Theorems \ref{thm:controlsimple} and \ref{thm:control} provide no help at all in finding a $T$-attainable point. However, for $T\in\Q$ the assumptions of Theorem \ref{thm:controlperiodic} are satisfied with
\[
S_*(t)=\begin{pmatrix}\cos t \\ \sin t \end{pmatrix}, \quad \phi^*(t)=\begin{pmatrix}\sin t \\ -\cos t \\ 0\end{pmatrix} \quad \text{for all $t\in[0,\infty)$,}
\]
showing that indeed any point in $\phi^*([0,2\pi])\times\R^2=\mathbb S^1\times\{0\}\times\R^2\subset \R^{2+L+2}$ is $T$-attainable. The influence of the signal $S_*$ can be interpreted as a constant rotation of the mexican hat with the exact speed needed to turn a curve of unstable equilibria into a globally asymptotically stable periodic solution.
% Kann man auch unkonstant drehen und so andere Perioden von $S_*$ ermöglichen? Sozusagen durch ausbalancierte Kompensation dessen, was man zu viel schubst? Könnte gehen... Wäre hübsch, da dann keine VAS an $T$ mehr gebraucht würde.
\end{example}

\subsection{Two Lemmas}\label{sect:lemmas}

\begin{lem}\label{lem:period}
	Let (C2) hold and let $\phi_0\in \tU\times\tV$. Further assume one of the following conditions:
	\begin{enumerate}
		\item[(a)] (C3) holds.
		\item[(b)] (C3') holds and $\phi_0\in B_{\eps^*}(\cO^*)$.
	\end{enumerate}
	Then $\cO^*\subset\overline{\phi[\phi_0,S\equiv0](\N T)}$.
\end{lem}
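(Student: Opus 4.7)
The plan is to analyze the set of accumulation points of the discrete orbit $(\phi[\phi_0,S\equiv0](nT))_{n\in\N}$ as a non-empty, closed, forward-invariant subset of $\cO^*$, and then to exploit that the time-$T$ map restricted to $\cO^*$ is an irrational rotation of the circle and therefore minimal.

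Set $\phi:=\phi[\phi_0,S\equiv0]$. Under either (a) or (b) this is a global solution with $\dist{\phi(t)}{\cO^*}\to0$ as $t\to\infty$. The case $T^*=0$ is immediate, since then $\cO^*$ consists of a single equilibrium point and $\phi(nT)$ must converge to it. So from here on assume $T^*>0$ with $T/T^*\notin\Q$.

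Let $\omega_T$ denote the set of accumulation points of $(\phi(nT))_{n\in\N}$. Compactness of $\cO^*$ together with $\dist{\phi(t)}{\cO^*}\to0$ forces $(\phi(nT))_n$ to lie eventually in a bounded neighborhood of $\cO^*$, so $\omega_T$ is non-empty, and the same attractivity yields $\omega_T\subset\cO^*$. By (A2) and standard continuous dependence, the time-$T$ flow map $F^T\colon\phi_0\mapsto\phi[\phi_0,S\equiv0](T)$ is continuous on the relevant domain; combined with the identity $\phi((n+1)T)=F^T(\phi(nT))$, this implies that $\omega_T$ is forward-invariant under $F^T$. Parameterize $\cO^*$ by the flow via $\varphi\colon\R/T^*\Z\to\cO^*$, $s\mapsto\phi[\phi_*,S\equiv0](s)$ for some fixed $\phi_*\in\cO^*$; then $F^T|_{\cO^*}$ is the rotation $\varphi(s)\mapsto\varphi(s+T\bmod T^*)$. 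Since $T/T^*\notin\Q$, this rotation on $\cO^*\cong\R/T^*\Z$ is minimal, so the only non-empty closed forward-invariant subset is $\cO^*$ itself. Hence $\omega_T=\cO^*$, that is, $\cO^*\subset\overline{\phi(\N T)}$.

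The step requiring the most care is verifying that $F^T$ really restricts to a rigid rotation on $\cO^*$: this uses that $T^*$ is the \emph{minimal} period in (C2), so that $\varphi$ is actually a bijection from $\R/T^*\Z$ onto $\cO^*$ and the flow moves points along it with unit phase speed. The rest is standard $\omega$-limit bookkeeping together with the classical fact that irrational rotations of the circle are minimal.
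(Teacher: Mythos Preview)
Your proof is correct and takes a different, more conceptual route than the paper's. The paper fixes $\phi^*\in\cO^*$ and $\eps>0$ and works quantitatively: it traps the trajectory in $B_\eps(\cO^*)$ after some time $t_0$, extracts a Lipschitz constant $L(\eps)$ for $\phi|_{[t_0,\infty)}$, introduces the transverse hyperplane $\cN$ through $\phi^*$, and uses a Poincar\'e-section argument (citing \cite{MeyerOffin}) to show that the successive return times $T^*_n$ to $\cN\cap B_\eps(\phi^*)$ satisfy $T^*_n-T^*_{n-1}\to T^*$. A separate Kronecker-type approximation (Lemma~\ref{lem:Kronecker}) then produces $m,n$ with $|T^*_n-mT|<\eps/L(\eps)$, whence $|\phi(mT)-\phi^*|<2\eps$.

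Your argument instead identifies the $\omega$-limit set of $(\phi(nT))_n$ as a non-empty closed $F^T$-invariant subset of $\cO^*$ and invokes minimality of the irrational rotation $F^T|_{\cO^*}$. This is shorter and sidesteps both the Poincar\'e-section machinery and the auxiliary Kronecker lemma. The paper's approach, while heavier, is more explicit and isolates the Diophantine input into a standalone statement (Lemma~\ref{lem:Kronecker}) that only needs asymptotic periodicity of a sequence of times; that modularity is convenient when the argument is reused later in the paper. Both proofs ultimately rest on the same number-theoretic fact---yours packaged as dynamical minimality rather than an explicit approximation estimate.
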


\begin{proof}
	Let $\phi^*\in\cO^*$. If $T^*=0$ and hence the orbit contains only a single equilibrium point, this lemma's claim is trivial. Therefore we assume $T^*\in(0,\infty)\setminus \Q T$, write $\phi:=\phi[\phi_0,S\equiv0]$ and let $\eps>0$. We want to show that there is some $m\in\N$ such that $\phi(mT)\in B_{2\eps}(\phi^*)$. In both cases (a) and (b), $\phi$ converges to $\cO^*$, so there is some $t_0=t_0(\eps)\in(0,\infty)$ with $\phi([t_0,\infty))\subset B_\eps(\cO^*)$. Since $\cO^*=\phi[\phi^*,S\equiv0]([0,T^*])$ is compact, $F$ is bounded on $B_\eps(\cO^*)$. As $\dot\phi=F(\phi)$, this means that $\phi|_{[t_0,\infty)}$ is Lipschitz continuous with a Lipschitz constant $L(\eps)\in(0,\infty)$. Let
	\[
	\cN:=\left\{\phi^*+\xi\,\middle|\,\xi\in\R^{N+L}, \, \xi\cdot F(\phi^*)=0\right\}\subset\R^{N+L}
	\]
	denote the hyperplane that is orthogonal to the orbit $\cO^*$ in the point $\phi^*$. This is indeed a well-defined hyperplane, since $T^*>0$ and therefore $F(\phi^*)\neq0$. If $T^*_n$ denotes the $n$-th time after $t_0$ at which $\phi$ passes through $\cN\cap B_\eps(\phi^*)$, \cite[Lemma 8.5.3 and its proof]{MeyerOffin} together with the attraction property (C3) or (C3') yield that $T^*_n$ is well-defined for sufficiently small $\eps>0$ and that we have
	\[
	T^*_n-T^*_{n-1} \xrightarrow{n\to\infty}T^*.
	\]
	Then Lemma \ref{lem:Kronecker} below provides $m,n\in\N$ with
	\[
	|T^*_n-mT|<\frac{\eps}{L(\eps)}
	\] 
	and hence we have
	\[
	\abs{\phi(mT)-\phi^*}\le L(\eps)\abs{mT-T^*_n}+\abs{\phi(T^*_n)-\phi^*}<2\eps,
	\]
	which completes the proof.
\end{proof}

\begin{lem}\label{lem:Kronecker}
	Let $T\in(0,\infty)$ and $T^*\in (0,\infty)\setminus \Q T$, and let $(T^*_n)_{n\in\N}\subset \R$ with
	\begin{equation}\label{eq:lemKronecker}
	T^*_n-T^*_{n-1} \xrightarrow{n\to\infty}T^*.
	\end{equation}
	Then for any $\eps>0$, there are $m,n\in\mathbb N$ such that
	\[
	|T^*_n-mT|<\eps.
	\]
\end{lem}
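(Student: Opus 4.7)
The strategy is to combine Kronecker's density theorem with the hypothesis $\delta_n := T^*_n - T^*_{n-1} - T^* \to 0$. Since $T^*/T \notin \Q$, Kronecker's theorem yields $p,q\in\N$ with $0 < \gamma := pT^* - qT < \eps/2$. Only after fixing such $p,q$ (and thereby the ``stride count'' $j_0 := \lceil T/\gamma \rceil$) I would choose $N_0\in\N$ large enough that
\[
	|\delta_n| < \frac{\eps}{2pj_0} \quad \text{for all $n > N_0$}, \qquad \text{and} \qquad T^*_{N_0} > \eps,
\]
the latter being possible because $T^*_n \to \infty$ (as $T^*_n - T^*_{n-1} \to T^* > 0$).

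Next, a telescoping computation. Setting $a_j := T^*_{N_0 + jp}$ for $j=0,\ldots,j_0$, the hypothesis yields
\[
	a_{j+1} - a_j \,=\, pT^* + \sum_{k=N_0+jp+1}^{N_0+(j+1)p}\delta_k \,=\, qT + \gamma + r_j
\]
with $|r_j| < p\cdot\eps/(2pj_0) = \eps/(2j_0)$, and hence
\[
	a_j \,=\, a_0 + jqT + j\gamma + E_j, \qquad |E_j| \,\leq\, \frac{j\eps}{2j_0} \,\leq\, \frac{\eps}{2} \quad \text{for all $j \leq j_0$.}
\]

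Finally, a pigeonhole argument on $[0,T)$ selects the desired index. Since $j_0\gamma \geq T$ and $a_0 = T^*_{N_0} > \eps > 0$, the arithmetic progression $a_0 < a_0 + \gamma < \ldots < a_0 + j_0 \gamma$ covers a range of length at least $T$, so for some $j_1 \in \{1,\ldots,j_0\}$ and some integer $m' \geq 1$ one has $|a_0 + j_1\gamma - m'T| < \gamma < \eps/2$. Setting $m := j_1 q + m'$ and $n := N_0 + j_1 p$, I conclude
\[
	|T^*_n - mT| \,=\, |a_{j_1} - mT| \,\leq\, |a_0 + j_1\gamma - m'T| + |E_{j_1}| \,<\, \eps/2 + \eps/2 \,=\, \eps,
\]
with $m, n \in \N$.

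The main obstacle is the balancing act inherent in such Kronecker-type arguments: a small $\gamma$ is needed so that the ``lap'' of length $j_0\gamma$ around $[0,T)$ lands inside an $\eps/2$-neighborhood of a multiple of $T$, but a small $\gamma$ forces large $j_0$ and hence a much tighter per-stride perturbation bound. Committing to $\gamma$ (and thus $j_0$) \emph{before} selecting $N_0$ is precisely what resolves this tension.
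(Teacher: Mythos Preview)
Your argument is correct and follows essentially the same route as the paper's: telescope the increments back to $T^*$ and then invoke a Kronecker-type density with a \emph{uniform} step bound to hit a neighborhood of $\N T$. The only difference is packaging---the paper cites a strengthened Kronecker theorem that directly supplies a bound $N(\eps)$ uniform in the base point, whereas you rebuild that uniformity by hand via the single gap $\gamma=pT^*-qT$ and the pigeonhole on $[0,T)$; note as a cosmetic point that your pigeonhole may return $j_1=0$ (when $a_0$ itself is already within $\gamma$ of some $m'T$), but your final estimate and the conclusions $m,n\in\N$ go through unchanged in that case.
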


\begin{proof}
	Thanks to a strengthened version of Kronecker's Approximation Theorem (compare \cite[Chapter 7, Exercise 3]{Kronecker}), we know that for each $\eps>0$ there is an $N(\eps)\in\N$ such that for all $t\in[0,\infty)$ there are $n(t,\eps) \in \{1,\ldots, N(\eps)\}$ and $m(t,\eps)\in\N$ with the property
	\[
	|t+n(t,\eps)T^*-m(t,\eps)T|<\eps.
	\]
	Now let $\eps>0$. Thanks to the assumption \eqref{eq:lemKronecker} there is an $n_0\in\N$ such that
	\[
	T^*_{n_0}>0 \quad \text{and}\quad \abs{(T^*_k-T^*_{k-1})-T^*}<\frac{\eps}{2N\big(\frac{\eps}{2}\big)} \quad \text{for all $k\ge n_0$.}	
	\]
	Then for all $n\ge n_0$ and $m\in\N$ we have
	\begin{align*}
		\abs{T^*_n-mT} &= \abs{\sum_{k=n_0+1}^n\big((T^*_{k}-T^*_{k-1})-T^*\big)+T^*_{n_0}+(n-n_0)T^*-mT}\\
		&\le (n-n_0)\frac{\eps}{2N\big(\frac{\eps}{2}\big)}+\abs{T^*_{n_0}+(n-n_0)T^*-mT}
	\end{align*}
	and choosing $n=n_0+n\big(T^*_{n_0},\frac{\eps}{2}\big)$ and $m=m\big(T^*_{n_0},\frac{\eps}{2}\big)$ finishes the proof.
\end{proof}

\section{Local H{\"o}rmander condition}\label{sect:lwh}

In this section, we will present conditions on the external equation and on the way $X$ and $Y$ interact through the functions $f$ and $g$ that allow to prove condition (III) of Theorem \ref{thm:HLT} via the  so-called \emph{local H{\"o}rmander condition} (see Definition \ref{def:lwh} and Lemma \ref{lem:hoermanderIII} below).

\begin{notation}
	Whenever it is convenient, we will denote elements of $\R^{1+N+L+N}$ by
	\begin{equation}\label{eq:xi}
	\xi=(\xi_0,\ldots,\xi_{N+L+N}) :=(t,x_1,\ldots,x_N,y_1,\ldots,y_L,z_1,\ldots,z_N) =(t,x,y,z)=(t,\phi,z)=(t,\Phi)
	\end{equation}
	%\[
	%\xi=\begin{pmatrix}\xi_0 \\ \vdots \\ \xi_{N+L+N}\end{pmatrix} :=\begin{pmatrix} t\\x_1\\ \vdots\\x_N\\y_1\\ \vdots \\y_L\\z_1\\ \ldots \\ z_N\end{pmatrix}=(t,x,y,z)\in\R^{1+N+L+N}
	%\]
	and use the abbreviation
	\[
	\hat b(t,z):=S_0(t)+\tilde b(z) \quad \text{for all $(t,z)\in[0,\infty)\times\tW$,}
	\]
	where $\tilde b$ is as it was defined in \eqref{eq:stratodrift}. For any $m,n\in\N$, we will also use the notation
	\[
	A_{\cdot,k}:=\begin{pmatrix} A_{1,k} \\ \vdots \\ A_{n,k}	\end{pmatrix} \in \R^n \quad \text{for all $k\in\{1,\ldots,m\}$}
	\]
	for the $k$-th column of a matrix $A \in\R^{n\times m}$, and we will write $0_n$ for the zero vector in $\R^n$.
\end{notation} 

%\begin{definition}\label{def:lie}
%	Let $n\in\N$, $D\subset\R^n$ and consider vector fields $V_1,V_2\in C^1(D;\R^n)$. The \emph{Lie bracket of $V_1$ and $V_2$} is defined as the vector field
%	\[
%	[V_1,V_2]:=J_{V_2}V_1 - J_{V_1} V_2 \in C(D;\R^n),
%	\]
%	where $J_{V_i}$ denotes the respective Jacobian matrix. %, i.e.
	%\[
	%J_{V_i}(x)=(\d_{x_l}V_i^{(k)}(x))_{k,l\in\{1,\ldots,n\}} \quad \text{for all $x\in\R^n$.}
	%\]
	%For any $\cC\subset C^\infty(D;\R^n)$ we denote by $\cC^*$ the \emph{Lie algebra generated by $\cC$}, i.e.\ the smallest linear subspace of $C^\infty(D;\R^n)$ that contains $\cC$ and that is closed with respect to the binary operation of taking Lie brackets.
%\end{definition}

For vector fields $V_1,V_2\in C^1(D;\R^n)$ on some $D\subset\R^n$, $n\in\N$, the \emph{Lie bracket} is defined as the vector field
\[
[V_1,V_2]:=J_{V_2}V_1 - J_{V_1} V_2 \in C(D;\R^n),
\]
where $J_{V_i}$ denotes the respective Jacobian matrix. Since this operation is bilinear, we can and will view the space $C^\infty(D;\R^n)$ as an algebra that has the Lie bracket as its multiplication. For any $\cC\subset C^\infty(D;\R^n)$, we write $\langle\cC\rangle$ for the subalgebra generated by $\cC$. For any subalgebra $\cL\subset C^\infty(D;\R^n)$ containing $\cC$, we write $[\cC]_\cL$ for the ideal in $\cL$ that is generated by $\cC$.

The idea behind the Lie bracket is that by combining motions in the directions $V_1$ and $V_2$ one can effectively approximate motion in the directions $[V_1,V_2]$, $[V_1,[V_1,V_2]]$, \ldots and ultimately in any direction in $\langle\{V_1,V_2\}\rangle$. If $V_1$ and $V_2$ occur as the drift or as a column of the diffusion matrix of a Stratonovich SDE, this can be exploited in order to determine along which directions its solution can evolve locally. A detailed heuristic explanation of this idea can be found in \cite[Section 2]{HairerMalli}.

If we want to apply this reasoning to our time-inhomogeneous setting, we have to consider the homogeneous $(1+N+L+N)$-dimensional time-space process $(t,\Phi_t)_{t\in[0,\infty)}$. It solves the Stratonovich SDE
\[
d(t,\Phi_t)=V_0 (t,\Phi_t)dt+\sum_{k=1}^M V_k(t,\Phi_t)\circ dW^{(k)}_t
\]
with the vector fields $V_k \colon [0,\infty)\times\tE\to\R^{1+N+L+N}$ defined by
\begin{equation}\label{eq:Vk}
V_0(t,\Phi):= \begin{pmatrix} 1 \\ \tilde B (t,\Phi)\end{pmatrix}, \quad V_k(t,\Phi):= \begin{pmatrix} 0 \\ \Sigma_{\cdot,k} (\Phi)  \end{pmatrix} \quad \text{for all $k \in\{1,\ldots,M\}$,}
\end{equation}
with $B$ and $\Sigma$ as in \eqref{eq:B} and \eqref{eq:Sigma}. The following Definition \ref{def:lwh} formalizes the idea that, locally, the process $(\Phi_t)_{t\in [0,\infty)}$ should be able to move in any direction. Context-specific variants are classically used in order to prove the existence of transition densities, usually incorporating tools from the Malliavin calculus (see for example \cite{HairerMalli} and the references therein).

\begin{definition}\label{def:lwh}
	We set
	\[
	\cC :=  \{V_1,\ldots,V_M\}, \quad \cL :=  \langle\{V_0,\ldots,V_M\}\rangle, \quad\text{and}\quad \cL^*:=	[\cC]_\cL.
	\]
	If there is some $\Phi\in\interior(\tE)$ such that
	\begin{equation}\label{eq:LH}
	\spann \{V(t,\Phi)\,|\, V\in \cL^* \} \simeq \R^{N+L+N} \quad \text{for all $t\in [0,\infty)$},
	\end{equation}
	we say that the \emph{local H{\"o}rmander condition holds at $\Phi$}.
\end{definition}

%\begin{definition}\label{def:lwh}
%	By recursion, we define
%	\begin{align*}\label{eq:liecL}
%		\begin{split}
%			\cL_0 :=  \{V_1,\ldots,V_M\}, \quad % \\
%			\cL_n :=  \cL_{n-1}\cup\{[V_k,V] \,|\, V \in \cL_{n-1}, k \in \{0,\ldots,M\}\}, \quad n\in\N. \\
			%\cL(t,\Phi) := & \spann\bigcup_{n\in\N} \{V(t,\Phi)\,|\, V\in \cL_n \}, \quad (t,\Phi) \in [0,\infty)\times\tE.
%		\end{split}
%	\end{align*}
%	If there are $\Phi\in\interior(\tE)$ and $n^*\in\N$ such that
%	\[
%	\spann \{V(t,\Phi)\,|\, V\in \cL_{n^*} \} \simeq \R^{N+L+N} \quad \text{for all $t\in [0,\infty)$},
%	\]
%	we say that the \emph{local H{\"o}rmander condition holds at $\Phi$}.
%\end{definition}

\begin{lem}\label{lem:hoermanderIII}
	The local H{\"o}rmander condition at $\Phi^*\in\interior(\tE)$ implies condition (III) of Theorem \ref{thm:HLT}.
\end{lem}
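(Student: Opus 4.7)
The plan is to recognize \eqref{eq:LH} as the weak (parabolic) H\"ormander condition for the time-homogeneous $(1+N+L+N)$-dimensional Stratonovich SDE satisfied by the time-space process $(t,\Phi_t)_{t\ge 0}$ with coefficient fields $V_0,V_1,\ldots,V_M$ from \eqref{eq:Vk}, and then invoke the corresponding local hypoellipticity machinery to produce the required local density. First I would observe that, because every $V_k$ with $k\ge 1$ has vanishing first component and $V_0$ has first component identically equal to $1$, any iterated Lie bracket that lies in the ideal $\cL^*=[\cC]_\cL$ has vanishing first component, so that its values may be viewed as elements of $\R^{N+L+N}$. Thus \eqref{eq:LH} is exactly the standard parabolic H\"ormander condition for $(t,\Phi_t)$ at the point $\Phi^*$ (uniformly in $t$), with brackets involving the drift $V_0$ allowed.

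Next I would localize: since all $V_k$ are smooth by (A2) and the bracket operation is continuous, the rank of $\spann\{V(t,\Phi)\,|\,V\in\cL^*\}$ is lower semi-continuous in $\Phi$. Hence \eqref{eq:LH} at $\Phi^*$ persists on an open neighborhood $\cU^*\subset\interior(\tE)$ of $\Phi^*$ (uniformly in $t$, by $T$-periodicity of $V_0$ in time). On $\cU^*$, classical Malliavin-calculus arguments as encoded in \cite[Theorems 1 and 2]{HLT2} (or, alternatively, Bell's or H\"ormander's hypoellipticity theorem applied to the parabolic generator of $(t,\Phi_t)$) then yield: for every $\Phi_0\in\tE$ and every $n\in\N$, the measure $P_{0,nT}(\Phi_0,\,\cdot\,)$ possesses a density $p_{0,nT}(\Phi_0,\,\cdot\,)$ on $\cU^*$ that is smooth in the target variable. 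Because the only source of time-inhomogeneity is the $T$-periodic signal $S_0$ and we sample at integer multiples of $T$, no genuine time-dependence of the forward kernel enters into the hypoellipticity argument after passing to the time-space process.

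The main obstacle, as flagged in Remark \ref{rem:HLT}.3, is that condition (III) demands lower semi-continuity \emph{jointly} in $(\Phi_0,\Psi)\in\tE\times\cU^*$, whereas the results of \cite{HLT2} are formulated separately in the two variables. To bridge this gap I would proceed as at the end of the proof of \cite[Theorem 1]{HLT2}: shrinking $\cU^*$ to a relatively compact neighborhood of $\Phi^*$ if necessary, the derivatives of $\Psi\mapsto p_{0,nT}(\Phi_0,\Psi)$ can be controlled uniformly in $\Phi_0$ by Malliavin-type estimates on the inverse of the Malliavin covariance matrix, giving equi-continuity of the family $\{p_{0,nT}(\Phi_0,\,\cdot\,)\}_{\Phi_0\in\tE}$ on $\cU^*$. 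Combining this equi-continuity with the lower semi-continuity of $\Phi_0\mapsto p_{0,nT}(\Phi_0,\Psi)$ (which follows from Fatou's lemma applied to any weakly convergent sequence of initial laws) then yields the desired joint lower semi-continuity of $p_{0,nT}$ on $\tE\times\cU^*$, completing the verification of condition (III).
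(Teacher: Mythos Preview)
Your proposal is correct and follows essentially the same route as the paper: the paper's proof is the single sentence ``This follows from \cite[Theorems 1 and 2]{HLT2} and Remark \ref{rem:HLT}'', and your argument is precisely an unpacking of what those two citations contain, including the equi-continuity fix for joint lower semi-continuity that Remark \ref{rem:HLT}.3 spells out. The only spot where you are slightly loose is the justification of lower semi-continuity in $\Phi_0$ via ``Fatou applied to weakly convergent initial laws''---this needs the equi-continuity in $\Psi$ together with a Portmanteau-type argument on small balls rather than Fatou alone, but once combined with the uniform estimates you already invoke, the conclusion goes through.
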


\begin{proof}
This follows from \cite[Theorems 1 and 2]{HLT2} and Remark \ref{rem:HLT}.
\end{proof}

For general diffusions, there is very little one can say about non-trivial sufficient conditions for the local H{\"o}rmander condition. One usually depends on ad hoc arguments that exploit the particular shape of the drift and the diffusion matrix. For systems of the special type \eqref{eq:SDE} however, there are two fairly general scenarios that we can treat without confining us to entirely specific examples. Both are based on the idea that external noise (in the form of the $Z$-variable) is explicitly imported only into the $X$-variable, while the coefficient functions $f$ and $g$ have to transport and distribute its influence suitably among all of the $X$- and $Y$-variables.

\begin{enumerate}
	\item[\textbf{(I)}] \textbf{Star shape: }Every component depends on the $X$-variables in such a way that sufficient amounts of noise are able to spread from $Z$ via $X$ to the rest of the system.  
	\item[\textbf{(II)}] \textbf{Cascade structure: }There is a chain of components such that with each step exactly one more of them is directly influenced by the previous one, and this chain ultimately runs through the entire system.
\end{enumerate}

Of course, in both of these scenarios we also have to make sure that $X$ and $Z$ are not coupled in a degenerate way. However, since the random terms in their respective equations coincide, their interaction is basically coded in the difference of their drift coefficients, which is again simply given by the function $f$. It therefore seems natural that, as long as $\sigma$ is nice enough for $Z$ not to be degenerate itself, it should be possible for us to find conditions solely on $f$ and $g$ that are sufficient for the local H{\"o}rmander condition.

The star shape setting is inspired by the Hodgkin-Huxley system and its variants (see Section 4 of \cite{HLT2}, compare Example \ref{ex:HHlwh} below), while the cascade structure setting is inspired by a model of interacting neurons that is studied in Section 5.3 of \cite{Cascade} (see Remark \ref{rem:H1H2}). In this section, both scenarios will be treated under more general assumptions that transcend the scope of these specific models.

Before we proceed in treating each of these situations separately in the Subsections \ref{sect:star} and \ref{sect:cascade} below, we will first collect some preparatory observations and calculations.

%Given the particular shape of $V_0,\ldots,V_M$, the following basic Lemma will be helpful in the sequel. It should be stressed once more at this occasion that in accordance with Definition \ref{def:lie} we use the same Lie bracket notation -- consistently -- not only for vector fields on $[0,\infty)\times\tE$ but also for vector fields of lower dimensions.

\begin{lem}\label{lem:liebasic}
	Let $\cU\subset\tE$ open and assume that for $(t,\Phi)\in[0,\infty)\times\bar\cU$ the mappings
	\[
	(t,z) \mapsto A_i(t,z) \in \R^N, \quad	\phi \mapsto W_i(\phi) \in \R^{N+L}, \quad (t,z) \mapsto a_i(t,z) \in \R,
	\]
	are continuously differentiable for $i\in\{1,2\}$. Then for all $(t,\Phi)\in[0,\infty)\times\bar\cU$ we have
	\begin{align}
	\text{1.}&\left[\begin{pmatrix} A_1(t,\cdot) \\ 0_L \\ A_1(t,\cdot) \end{pmatrix}, \begin{pmatrix} A_2(t,\cdot) \\ 0_L \\ A_2(t,\cdot) \end{pmatrix}\right]\!(\Phi) = \begin{pmatrix} [A_1(t,\cdot),A_2(t,\cdot)](z) \\ 0_L \\ [A_1(t,\cdot),A_2(t,\cdot)](z) \end{pmatrix}, \notag \\
	\text{2.}&\left[ a_1(t,\cdot) \begin{pmatrix} W_1 \\ 0_N \end{pmatrix}, a_2(t,\cdot) \begin{pmatrix} W_2 \\ 0_N \end{pmatrix}\right]\!(\Phi) = a_1(t,z)a_2(t,z)\begin{pmatrix} [W_1,W_2](\phi) \\ 0_N \end{pmatrix}, \notag \\
	\text{3.}&\left[a_1(t,\cdot) \begin{pmatrix} W_1 \\ 0_N \end{pmatrix},\begin{pmatrix} A_1(t,\cdot) \\ 0_L \\ A_1(t,\cdot) \end{pmatrix}\right]\!(\Phi) = \displaystyle{-\sum_{j=1}^N} A_1^{(j)}(t,z)\Bigg( a_1(t,z) \begin{pmatrix} \d_{x_j}W_1(\phi) \\ 0_N \end{pmatrix} %\notag \\ &\hspace{10cm}
	+ \d_{z_j}a_1(t,z) \begin{pmatrix} W_1(\phi) \\ 0_N \end{pmatrix}\Bigg). \notag
	\end{align}
\end{lem}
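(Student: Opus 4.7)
My plan is to compute each of the three brackets directly from the definition $[V_1,V_2]=J_{V_2}V_1 - J_{V_1}V_2$, exploiting the block structure of the vector fields on $\tE=\tU\times\tV\times\tW$ and keeping $t$ as a parameter throughout. The decisive observation, which makes every computation collapse to a short expression, is a simple separation of variables: the scalar prefactors $a_i(t,z)$ depend only on $(t,z)$, while the inner vector factors $W_i(\phi)$ depend only on $\phi=(x,y)$, and several of the structural slots are occupied by zero blocks. So in every Jacobian, only one column-block is nonzero, and applying it to the other vector field kills most terms automatically.

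For part 1, both $V_i(\Phi)=(A_i(t,z),0_L,A_i(t,z))^\top$ depend on the spatial variables only through $z$, so $J_{V_i}$ has nonzero entries only in the $z$-columns, and those columns consist of two copies of $\partial_z A_i(t,z)\in\R^{N\times N}$ stacked with a zero block. Applying each Jacobian to the other vector field and subtracting then produces $\partial_z A_2 \cdot A_1 - \partial_z A_1 \cdot A_2$ in both the first and third block and zero in the middle, which is exactly the spatial Lie bracket of $A_1(t,\cdot)$ and $A_2(t,\cdot)$ viewed as vector fields on $\tW\subset\R^N$. For part 2, I will invoke the Leibniz rule $[fY_1,gY_2]=fg[Y_1,Y_2]+f(Y_1 g)Y_2-g(Y_2 f)Y_1$ with $f=a_1(t,\cdot)$, $g=a_2(t,\cdot)$, and $Y_i=(W_i,0_N)^\top$; the cross terms $Y_1 g$ and $Y_2 f$ vanish because $Y_i$ has vanishing $z$-component while $a_j$ depends only on $z$. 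What remains is $a_1 a_2 [Y_1,Y_2]$, and a one-line Jacobian computation (analogous to part 1) shows $[Y_1,Y_2](\Phi)=([W_1,W_2](\phi),0_N)^\top$.

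For part 3, I will again apply the Leibniz rule in the form $[a_1 Y, V_2]=a_1[Y,V_2]-(V_2 a_1)Y$ with $Y=(W_1,0_N)^\top$. The directional derivative $V_2 a_1=\sum_{j=1}^N A_1^{(j)}(t,z)\,\partial_{z_j}a_1(t,z)$ produces the second family of summands. For the remaining bracket $[Y,V_2]$, the term $J_{V_2} Y$ vanishes identically because $V_2$ is $z$-dependent only while the last $N$ components of $Y$ are zero, and $J_Y V_2$ collapses to $\sum_{j=1}^N A_1^{(j)}(t,z)(\partial_{x_j}W_1(\phi),0_N)^\top$. Combining the two contributions with their correct signs yields the stated identity. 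The whole lemma is essentially bookkeeping, and the only genuine ``obstacle'' is making sure the row/column block structure is kept straight; there is no conceptual subtlety beyond the observation that $\partial a_i/\partial x_j = \partial a_i/\partial y_k = 0$ and $\partial W_i/\partial z_j=0$, which is what forces the cancellations in every step.
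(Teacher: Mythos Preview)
Your proposal is correct and matches the paper's approach exactly: the paper simply states that all three formulas follow immediately from the definition of the Lie bracket by straightforward calculations, and what you have written is precisely a careful unwinding of those calculations using the block structure and the Leibniz rule. There is nothing to add or correct.
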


\begin{proof}
	All of these formulas follow immediately from the definition of the Lie bracket by straight forward calculations.
\end{proof}

\begin{remark}\label{rem:lietime}
	Let us make some comments on the role of time, i.e.\ the $(0)$-components of the occurring vector fields on the one hand and the derivatives with respect to $t=\xi_0$ on the other hand.
	
	1.) First, note that for all $k\in\{0,\ldots,M\}$ the $(0)$-component of $V_k$ is constant. Hence, for any vector field $W\in C^1\big([0,\infty)\times\tE;\R^{1+N+L+N}\big)$ we have
	\[
	[V_k,W]^{(0)}=\sum_{i=0}^{N+L+N} V_k^{(i)}\d_{\xi_i}W^{(0)},
	\]
	which vanishes everywhere whenever $W^{(0)}$ is constant as well. In particular, this is the case for $W\in\{V_0,\ldots,V_M\}$. Consequently, any vector field that is contained in $\cL^*$ will have a vanishing $(0)$-component. Hence, $N+L+N$ is the maximum dimension that can possibly be achieved in \eqref{eq:LH} despite the fact that $V_0,\ldots,V_M$ are $(N+L+N+1)$-dimensional.
	
	2.) Let $V,W\in C^1\big([0,\infty)\times\tE;\R^{1+N+L+N}\big)$. As
	\[
	[V,W]= V^{(0)}\d_tW- W^{(0)}\d_tV +\sum_{i=1}^{N+L+N}\left( V^{(i)}\d_{\xi_i}W- W^{(i)}\d_{\xi_i}V\right),
	\]
	any possible influence of time derivatives of $W$ is killed by a vanishing $V^{(0)}$ and vice versa. The first part of this remark then yields that no time derivatives occur explicitly in
	\begin{equation}\label{eq:lienotime}
	[V,W]=\sum_{i=1}^{N+L+N}\big( V^{(i)}\d_{\xi_i}W- W^{(i)}\d_{\xi_i}V\big) \quad \text{for all $V,W \in \cL^*$,}
	\end{equation}
	while
	\begin{equation}\label{eq:lietime}
	[V_0,W]=\d_t W+\sum_{i=1}^{N+L+N}\big( V_0^{(i)}\d_{\xi_i}W- W^{(i)}\d_{\xi_i}V_0\big) \quad \text{for all $W \in \cL^*$.}
	\end{equation}
	As $\d_t V_k$ vanishes for all $k\in\{1,\ldots,M\}$, time derivatives cannot occur before taking Lie brackets at least twice in the construction of $\cL^*$.
\end{remark}

The following notational convention is of utmost importance for understanding the rest of this section.

\begin{notation}\label{not:lie}
	1.) As noted under 1.)\ in Remark \ref{rem:lietime}, the only relevant vector field to feature a non-vanishing $(0)$-component is $V_0$ -- which is not contained in $\cL^*$. In order to simplify our notation, in the sequel \textbf{we will therefore systematically omit this component}. More precisely, we identify every vector field
	\begin{equation}\label{eq:notlie1}
	W\colon[0,\infty)\times \tE \to \R^{1+N+L+N}, \quad (t,\Phi) \mapsto \begin{pmatrix} 0 \\ W^{(1)}(t,\Phi) \\ \vdots \\ W^{(N+L+N)}(t,\Phi) \end{pmatrix},
	\end{equation}
	with the collection of vector fields on $\tE$ given by
	\begin{equation}\label{eq:notlie2}
	W(t,\cdot)\colon \tE \to \R^{N+L+N}, \quad \Phi \mapsto \begin{pmatrix} W^{(1)}(t,\Phi) \\ \vdots \\ W^{(N+L+N)}(t,\Phi) \end{pmatrix}, \quad \text{for all $t \in [0,\infty)$.}
	\end{equation}
	Either of these objects will simply be denoted by $W$. Let us stress that both directions of this identification are actively used: on the one hand, we tacitly omit vanishing zero-components when a vector field is given as in \eqref{eq:notlie1}, but on the other hand, we also think of each vector field that is given in terms of \eqref{eq:notlie2} as one on $[0,\infty)\times\tE$ with a vanishing zero-component.
	
	2.) If $V$ and $W$ are of this type, so is their Lie bracket $[V,W]$ (confer part 1.)\ of Remark \ref{rem:lietime}). Thus, it is natural (and consistent) to identify
	\begin{equation}\label{eq:notlie3}
	[V,W](t,\Phi)=[V(t,\cdot),W(t,\cdot)](\Phi) \in \R^{N+L+N}
	\end{equation}
	for all $(t,\Phi)\in[0,\infty)\times\tE$.
	
	3.) As indicated above, $V_0$ is the only relevant vector field not of the type in \eqref{eq:notlie1} and \eqref{eq:notlie2}, but we only need it when taking Lie brackets as in \eqref{eq:lietime}. Using the notation we just introduced, \eqref{eq:lietime} can be rewritten as
	\begin{equation}\label{eq:lietime+}
	[V_0,W]=\d_t W+\left[\begin{pmatrix} \hat b\\ 0_L \\ \hat b \end{pmatrix} + \begin{pmatrix} F \\ 0_N	\end{pmatrix},W\right].
	\end{equation}
	Note that this formula is completely consistent: $[V_0,W]^{(0)}$ and $\d_t W^{(0)}$ vanish again and are therefore omitted, while the Lie bracket on the right hand side is to be understood as the Lie bracket of vector fields on $\tE$ (which carry an additional dependence on time) and thus has only $N+L+N$ components to begin with.
	
	4.) Note also that with this notational convention there is no difference between $V_k$ and $\Sigma_{\cdot,k}$ for $k \in\{1,\ldots,M\}$.
	
	5.) Let us further illustrate this notation by rewriting the formulas from Lemma \ref{lem:liebasic} accordingly. With the identification of \eqref{eq:notlie1} and \eqref{eq:notlie2} and with the convention \eqref{eq:notlie3}, they read
	\begin{align*}
		\text{1. }&\left[\begin{pmatrix} A_1 \\ 0_L \\ A_1 \end{pmatrix}, \begin{pmatrix} A_2 \\ 0_L \\ A_2 \end{pmatrix}\right]\!(\xi) = \begin{pmatrix} [A_1(t,\cdot),A_2(t,\cdot)](z) \\ 0_L \\ [A_1(t,\cdot),A_2(t,\cdot)](z) \end{pmatrix},\\
		\text{2. }&\left[ a_1 \begin{pmatrix} W_1 \\ 0_N \end{pmatrix}, a_2 \begin{pmatrix} W_2 \\ 0_N \end{pmatrix}\right]\!(\xi) = a_1(t,z)a_2(t,z)\begin{pmatrix} [W_1,W_2](\phi) \\ 0_N \end{pmatrix},\\
		\text{3. }&\left[a_1 \begin{pmatrix} W_1 \\ 0_N \end{pmatrix},\begin{pmatrix} A_1 \\ 0_L \\ A_1 \end{pmatrix}\right]\!(\xi) = \displaystyle{-\sum_{j=1}^N} A_1^{(j)}(t,z)\Bigg( a_1(t,z) \begin{pmatrix} \d_{x_j}W_1(\phi) \\ 0_N \end{pmatrix} %\\ &\hspace{9cm} + 
		\d_{z_j}a_1(t,z) \begin{pmatrix} W_1(\phi) \\ 0_N \end{pmatrix}\Bigg)
	\end{align*}
	for all $\xi=(t,\phi,z)\in[0,\infty)\times\bar\cU$. 
\end{notation}

%Be careful however, as for the sake of convenience we use the same notation for $V_0$ (for which $V_0^{(0)}\equiv 1 \neq 0$), i.e.\ for all $(t,x,y,z) \in [0,\infty)\times\tE$ we write
%\[
%	V_0(t,x,y,z)= \begin{pmatrix} F(x,y) +S(t)+\tilde b(z)\\ G(x,y) \\ S(t)+\tilde b(z)	\end{pmatrix}= \begin{pmatrix} \hat b(t,z)\\ 0_L \\ \hat b(t,z) \end{pmatrix} + \begin{pmatrix} F(x,y) \\ 0_N	\end{pmatrix},
%\]
%"forgetting" the non-vanishing zero-component. However, as we noted under 2.\ in Remark \ref{rem:lietime}, this component is only relevant for time-derivatives and this situation is taken care of by formula \eqref{eq:lietime}.

Using Notation \ref{not:lie} and Lemma \ref{lem:liebasic}, we will now see which vector fields can be constructed by taking Lie brackets with $V_0,\ldots,V_M$. The first formula in Lemma \ref{lem:liebasic} yields
\begin{equation}\label{eq:lievolatility}
\cL^*\supset\langle\{V_1,\ldots,V_M\}\rangle=\left\{ \begin{pmatrix} w \\ 0_L \\ w \end{pmatrix} \,\middle|\, w \in \langle\{\sigma_{\cdot,1},\ldots,\sigma_{\cdot,M}\}\rangle \right\},
\end{equation}
i.e.\ without incorporating the drift, we can span at most $N$ dimensions. 

\begin{definition}
We say that \emph{$\sigma$ is non-degenerate in $z\in\tW$} if
\begin{equation}\label{eq:sigmanon-degenerate}
	\spann\{ w(z)\,|\, w \in \langle\{\sigma_{\cdot,1},\ldots,\sigma_{\cdot,M}\}\rangle \}= \R^N.
\end{equation}
\end{definition}

If, for example, $\sigma(z)$ is surjective (as in condition (C1) in Section \ref{sect:control}), it has to have $N$ linearly independent columns, and hence we get non-degeneracy without even taking Lie brackets.

Let $(x,y,z)\in\tE$ and assume that $\sigma$ is non-degenerate in $z$. In order to span the remaining $N+L$ dimensions, we will always start by taking the Lie bracket of the drift $V_0$ with some column $V_k$ of the diffusion matrix, $k\in\{1,\ldots,M\}$. Using \eqref{eq:lietime+} and Lemma \ref{lem:liebasic}, we see that
%\begin{align*}
%	[V_0,V_k](\xi) &= \d_t \begin{pmatrix} \sigma_{\cdot,k}(z) \\ 0_L \\ \sigma_{\cdot,k}(z) \end{pmatrix} + \left[ \begin{pmatrix} \sigma_{\cdot,k} \\ 0_L \\ \sigma_{\cdot,k} \end{pmatrix}, \begin{pmatrix} F \\ 0_N	\end{pmatrix} + \begin{pmatrix} \hat b\\	0_L \\ \hat b	\end{pmatrix} \right](\xi) \\
%	&= \begin{pmatrix} [\hat b(t,\cdot),\sigma_{\cdot,k}](z) \\ 0_L \\ [\hat b(t,\cdot),\sigma_{\cdot,k}](z) \end{pmatrix} - \sum_{i=1}^N \sigma_{i,k}(z) \d_{x_i} \begin{pmatrix} F(x,y) \\ 0_N	\end{pmatrix} \\
%	&= -\left( \begin{pmatrix} [\sigma_{\cdot,k},\hat b(t,\cdot)](z) \\ 0_L \\ [\sigma_{\cdot,k},\hat b(t,\cdot)](z) \end{pmatrix} + \sum_{i=1}^N \sigma_{i,k}(z) \d_{x_i} \begin{pmatrix} F(x,y) \\ 0_N	\end{pmatrix} \right) \\
%	&=-[V_0,V_k]
%\end{align*}
\begin{align}\label{eq:iacascadelemma1}
	\begin{split}
		[V_0,V_k](\xi) %&= \d_t \begin{pmatrix} \sigma_{\cdot,k}(z) \\ 0_L \\ \sigma_{\cdot,k}(z) \end{pmatrix} + \left[\begin{pmatrix} \hat b\\	0_L \\ \hat b	\end{pmatrix}+\begin{pmatrix} F \\ 0_N	\end{pmatrix}, \begin{pmatrix} \sigma_{\cdot,k} \\ 0_L \\ \sigma_{\cdot,k} \end{pmatrix} \right]\!(\xi) \\
		%&= 0_{N+L+N} + \left[\begin{pmatrix} \hat b\\	0_L \\ \hat b	\end{pmatrix}, \begin{pmatrix} \sigma_{\cdot,k} \\ 0_L \\ \sigma_{\cdot,k} \end{pmatrix} \right]\!(\xi) + \left[\begin{pmatrix} F \\ 0_N	\end{pmatrix},\begin{pmatrix} \sigma_{\cdot,k} \\ 0_L \\ \sigma_{\cdot,k} \end{pmatrix} \right]\!(\xi) \\
		&=\begin{pmatrix} [\hat b(t,\cdot),\sigma_{\cdot,k}](z) \\ 0_L \\ [\hat b(t,\cdot),\sigma_{\cdot,k}](z) \end{pmatrix} - \sum_{i=1}^N \sigma_{i,k}(z) \begin{pmatrix} \d_{x_i}F(x,y) \\ 0_N	\end{pmatrix}
	\end{split}
\end{align}
and hence
\begin{equation}\label{eq:iastarlemma1}
[V_k,V_0](\xi)=-[V_0,V_k](\xi)=\begin{pmatrix} [\sigma_{\cdot,k},\hat b(t,\cdot)](z) \\ 0_L \\ [\sigma_{\cdot,k},\hat b(t,\cdot)](z) \end{pmatrix} + \sum_{i=1}^N \sigma_{i,k}(z) \begin{pmatrix} \d_{x_i}F(x,y) \\ 0_N	\end{pmatrix}.
\end{equation}
In order to get a better idea of what we are dealing with here, let us see what happens when we take an extra Lie bracket with $V_0$ or some column $V_l$ of the diffusion matrix, $l\in\{1,\ldots, M\}$. Using \eqref{eq:lietime+}, Lemma \ref{lem:liebasic}, and basic properties of the Lie bracket again, we see that 

\begin{align}\label{eq:iacascadelemma2}
\begin{split}
[V_0,[V_0,V_k]](\xi)&=\d_t [V_0,V_k](\xi) + \left[\begin{pmatrix} \hat b\\	0_L \\ \hat b	\end{pmatrix}+\begin{pmatrix} F \\ 0_N	\end{pmatrix}, [V_0,V_k] \right]\!(\xi) \\
&=\begin{pmatrix} [\hat b(t,\cdot),[\hat b(t,\cdot),\sigma_{\cdot,k}]](z)+\d_t[\hat b(t,\cdot),\sigma_{\cdot,k}](z) \\ 0_L \\ [\hat b(t,\cdot),[\hat b(t,\cdot),\sigma_{\cdot,k}]](z)+\d_t[\hat b(t,\cdot),\sigma_{\cdot,k}](z) \end{pmatrix} \\ 
& \quad- \sum_{i=1}^N \Bigg( [\hat b(t,\cdot),\sigma_{\cdot,k}]^{(i)}(z) \begin{pmatrix} \d_{x_i}F(x,y) \\ 0_N	\end{pmatrix} + \sigma_{i,k}(z)\begin{pmatrix} [F,\d_{x_i}F](x,y) \\ 0_N	\end{pmatrix} \\
& \;\qquad\qquad + \sum_{j=1}^N \hat b^{(j)}(t,z)\left( \sigma_{i,k}(z) \begin{pmatrix} \d_{x_j}\d_{x_i}F(x,y) \\ 0_N	\end{pmatrix}+ \d_{z_j}\sigma_{i,k}(z) \begin{pmatrix} \d_{x_i}F(x,y) \\ 0_N	\end{pmatrix}\right) \Bigg).
\end{split}
\end{align}
Similarly, we can calculate
\begin{align}\label{eq:iastarlemma2}
	\begin{split}
		[V_l,[V_k,V_0]](\xi)%&= \left[\begin{pmatrix} \sigma_{\cdot,l} \\ 0_L \\ \sigma_{\cdot,l} \end{pmatrix},\begin{pmatrix} [\sigma_{\cdot,k},\hat b(t,\cdot)] \\ 0_L \\ [\sigma_{\cdot,k},\hat b(t,\cdot)] \end{pmatrix} + \sum_{i=1}^N \sigma_{i,k} \begin{pmatrix} \d_{x_i}F \\ 0_N	\end{pmatrix}\right]\!(\xi)\\
		%&= \left[\begin{pmatrix} \sigma_{\cdot,l} \\ 0_L \\ \sigma_{\cdot,l} \end{pmatrix},\begin{pmatrix} [\sigma_{\cdot,k},\hat b(t,\cdot)] \\ 0_L \\ [\sigma_{\cdot,k},\hat b(t,\cdot)] \end{pmatrix}\right]\!(\xi) + \sum_{i=1}^N	\left[\begin{pmatrix} \sigma_{\cdot,l} \\ 0_L \\ \sigma_{\cdot,l} \end{pmatrix}, \sigma_{i,k}\begin{pmatrix} \d_{x_i}F \\ 0_N \end{pmatrix}\right]\!(\xi) \\	
		&= \begin{pmatrix} \zeta_{k,l}(t,z) \\ 0_L \\ \zeta_{k,l}(t,z) \end{pmatrix} + \sum_{i,j=1}^N  \sigma_{j,l}(z)\Bigg(\sigma_{i,k}(z) \begin{pmatrix} \d_{x_j}\d_{x_i}F(x,y) \\ 0_N	\end{pmatrix} %\\ &\hspace{45mm}
		+	\d_{z_j}\sigma_{i,k}(z) \begin{pmatrix} \d_{x_i}F(x,y) \\ 0_N	\end{pmatrix}\Bigg),
	\end{split}
\end{align}
where
\begin{align}\label{eq:zeta}
	\begin{split}
		\zeta_{k_1}(t,z)&:=[\sigma_{\cdot,k_1},\hat b(t,\cdot)](z), \\
		\zeta_{k_1,\ldots,k_n}(t,z)&:=[\sigma_{\cdot,k_n},\zeta_{k_1,\ldots,k_{n-1}}(t,\cdot)](z) \quad \text{for all $n\ge2$,}
	\end{split}
\end{align}
with any $k_1,k_2,\ldots \in\{1,\ldots,M\}$.

Having thus acquired a basic idea of the typical structure of higher iterations
\[
	[V_0, [ \ldots, [V_0,V_k]]], [V_{k_n}, [ \ldots, [V_{k_1},V_0]]] \in\cL^*,
\]
we formulate the following two Lemmas that will help us treat these in detail later. Their proofs are straight forward, following the same line of arguments as in the above calculations of $[V_0,[V_0,V_k]]$ and $[V_l,[V_k,V_0]]$.

%KOMPLIZIERTE VARIANTE:
%\begin{lem}\label{lielemma}
%	Let $n,m\in\N$. For all $i_1,\ldots,i_m\in\{1,\ldots,N\}$ and $j\in\{1,\ldots,n\}$ consider smooth functions
%	\begin{align*}
%		\R^N \ni z &\mapsto a_{i_1,\ldots,i_m,j}(z) \in \R, \\
%		\tU\times\tV \ni (x,y) &\mapsto W_{i_1,\ldots,i_m,j}(x,y) \in \R^{N+L}, \\
%		[0,\infty)\times\R^N \ni (t,z) &\mapsto A(t,z) \in \R^N,
%	\end{align*}
%	and set
%	\[
%	W(\xi):=\begin{pmatrix} A(t,z)\\ 0_L \\ A(t,z) \end{pmatrix} + \sum_{j=1}^n \sum_{i_1,\ldots,i_m=1}^N a_{i_1,\ldots,i_m,j}(z) \begin{pmatrix} W_{i_1,\ldots,i_m,j}(x,y) \\ 0_N \end{pmatrix} \quad \text{for all $\xi\in[0,\infty)\times\tE$.}
%	\]
%	Then
%	\begin{align*}
%		[V_k,W](\xi)
%		= \begin{pmatrix} [\sigma_{\cdot,k},A(t,\cdot)](z) \\ 0_L \\ [\sigma_{\cdot,k},A(t,\cdot)](z) \end{pmatrix}  + \sum_{j=1}^n\sum_{i_1,\ldots,i_{m+1}=1}^N \Bigg[ & \sigma_{i_{m+1},k}(z) a_{i_1,\ldots,i_m,j}(z)\begin{pmatrix} \d_{x_{i_{m+1}}}W_{i_1,\ldots,i_m,j}(x,y) \\ 0_N \end{pmatrix} \\
%		+ & \sigma_{i_{m+1},k}(z) \d_{z_{i_{m+1}}}a_{i_1,\ldots,i_m,j}(z)\begin{pmatrix} W_{i_1,\ldots,i_m,j}(x,y) \\ 0_N \end{pmatrix} \Bigg]
%	\end{align*}
%	for all $\xi\in[0,\infty)\times\tE$ and any $k\in\{1,\ldots,M\}$.
%\end{lem}

\begin{lem}\label{lem:liecascade}
	If there is an open $\cU\subset\tE$ such that $W\in C^1\big(\tE,\R^{1+N+L+N}\big)$ can locally be written as 
	\[
	W(\xi)=\begin{pmatrix} A(t,z)\\ 0_L \\ A(t,z) \end{pmatrix} - \sum_{i=1}^n a_i(t,z) \begin{pmatrix} W_i(x,y) \\ 0_N \end{pmatrix} \quad \text{for all $\xi\in[0,\infty)\times\bar\cU$,}
	\]
	for suitable $n\in\N$ and smooth functions $A$, $a_i$ and $W_i$, then
	\begin{align*}
		[V_0,W](\xi)
		= & \begin{pmatrix} [\hat b(t,\cdot),A(t,\cdot)](z)+\d_t A(t,z)\\ 0_L \\ [\hat b(t,\cdot),A(t,\cdot)](z)+\d_t A(t,z) \end{pmatrix} 
		- \sum_{i=1}^n \d_ta_i(t,z) \begin{pmatrix} W_i(x,y) \\ 0_N \end{pmatrix} \\
		&- \sum_{j=1}^N A^{(j)}(t,z)\begin{pmatrix} \d_{x_j}F(x,y) \\ 0_N \end{pmatrix}
		- \sum_{i=1}^na_i(t,z) \begin{pmatrix} [F,W_i](x,y) \\ 0_N \end{pmatrix} \\
		&- \sum_{i=1}^n\sum_{j=1}^N \hat b^{(j)}(t,z)\left( a_i(t,z) \begin{pmatrix} \d_{x_j}W_i(x,y)\\ 0_N \end{pmatrix} +\d_{z_j}a_i(t,z) \begin{pmatrix}W_i(x,y)\\ 0_N \end{pmatrix} \right)
	\end{align*}
	for all $\xi\in[0,\infty)\times\bar\cU$.
\end{lem}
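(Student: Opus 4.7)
The plan is to reduce the computation to the three formulas from Lemma \ref{lem:liebasic} via bilinearity. By the key identity \eqref{eq:lietime+}, we have
\[
[V_0,W] = \partial_t W + \left[\begin{pmatrix}\hat b\\ 0_L\\ \hat b\end{pmatrix},W\right] + \left[\begin{pmatrix}F\\ 0_N\end{pmatrix},W\right],
\]
so the task splits into three contributions. The term $\partial_t W$ is immediate from the assumed form of $W$: it produces precisely the $\partial_t A$ piece in the $(x,0_L,z)$-block and the $-\sum_i \partial_t a_i (W_i,0_N)^\top$ piece, both of which appear in the claimed formula.

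Next, using bilinearity of the Lie bracket, I would expand the two remaining Lie brackets by writing $W$ as the sum of its $(A,0_L,A)^\top$-part and its $-\sum_i a_i(W_i,0_N)^\top$-parts. This produces four families of Lie brackets, each handled by exactly one of the formulas in Lemma \ref{lem:liebasic}: the bracket of $(\hat b, 0_L, \hat b)^\top$ with $(A,0_L,A)^\top$ falls under formula 1 and yields the $[\hat b(t,\cdot),A(t,\cdot)](z)$ contribution; the bracket of $(F,0_N)^\top$ with $a_i(W_i,0_N)^\top$ falls under formula 2 (taking the first ``$a$'' equal to $1$) and yields the $-\sum_i a_i [F,W_i]$ term; the bracket of $(\hat b,0_L,\hat b)^\top$ with $a_i(W_i,0_N)^\top$ falls under formula 3 (after applying antisymmetry $[V,W]=-[W,V]$) and yields the double sum involving $\hat b^{(j)}$, $\partial_{x_j}W_i$ and $\partial_{z_j}a_i$; finally, the bracket of $(F,0_N)^\top$ with $(A,0_L,A)^\top$ is again formula 3 in the degenerate case $a_1\equiv 1$, producing the $-\sum_j A^{(j)} \partial_{x_j}F$ term (the would-be $\partial_{z_j}(1)$ contribution vanishes).

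Assembling the four contributions with their correct signs and combining them with $\partial_t W$ reproduces the claimed expression termwise. The only real bookkeeping obstacle is sign management: three of the four brackets require a swap of arguments before invoking Lemma \ref{lem:liebasic} (since the formulas there are stated with the ``upper block'' in a specific slot), and one must check that the $\partial_{z_j}a_i$ terms enter with a plus sign and the $\partial_{x_j}W_i$ terms with $\hat b^{(j)}a_i$ as coefficient. Everything else is a direct template-match to Lemma \ref{lem:liebasic}, parallel to the sample computation already carried out in \eqref{eq:iacascadelemma2} for the special case $A=\hat b(t,\cdot)$, $n=1$, $a_1=\sigma_{i,k}$, $W_1=F$, which can serve as a sanity check for the general formula.
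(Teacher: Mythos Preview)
Your proposal is correct and follows exactly the route the paper indicates: the paper does not spell out a proof but says it is ``straight forward, following the same line of arguments as in the above calculations of $[V_0,[V_0,V_k]]$ and $[V_l,[V_k,V_0]]$'', and your decomposition via \eqref{eq:lietime+}, bilinearity, and the three formulas of Lemma~\ref{lem:liebasic} is precisely that. One tiny slip in your parenthetical sanity check: in \eqref{eq:iacascadelemma2} the input is $W=[V_0,V_k]$, so the correct identifications are $A=[\hat b(t,\cdot),\sigma_{\cdot,k}]$, $n=N$, $a_i=\sigma_{i,k}$, $W_i=\partial_{x_i}F$ (not $A=\hat b$, $n=1$, $W_1=F$); this does not affect your argument.
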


\begin{lem}\label{lem:liestar}
	If there is an open $\cU\subset\tE$ such that $W\in C^1\big(\tE,\R^{1+N+L+N}\big)$ can locally be written as 
	\[
	W(\xi)=\begin{pmatrix} A(t,z)\\ 0_L \\ A(t,z) \end{pmatrix} + \sum_{i=1}^n a_i(z) \begin{pmatrix} W_i(x,y) \\ 0_N \end{pmatrix} \quad \text{for all $\xi\in[0,\infty)\times\cU$.}
	\]
	for suitable $n\in\N$ and smooth functions $A$, $a_i$ and $W_i$, then
	\begin{align*}
		[V_k,W](\xi)
		= \begin{pmatrix} [\sigma_{\cdot,k},A(t,\cdot)](z) \\ 0_L \\ [\sigma_{\cdot,k},A(t,\cdot)](z) \end{pmatrix}  &+ \sum_{i=1}^n\sum_{j=1}^N \sigma_{j,k}(z)\left( a_i(z)\begin{pmatrix} \d_{x_j}W_i(x,y) \\ 0_N \end{pmatrix} +\d_{z_j}a_i(z)\begin{pmatrix} W_i(x,y) \\ 0_N \end{pmatrix}\right)
	\end{align*}
	for all $\xi\in[0,\infty)\times\bar\cU$ and any $k\in\{1,\ldots,M\}$.
\end{lem}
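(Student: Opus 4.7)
The proof will be a direct computation of the same flavour as the one leading to formula \eqref{eq:iastarlemma2}, and the plan is to reduce it entirely to Lemma \ref{lem:liebasic} via bilinearity of the Lie bracket. First I would use bilinearity to split
\[
[V_k, W] = \left[V_k, \begin{pmatrix} A \\ 0_L \\ A \end{pmatrix}\right] + \sum_{i=1}^n \left[V_k, a_i\begin{pmatrix} W_i \\ 0_N \end{pmatrix}\right],
\]
recalling that under Notation \ref{not:lie} we identify $V_k$ with the vector field $(\sigma_{\cdot,k}, 0_L, \sigma_{\cdot,k})^\top$ on $\tE$.

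For the first summand, the structure matches part 1 of Lemma \ref{lem:liebasic} with $A_1 = \sigma_{\cdot,k}$ (which carries no $t$-dependence, hence trivially fits the template) and $A_2 = A(t,\cdot)$ (the parameter $t$ simply being carried along). This produces the first block of the asserted formula. For each term in the sum over $i$, I would use the antisymmetry of the Lie bracket and then apply part 3 of Lemma \ref{lem:liebasic} with $A_1 = \sigma_{\cdot,k}$, $a_1 = a_i$, $W_1 = W_i$; the resulting expression is exactly the corresponding summand on the right-hand side of the claim. Summing over $i$ and combining with the first block gives the identity on $[0,\infty)\times\cU$, and by continuity of the $C^1$ fields involved the identity extends to $[0,\infty)\times\bar\cU$.

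There is no real obstacle here; the one place where care is needed is tracking the sign coming from antisymmetry when $V_k$ sits on the left rather than the right slot of the Lie bracket as stated in part 3 of Lemma \ref{lem:liebasic}. Compared to Lemma \ref{lem:liecascade}, this computation is strictly simpler because $V_k$ has no $t$-dependence, so the formula \eqref{eq:lietime+} does not enter and no $\d_t$-terms appear; moreover, since $\sigma_{\cdot,k}$ and the $a_i$ depend only on $z$, no $\d_{y_\ell}$-terms are produced either. This is why, in contrast to Lemma \ref{lem:liecascade}, a $[F,W_i]$-type contribution is entirely absent from the output.
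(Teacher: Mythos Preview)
Your proposal is correct and matches the paper's intended argument: the paper does not spell out a proof of this lemma but states that it is a straightforward computation along the same lines as the derivations of \eqref{eq:iacascadelemma2} and \eqref{eq:iastarlemma2}, i.e.\ bilinearity together with Lemma~\ref{lem:liebasic}, which is precisely what you do. Your remark about tracking the sign from antisymmetry when invoking part~3 of Lemma~\ref{lem:liebasic} is the only point requiring care, and you handle it correctly.
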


%Having collected these calculations for reference, we can now proceed to treat the star shape situation (Subsection \ref{sect:star}) and the cascade structure (Subsection \ref{sect:cascade}).

\subsection{Star shape}\label{sect:star}

The aim of this section is to prove the following theorem and two other variants of it that also include the case $N\neq M$ (see Theorem \ref{thm:star} and Corollary \ref{cor:starcst} below).

\begin{thm}\label{cor:stardiag}
	Let $\Phi=(x,y,z)\in\interior(\tE)$, $M=N$ and assume that $\sigma(z)$ is an invertible diagonal matrix. If for some $k_1,\ldots,k_{N+L}\in\{1,\ldots,N\}$ the vectors
	\begin{equation}\label{eq:corstardiag}
	\d_{x_{k_1}}\cdots\d_{x_{k_n}}F(x,y), \quad n\in\{1,\ldots,N+L\},
	\end{equation}
	are linearly independent, then the local H{\"o}rmander condition holds at $\Phi$.
\end{thm}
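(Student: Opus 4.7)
The plan is to decompose $\R^{N+L+N}$ into the two complementary subspaces $U_1 := \{(w, 0_L, w) : w \in \R^N\}$ (of dimension $N$) and $U_2 := \R^{N+L} \times \{0_N\}$ (of dimension $N+L$), and to show that $\cL^*(\Phi)$ exhausts each of them separately. For $U_1$, invertibility of the diagonal matrix $\sigma(z)$ makes $\sigma_{\cdot, 1}(z), \ldots, \sigma_{\cdot, N}(z)$ a basis of $\R^N$, so by \eqref{eq:lievolatility} the values $V_1(t, \Phi), \ldots, V_N(t, \Phi)$ already span $U_1$ with no Lie brackets needed.

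For the quotient $\pi \colon \R^{N+L+N} \to \R^{N+L+N}/U_1 \simeq U_2 \simeq \R^{N+L}$, I plan to exploit the iterated brackets
\[
W_I := [V_{i_m}, [V_{i_{m-1}}, [\ldots, [V_{i_1}, V_0] \ldots]]] \in \cL^*
\]
indexed by multi-indices $I = (i_1, \ldots, i_m) \in \{1, \ldots, N\}^m$ of arbitrary length $m \geq 1$. Starting from \eqref{eq:iastarlemma1} and invoking Lemma \ref{lem:liestar} inductively in $m$, the decisive structural claim is that
\[
\pi(W_I(\xi)) = \left( \prod_{j=1}^m \sigma_{i_j, i_j}(z) \right) \d_{x_I} F(x,y) + \sum_{|J| < m} c_{I, J}(z) \, \d_{x_J} F(x, y),
\]
i.e.\ the contribution in $U_2$ is a non-zero multiple of the length-$m$ derivative $\d_{x_I} F$ plus terms of strictly lower derivative order. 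Diagonality of $\sigma$ is key here: it collapses the inner sum $\sum_j \sigma_{j, i_m}(z)(\ldots)$ from Lemma \ref{lem:liestar} to the single term $j = i_m$ and thereby guarantees that only one length-$m$ index appears at each step.

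Given this decomposition, a straightforward strong induction on $m$ shows that $\d_{x_I} F \in \pi(\cL^*(\Phi))$ for every multi-index $I$ with $|I| \geq 1$ (not just those from the hypothesis): subtract off the lower-order terms, which lie in $\pi(\cL^*(\Phi))$ by the inductive hypothesis, and divide by the non-zero product $\prod_{j=1}^m \sigma_{i_j, i_j}(z)$. The linear independence hypothesis then says exactly that the specific derivatives $\d_{x_{k_1}} \cdots \d_{x_{k_n}} F$ for $n = 1, \ldots, N+L$ form a basis of $\R^{N+L}$, so all of $U_2$ lies in $\pi(\cL^*(\Phi))$. Together with $U_1 \subset \cL^*(\Phi)$ this yields $\cL^*(\Phi) = \R^{N+L+N}$, which is the local H{\"o}rmander condition.

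The main obstacle is the structural decomposition of $W_I$: one must carefully verify, in parallel with the induction on $m$, that the postulated form of $W_I$ is preserved under the bracket with $V_{i_m}$ and that the only length-$m$ derivative of $F$ produced is indeed the leading one. This is exactly where diagonality of $\sigma$ is used in an essential way; in the more general star-shape setting of Theorem \ref{thm:star}, off-diagonal entries generate competing length-$m$ summands and demand a more intricate bookkeeping.
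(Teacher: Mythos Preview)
Your proposal is correct and follows essentially the same route as the paper: the $U_1/U_2$ decomposition is exactly what the paper packages as Theorem~\ref{thm:star}, and your structural claim for $\pi(W_I)$ is precisely Lemma~\ref{lem:liestar2} specialized to diagonal $\sigma$. The only minor organizational difference is that the paper, via Remark~\ref{rem:coeff}, observes that for diagonal $\sigma$ the lower-order terms in $\bar L_{\kappa_n}$ are exactly the prefix derivatives $\d_{x_{k_1}}\cdots\d_{x_{k_l}}F$ for $l<n$, so a direct triangular elimination on the $N+L$ paths $\kappa_n=(k_1,\ldots,k_n)$ suffices, whereas you run a strong induction over all multi-indices; both reach the same conclusion.
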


Our proof strategy is to span all dimensions with vector fields that are constructed in the following fashion: Let $l\in\N$ and $\kappa=(k_1,\ldots,k_l) \in\{1,\ldots,M\}^l$ and define
\[%\begin{equation}\label{eq:starconstruct}
L_{\kappa,1}:=[V_{k_1},V_0] \quad \text{and} \quad L_{\kappa,n}:=[V_{k_n},L_{\kappa,n-1}] \quad \text{for all $n\in\{2,\ldots,l\}$}.
\]%\]\end{equation}
When this recursive procedure is finished, we have acquired the vector field $L_{\kappa,l}\in\cL^*$. As this is the result of successively taking Lie brackets with vector fields that follow the "path" $\kappa=(k_1,\ldots,k_l)$ through the columns of the diffusion matrix, we will use the notation $L_\kappa:=L_{\kappa,l}$. Our goal is to find verifiable criteria under which for some sequence $\kappa_1,\ldots,\kappa_{N+L}$ of such paths (of possibly different lengths) and for some 
\[
W_1,\ldots,W_N \in \langle\{V_1,\ldots,V_M\}\rangle,
\]
the span of the vector fields
\[
W_1,\ldots,W_N,L_{\kappa_1}, \ldots, L_{\kappa_{N+L}} \in\cL^*,
\]
evaluated at a suitable point $\xi\in[0,\infty)\times\tE$ has dimension $N+L+N$. To this end, we will have to calculate $L_\kappa$ for a general path
\[
\kappa=(k_1,\ldots,k_{l(\kappa)})\in \bigcup_{l\in\N}\{1,\ldots,M\}^l,
\]
where we write $l(\kappa)$ for the path length of $\kappa$. Thanks to Lemma \ref{lem:liestar}, these calculations are not too hard. Recall the definition of the functions $\zeta_{k_1,\ldots,k_n}$ that was given in \eqref{eq:zeta}.

\begin{lem}\label{lem:liestar2}
	For any $\kappa=(k_1,\ldots,k_l)\in\{1,\ldots,M\}^l$, $n\in\{1,\ldots,l\}$, and $\xi\in[0,\infty)\times\tE$, we have
	\begin{align}\label{eq:L}
		\begin{split}
			L_{\kappa,n}(\xi)=\begin{pmatrix} \zeta_{k_1,\ldots,k_n}(t,z) \\ 0_L \\ \zeta_{k_1,\ldots,k_n}(t,z) \end{pmatrix}
			&+ \sum_{i_1,\ldots,i_n=1}^N \sigma_{i_1,k_1}(z)\cdots\sigma_{i_n,k_n}(z)\begin{pmatrix} \d_{x_{i_1}}\cdots\d_{x_{i_n}} F(x,y) \\ 0_N \end{pmatrix} \\
			&+ \sum_{\alpha \in \N_0^N, 1\le \abs{\alpha}_1 \le n-1} p_{\kappa,n,\alpha}(z) \begin{pmatrix} \d_x^\alpha F(x,y) \\ 0_N \end{pmatrix},
		\end{split}
	\end{align}
	where each coefficient function $p_{\kappa,n,\alpha}$ is a polynomial expression of the terms
	\[
	\d_z^\beta \sigma_{i,k_j} \quad \text{with } \beta\in\N_0^N, \, \abs{\beta}_1\le\abs{\alpha}_1, \, i\in\{1,\ldots,N\}, \, j\in\{1,\ldots,n\},
	\]
	where $\abs{\alpha}_1:=\alpha_1+\ldots+\alpha_N$ for all $\alpha\in\N_0^N$.
\end{lem}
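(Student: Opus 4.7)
The natural approach is induction on the path length $n$. The base case $n=1$ follows immediately from equation \eqref{eq:iastarlemma1}, which already computes $L_{\kappa,1} = [V_{k_1}, V_0]$: the top-and-bottom part coincides with $\zeta_{k_1}(t,z) = [\sigma_{\cdot,k_1},\hat b(t,\cdot)](z)$ by definition \eqref{eq:zeta}, the leading sum reads $\sum_{i_1} \sigma_{i_1,k_1}(z)\,\d_{x_{i_1}} F$, and the residual range $1\le|\alpha|_1\le 0$ is empty.

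For the inductive step, I would match $L_{\kappa,n-1}$ with the hypothesis of Lemma \ref{lem:liestar}: take $A(t,\cdot) = \zeta_{k_1,\ldots,k_{n-1}}(t,\cdot)$ and let the remaining $\d_x$-derivatives of $F$ play the roles of the vector fields $W_i$, with coefficients $a_i$ given either by the $(n-1)$-fold $\sigma$-product $\sigma_{i_1,k_1}\cdots\sigma_{i_{n-1},k_{n-1}}$ (for the leading terms) or by the polynomials $p_{\kappa,n-1,\alpha'}$ (for the residual terms with $1\le|\alpha'|_1\le n-2$). Applying Lemma \ref{lem:liestar} with $k=k_n$ then yields three kinds of contributions: the new $\zeta$-part $[\sigma_{\cdot,k_n},\zeta_{k_1,\ldots,k_{n-1}}(t,\cdot)](z)$, which is $\zeta_{k_1,\ldots,k_n}(t,z)$ by the recursive definition \eqref{eq:zeta}; terms of the form $\sigma_{j,k_n}(z)\,a_i(z)\,\d_{x_j}W_i$, which raise $|\alpha|_1$ by one; and terms of the form $\sigma_{j,k_n}(z)\,\d_{z_j}a_i(z)\cdot W_i$, which preserve $|\alpha|_1$ but differentiate the coefficient in $z$. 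Collecting the first type that originates from the $(n-1)$-fold leading $\sigma$-product produces exactly the new leading sum $\sum_{i_1,\ldots,i_n}\sigma_{i_1,k_1}\cdots\sigma_{i_n,k_n}\,\d_{x_{i_1}}\cdots\d_{x_{i_n}}F$; all remaining summands are of order $1\le|\alpha|_1\le n-1$ and may be regrouped into the new coefficient functions $p_{\kappa,n,\alpha}(z)$.

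The main bookkeeping task, and the only real obstacle, is verifying the claim about the polynomial structure of the $p_{\kappa,n,\alpha}$. The $\d_{x_j}$-type contributions simply transport $p_{\kappa,n-1,\alpha'}$ to the index $\alpha=\alpha'+e_j$ and attach the zero-derivative factor $\sigma_{j,k_n}$, so the induction hypothesis passes through unchanged. The $\d_{z_j}$-type contributions are more delicate: they send $p_{\kappa,n-1,\alpha'}$ to the same index $\alpha=\alpha'$ multiplied by $\sigma_{j,k_n}\,\d_{z_j}p_{\kappa,n-1,\alpha'}$, and an application of the product rule redistributes one extra $z$-derivative across the many $\sigma$-factors in the polynomial. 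Tracking how these extra derivatives accumulate relative to $|\alpha|_1$ along the recursion is what has to be organized carefully. Since no time derivatives appear in Lemma \ref{lem:liestar} and both $\sigma$ and the $p_{\kappa,n-1,\alpha'}$ depend only on $z$, the new $p_{\kappa,n,\alpha}$ are automatically $t$-independent polynomials in the quantities $\d_z^\beta \sigma_{i,k_j}$, which completes the induction.
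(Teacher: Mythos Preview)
Your approach is essentially the paper's: induction on $n$ with base case \eqref{eq:iastarlemma1}, inductive step via Lemma \ref{lem:liestar}, recovering the $\zeta$-part from the recursion \eqref{eq:zeta}, extracting the new leading $n$-fold $\sigma$-product, and absorbing the remainder into the $p_{\kappa,n,\alpha}$. The paper in fact dismisses the bookkeeping you flag as delicate --- the $|\beta|_1\le|\alpha|_1$ bound under the $\d_{z_j}$-contributions --- with the word ``trivially'', so your caution there is at least as honest as the original.
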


%Let us remark that in particular this Lemma provides the desired representation of $L_\kappa=L_{\kappa,l}$. 

\begin{proof}%[Proof of Lemma \ref{lem:liestar2}]
	Fix some $\kappa=(k_1,\ldots,k_l)\in\{1,\ldots,M\}^l$. We want to prove this Lemma by induction, so at first we notice that the cases $n=1$ and $n=2$ were already checked in \eqref{eq:iastarlemma1} and \eqref{eq:iastarlemma2}. Assume now that this Lemma's claim holds for some $n\in\{1,\ldots,l-1\}$. Then Lemma \ref{lem:liestar} yields
	\begin{align*}
		L_{\kappa,n+1}(\xi)&=\begin{pmatrix} \zeta_{k_1,\ldots,k_{n+1}}(t,z) \\ 0_L \\ \zeta_{k_1,\ldots,k_{n+1}}(t,z) \end{pmatrix}
		%\\ &\hspace{5mm}
		+ \sum_{i_1,\ldots,i_{n+1}=1}^N \sigma_{i_1,k_1}(z)\cdots\sigma_{i_{n+1},k_{n+1}}(z)\begin{pmatrix} \d_{x_{i_1}}\cdots\d_{x_{i_{n+1}}} F(x,y) \\ 0_N \end{pmatrix} \\
		&\hspace{5mm}+ \sum_{i_1,\ldots,i_{n+1}=1}^N \sigma_{i_{n+1},k_{n+1}}(z)\d_{z_{i_{n+1}}}[\sigma_{i_1,k_1}(z)\cdots\sigma_{i_n,k_n}(z)]\begin{pmatrix} \d_{x_{i_1}}\cdots\d_{x_{i_n}} F(x,y) \\ 0_N \end{pmatrix}\\
		&\hspace{5mm}+ \sum_{\alpha \in \N_0^N, 1\le \abs{\alpha}_1 \le n-1}\sum_{i=1}^N \sigma_{i,k_{n+1}}(z)p_{\kappa,n,\alpha}(z)\begin{pmatrix} \d_{x_i}\d_x^\alpha F(x,y) \\ 0_N \end{pmatrix} \\
		&\hspace{5mm}+ \sum_{\alpha \in \N_0^N, 1\le \abs{\alpha}_1 \le n-1}\sum_{i=1}^N  \sigma_{i,k_{n+1}}(z)\d_{z_i}p_{\kappa,n,\alpha}(z)\begin{pmatrix} \d_x^\alpha F(x,y) \\ 0_N \end{pmatrix},
	\end{align*}
	where the summands in the first line are already exactly of the shape we are aiming for. All of the other terms correspond to derivatives of $F$ that are of the order $n$ at most. By the induction hypothesis, each $p_{\kappa,n,\alpha}$ is a polynomial expression of the terms $\d_z^\beta \sigma_{i,k_j}$ with $\beta\in\N_0^N$, $\abs{\beta}_1\le\abs{\alpha}_1$, $i\in\{1,\ldots,N\}$, $j\in\{1,\ldots,n\}$. Trivially, the same is then true for any partial derivative $\d_{z_i}p_{\kappa,n,\alpha}$ and hence all of the respective coefficients in the above formula are polynomial expressions of $\d_z^\beta \sigma_{i,k_j}$ with $i\in\{1,\ldots,N\}$, $j\in\{1,\ldots,n+1\}$ and $\beta\in\N_0^N$ with $\abs{\beta}_1$ bounded by the order of the respective derivative of $F$. Thus, $L_{\kappa,n+1}(\xi)$ is indeed of the desired form, which completes the proof.
\end{proof}

\begin{remark}\label{rem:coeff}
	Having a closer look at the formula in the proof of Lemma \ref{lem:liestar2}, we note that the coefficients $p_{\kappa,n,\alpha}$ can be calculated by the following scheme:
	\begin{enumerate}
		\item For all $n\in\N$, define
		\[
		p_{\kappa,n,0_N}:=0
		\]
		and
		\[
		p_{\kappa,n,\alpha}:=\prod_{j=1}^n \sigma_{i_j,k_j}
		\]
		for all $\alpha\in\N_0^N$ with $\abs{\alpha}_1=n$ and $\d_x^\alpha=\d_{x_{i_1}}\cdots\d_{x_{i_n}}$. %These functions do not actually occur in the expression \eqref{eq:L}, but we need them for convenience in the next step.
		\item Now we can recursively calculate
		\[
		p_{\kappa,n,\alpha}=\sum_{i=1}^N\sigma_{i,k_{n}}\big(\d_{z_i} p_{\kappa,n-1,\alpha} + p_{\kappa,n-1,\alpha-e_i}\cdot 1_{\alpha_i\neq 0} \big)
		\]
		for all $n\ge 2$ and $\alpha\in\N_0^N$ with $\abs{\alpha}_1\in\{1,\ldots,n-1\}$, where $e_i$ denotes the $i$-th canonical unit vector in $\R^N$.
	\end{enumerate}
\end{remark}

Let us now consider a finite sequence of paths
\[
\kappa_1,\ldots,\kappa_{N+L} \in \bigcup_{l\in\N}\{1,\ldots,M\}^l
\]
and the corresponding sequence of vector fields $L_{\kappa_1},\ldots,L_{\kappa_{N+L}} \in \cL^*$ as defined in \eqref{eq:L}. Let $\xi=(t,x,y,z)\in[0,\infty)\times\tE$ and assume that $\sigma$ is non-degenerate in $z$, i.e.\ there are
\[
w_1,\ldots,w_N\in\langle\{\sigma_{\cdot,1},\ldots,\sigma_{\cdot,M}\}\rangle
\]
such that $w_1(z),\ldots,w_N(z)\in\R^N$ are linearly independent. Setting
\[
W_1:=\begin{pmatrix}w_1\\0_L \\w_1\end{pmatrix},\ldots,W_N:=\begin{pmatrix}w_N\\0_L \\w_N\end{pmatrix}\in\langle\{V_1,\ldots,V_M\}\rangle,
\]
we want to find a sufficient condition for
\[
W_1(z),\ldots,W_N(z),L_{\kappa_1}(\xi),\ldots,L_{\kappa_{N+L}}(\xi)
\]
to be linearly independent. We first note that for each $\kappa_n=(k_{n,1},\ldots,k_{n,l(\kappa_n)})$ the term
\[
\zeta_{k_{n,1},\ldots,k_{n,l(\kappa_n)}}(t,z) \in \R^N
\]
can of course be expressed as a linear combination of the linearly independent vectors $w_1(z),\ldots,w_N(z)\in\R^N$, and consequently the first summand
\[
\begin{pmatrix} \zeta_{k_{n,1},\ldots,k_{n,l(\kappa_n)}}(t,z) \\ 0_L \\ \zeta_{k_{n,1},\ldots,k_{n,l(\kappa_n)}}(t,z) \end{pmatrix}\in\R^{N+L+N}
\]
of $L_{\kappa_n}(\xi)$ is a linear combination of $W_1(z),\ldots,W_N(z)\in\R^{N+L+N}$. All of its other summands are zero in their last $N$ components. Therefore, the vectors
\[
W_1(z),\ldots,W_N(z), L_{\kappa_1}(\xi),\ldots,L_{\kappa_{N+L}}(\xi)\in\R^{N+L+N}
\]
are linearly independent if and only if
\[
\bar L_{\kappa_1}(\xi),\ldots,\bar L_{\kappa_{N+L}}(\xi)\in\R^{N+L}
\]
are, where
\begin{align}\label{eq:Lbar}
	\begin{split}
		\bar L_{\kappa_n}(\xi)&= \sum_{i_1,\ldots,i_{l(\kappa_n)}=1}^N \sigma_{i_1,k_{n,1}}(z)\cdots\sigma_{i_{l(\kappa_n)},k_{n,l(\kappa_n)}}(z)\d_{x_{i_1}}\cdots\d_{ x_{i_{l(\kappa_n)}}}F(x,y) \\
		&\hspace{5mm}+ \sum_{\alpha \in \N_0^N, 1\le \abs{\alpha} \le l(\kappa_n)-1} p_{\kappa_n,l(\kappa_n),\alpha}(z)\d_x^\alpha F(x,y)
	\end{split}
\end{align}
for every $n\in\{1,\ldots,N+L\}$. Note that these vector fields in fact no longer depend on time, which is why we can simply write
\[
\bar L_{\kappa_n}(x,y,z)=\bar L_{\kappa_n}(\xi).
\]
The following theorem collects the insight we have gained so far. Note that the signal (the only time-dependent object present) is not part of any of the occurring vector fields, which is why uniformity in time (as required for the local H{\"o}rmander condition) is entirely unproblematic.

\begin{thm}\label{thm:star}
	Let $\Phi=(x,y,z)\in\interior(\tE)$ and assume that the following two conditions hold.
	\begin{enumerate}
		\item[(i)] $\sigma$ is non-degenerate in $z$.
		\item[(ii)] There are
		\[
		\kappa_1,\ldots,\kappa_{N+L} \in \bigcup_{l\in\N}\{1,\ldots,M\}^l
		\]
		such that
		\[
		\bar L_{\kappa_1}(\Phi), \ldots, \bar L_{\kappa_{N+L}}(\Phi) \quad \text{are linearly independent.}
		\]
	\end{enumerate}
	Then the local H{\"o}rmander condition holds at $\Phi$.
\end{thm}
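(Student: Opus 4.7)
The plan is to exhibit an explicit family of $N+L+N$ vector fields in $\mathcal{L}^*$ whose values at $(t,\Phi)$ span $\mathbb{R}^{N+L+N}$ uniformly in $t$; essentially, all the ingredients are already assembled in the excerpt and only a clean linear-independence argument remains.

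First, I would use condition (i) to pick $w_1,\dots,w_N\in\langle\{\sigma_{\cdot,1},\dots,\sigma_{\cdot,M}\}\rangle$ with $w_1(z),\dots,w_N(z)$ a basis of $\mathbb{R}^N$, and form the vector fields $W_i:=(w_i,0_L,w_i)^{\!\top}\in\mathcal{L}^*$ (these sit in $\langle\{V_1,\dots,V_M\}\rangle\subset\mathcal{L}^*$ by \eqref{eq:lievolatility}). Next I use condition (ii) to pick the paths $\kappa_1,\dots,\kappa_{N+L}$ and form the iterated Lie brackets $L_{\kappa_n}\in\mathcal{L}^*$ as defined before Lemma \ref{lem:liestar2}. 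The candidate spanning set is
\[
W_1,\dots,W_N,\,L_{\kappa_1},\dots,L_{\kappa_{N+L}}\in\mathcal{L}^*.
\]

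The key block-structure observation is the one already laid out just before Theorem \ref{thm:star}: by Lemma \ref{lem:liestar2}, each $L_{\kappa_n}(t,\Phi)$ decomposes as
\[
L_{\kappa_n}(t,\Phi)=\begin{pmatrix}\zeta_{\kappa_n}(t,z)\\ 0_L\\ \zeta_{\kappa_n}(t,z)\end{pmatrix}+\begin{pmatrix}\bar L_{\kappa_n}(\Phi)\\ 0_N\end{pmatrix},
\]
where $\bar L_{\kappa_n}(\Phi)\in\mathbb{R}^{N+L}$ is the time-independent vector given by \eqref{eq:Lbar}. Since $w_1(z),\dots,w_N(z)$ is a basis of $\mathbb{R}^N$, for each $t$ and each $n$ there exist unique scalars $\lambda_{n,i}(t,z)$ such that $\zeta_{\kappa_n}(t,z)=\sum_{i=1}^N\lambda_{n,i}(t,z)\,w_i(z)$. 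Subtracting $\sum_i\lambda_{n,i}(t,z)W_i$ from $L_{\kappa_n}(t,\Phi)$ simultaneously annihilates the $\zeta$-contribution in the bottom $N$-block and in the top $N$-block (because $W_i$ has the same vector in top and bottom blocks), leaving
\[
\tilde L_{\kappa_n}(\Phi):=L_{\kappa_n}(t,\Phi)-\sum_{i=1}^N\lambda_{n,i}(t,z)W_i(t,\Phi)=\begin{pmatrix}\bar L_{\kappa_n}(\Phi)\\ 0_N\end{pmatrix}\in\mathbb{R}^{N+L+N},
\]
which is independent of $t$.

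Finally, the span of $\{W_1(t,\Phi),\dots,W_N(t,\Phi),L_{\kappa_1}(t,\Phi),\dots,L_{\kappa_{N+L}}(t,\Phi)\}$ coincides with the span of $\{W_1(z),\dots,W_N(z),\tilde L_{\kappa_1}(\Phi),\dots,\tilde L_{\kappa_{N+L}}(\Phi)\}$, so it is enough to verify linear independence of this cleaned-up list. Any relation $\sum_i\mu_iW_i(z)+\sum_n\nu_n\tilde L_{\kappa_n}(\Phi)=0$ read off in the bottom $N$-block gives $\sum_i\mu_iw_i(z)=0$, hence $\mu_i=0$ by (i); then the top $N+L$ blocks force $\sum_n\nu_n\bar L_{\kappa_n}(\Phi)=0$, hence $\nu_n=0$ by (ii). The $N+(N+L)$ vectors are therefore linearly independent for every $t\in[0,\infty)$, establishing the local Hörmander condition at $\Phi$.

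The only real subtlety to watch for is the uniformity in time required by Definition \ref{def:lwh}: the coefficients $\lambda_{n,i}(t,z)$ above do depend on $t$, but the crucial point is that the post-subtraction vectors $\tilde L_{\kappa_n}(\Phi)$ do not, so condition (ii) — a purely spatial hypothesis — suffices to yield independence for all $t$. Everything else is bookkeeping on the three-block structure already encoded in Lemma \ref{lem:liestar2}.
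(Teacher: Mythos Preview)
Your proposal is correct and follows essentially the same approach as the paper: the paper's proof simply refers back to the block-structure reasoning laid out immediately before the theorem (using the $W_i$ from non-degeneracy and the decomposition of $L_{\kappa_n}$ from Lemma \ref{lem:liestar2}), which is exactly what you have written out in detail. Your explicit subtraction step producing the time-independent $\tilde L_{\kappa_n}$ and the two-block linear-independence argument are just a cleaner, more self-contained spelling-out of the paper's ``as we have reasoned above.''
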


\begin{proof}
	As we have reasoned above, the assumptions of this theorem imply that there are $W_1,\ldots,W_N\in\langle\{V_1,\ldots,V_M\}\rangle\subset\cL^*$ such that the span of
	\[
	W_1(z),\ldots,W_N(z),L_{\kappa_1}(t,\Phi), \ldots, L_{\kappa_{N+L}}(t,\Phi)
	\]
	has the dimension $N+L+N$ for all $t\in[0,\infty)$. Since by construction $L_{\kappa_1}, \ldots, L_{\kappa_{N+L}}$ are also in $\cL^*$, it follows that the local H{\"o}rmander condition holds at $\Phi$.
\end{proof}

In general, the assumptions of Theorem \ref{thm:star} can turn out to be still very hard to verify, since the vector fields in \eqref{eq:Lbar} have a rather complicated structure. However, we can present certain interesting situations in which they can be radically simplified.

\begin{cor}\label{cor:starcst}
	Let $\Phi=(x,y,z)\in\interior(\tE)$ and $\kappa_1,\ldots,\kappa_{N+L} \in \bigcup_{l\in\N}\{1,\ldots,M\}^l$.
	\begin{enumerate}
		\item[(a)]
		Assume that in a neighborhood of $z$, the diffusion matrix $\sigma$ is constant and its only value is a surjective matrix. If the vectors
		\[
		\sum_{i_1,\ldots,i_{l(\kappa_n)}=1}^N \sigma_{i_1,k_{n,1}}\cdots\sigma_{i_{l(\kappa_n)},k_{n,l(\kappa_n)}}\d_{x_{i_1}}\cdots\d_{ x_{i_{l(\kappa_n)}}}F(x,y), \quad n \in\{1,\ldots,N+L\},
		\]
		are linearly independent, then the local H{\"o}rmander condition holds at $\Phi$.
		\item[(b)]
		Let $M=N$ and assume that in a neighborhood of $z$, the diffusion matrix $\sigma$ is constant and that its only value is an invertible diagonal matrix. If the vectors
		\[
		\d_{x_{k_{n,1}}}\cdots\d_{ x_{k_{n,l(\kappa_n)}}}F(x,y), \quad n \in\{1,\ldots,N+L\},
		\]
		are linearly independent, then the local H{\"o}rmander condition holds at $\Phi$.
	\end{enumerate}
\end{cor}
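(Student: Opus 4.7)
The strategy is to deduce both parts from Theorem~\ref{thm:star} by showing that, under the local constancy assumption on $\sigma$, the intricate expression for $\bar L_{\kappa_n}$ in \eqref{eq:Lbar} collapses to its top-order term. Non-degeneracy in the sense of \eqref{eq:sigmanon-degenerate} is immediate in both settings: a surjective matrix has $N$ linearly independent columns, so condition (i) of Theorem~\ref{thm:star} is satisfied at $z$ without even invoking Lie brackets. The whole content of the corollary therefore lies in verifying condition (ii).

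The key step is the following claim: if $\sigma$ is constant in a neighborhood of $z$, then
\[
p_{\kappa,n,\alpha}(z)=0 \quad \text{for every } \kappa,\ n\in\N, \text{ and } \alpha\in\N_0^N \text{ with } \abs{\alpha}_1 \le n-1.
\]
I would prove this by induction on $n$ using the recursion from Remark~\ref{rem:coeff}. The base case $n=1$ is the initialization $p_{\kappa,1,0_N}=0$. For the inductive step, if $\abs{\alpha}_1=0$ there is nothing to show; if $1\le\abs{\alpha}_1\le n-1$, the recursion
\[
p_{\kappa,n,\alpha}=\sum_{i=1}^N\sigma_{i,k_{n}}\bigl(\d_{z_i} p_{\kappa,n-1,\alpha} + p_{\kappa,n-1,\alpha-e_i}\cdot 1_{\alpha_i\neq 0}\bigr)
\]
kills the first summand locally, since $p_{\kappa,n-1,\alpha}$ is a polynomial expression in entries and $z$-derivatives of $\sigma$, all of which are locally constant near $z$; the second summand vanishes by the induction hypothesis, since $\abs{\alpha-e_i}_1=\abs{\alpha}_1-1\le n-2$. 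With this claim in hand, the formula \eqref{eq:Lbar} reduces locally to
\[
\bar L_{\kappa_n}(x,y,z)=\sum_{i_1,\ldots,i_{l(\kappa_n)}=1}^N \sigma_{i_1,k_{n,1}}\cdots\sigma_{i_{l(\kappa_n)},k_{n,l(\kappa_n)}}\d_{x_{i_1}}\cdots\d_{x_{i_{l(\kappa_n)}}}F(x,y),
\]
which is precisely the vector appearing in (a). Hypothesis (a) thus gives condition (ii) of Theorem~\ref{thm:star}, yielding the local H\"ormander condition at $\Phi$.

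For part (b), I would invoke (a) and exploit the diagonal structure: if $\sigma$ is a constant invertible diagonal matrix, then $\sigma_{i,j}=0$ unless $i=j$, so in the simplified sum above only the single multi-index $(i_1,\ldots,i_{l(\kappa_n)})=(k_{n,1},\ldots,k_{n,l(\kappa_n)})$ contributes, giving
\[
\bar L_{\kappa_n}(x,y,z)=\Bigl(\prod_{j=1}^{l(\kappa_n)}\sigma_{k_{n,j},k_{n,j}}\Bigr)\,\d_{x_{k_{n,1}}}\cdots\d_{x_{k_{n,l(\kappa_n)}}}F(x,y).
\]
Since the diagonal entries are nonzero, these vectors are linearly independent exactly when the vectors in (b) are, which again yields condition (ii) and finishes the proof.

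The main obstacle is the vanishing claim for the lower-order coefficients $p_{\kappa,n,\alpha}(z)$: the statement of Lemma~\ref{lem:liestar2} alone is too weak, because it allows undifferentiated entries of $\sigma$ (i.e. $\beta=0_N$) to appear in $p_{\kappa,n,\alpha}$, and these do not disappear merely because $\sigma$ is constant. It is only by inspecting the explicit recursion from Remark~\ref{rem:coeff} and performing the induction above that one sees the undifferentiated $\sigma$-factors are always paired with a lower-order $p_{\kappa,n-1,\alpha-e_i}$ that is itself zero, so the whole coefficient vanishes. Once this combinatorial point is settled, the remaining work is essentially bookkeeping.
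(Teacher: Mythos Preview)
Your proof is correct and follows the same route as the paper's own argument: reduce to Theorem~\ref{thm:star}, note that surjectivity of $\sigma$ gives condition (i), and use Remark~\ref{rem:coeff} to see that the lower-order coefficients $p_{\kappa,n,\alpha}$ vanish when $\sigma$ is locally constant, so that $\bar L_{\kappa_n}$ collapses to its top-order term; part (b) then follows from (a) by the diagonal structure. The paper merely asserts the vanishing of the $p_{\kappa,n,\alpha}$ as a consequence of Remark~\ref{rem:coeff}, whereas you spell out the induction and correctly identify that the polynomial description in Lemma~\ref{lem:liestar2} alone (which allows undifferentiated $\sigma$-entries) is not enough and that the recursive structure of Remark~\ref{rem:coeff} is really needed.
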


\begin{proof}
	Since (b) follows from (a), we only have to prove (a). As the diffusion matrix takes only one value $\sigma\in\R^{M\times N}$, condition (i) of Theorem \ref{thm:star} is in fact equivalent to surjectivity of this matrix. Furthermore, it follows from Remark \ref{rem:coeff} that for constant $\sigma$ all of the coefficients $p_{\kappa_n,l(\kappa_n),\alpha}$ with $1\le\abs{\alpha}_1\le l(\kappa_n)-1$ vanish. Therefore, $\bar L_{\kappa_1},\ldots,\bar L_{\kappa_{N+L}}$ can be simplified to the respective expressions given in this Corollary. Their linear independence yields condition (ii) of Theorem \ref{thm:star}, which can then be applied to complete the proof.
\end{proof}

With Theorem \ref{thm:star}, we can also handle the

\begin{proof}[Proof of Theorem \ref{cor:stardiag}]
	Once again, we want to apply Theorem \ref{thm:star}. By invertibility of $\sigma(z)$, all of its columns are linearly independent, and hence condition (i) is trivially fulfilled.
	
	In order to check condition (ii), we set
	\begin{equation}\label{eq:starpath}
	\kappa_n:=(k_1,\ldots,k_n) \quad \text{for every $n\in\{1,\ldots,N+L\}$.}
	\end{equation}
	The vector field $\bar L_{\kappa_n}$ can be calculated by the formula \eqref{eq:Lbar}, and since $\sigma(z)$ is a diagonal matrix, the first sum is reduced to just the one summand
	\[
	\sigma_{k_1,k_1}(z)\cdots\sigma_{k_n,k_n}(z)\d_{x_{k_1}}\cdots\d_{x_{k_n}}F(x,y),
	\]
	while the second sum is reduced to
	\[
	\sum_{l=1}^{n-1} p_{\kappa_n,n,\alpha(l)}(z) \d_{x_{k_1}}\cdots\d_{x_{k_l}} F(x,y)
	\]
	with $\alpha(l):=e_{k_1}+\ldots+e_{k_l}$, as follows immediately from Remark \ref{rem:coeff}. Invertibility of $\sigma(z)$ yields that $\sigma_{k_1,k_1}(z)\cdots\sigma_{k_n,k_n}(z)$ can never be zero, so none of the leading coefficients of $\bar L_{\kappa_n}(\Phi)$ vanish. Hence, starting with $n=1$, we can successively add multiples of $\bar L_{\kappa_n}(\Phi)$ to $\bar L_{\kappa_{n+1}}(\Phi)$ in order to eliminate any summand in $\bar L_{\kappa_{n+1}}(\Phi)$ with derivatives of the order less than $n+1$. The resulting vectors are clearly linearly independent if and only if $\bar L_{\kappa_1}(\Phi), \ldots,\bar L_{\kappa_{N+L}}(\Phi)$ are. This leads to the conclusion that linear independence of
	\[
	\sigma_{k_1,k_1}(z)\cdots\sigma_{k_n,k_n}(z)\d_{x_{k_1}}\cdots\d_{x_{k_n}}F(x,y), \quad n\in\{1,\ldots,N+L\},
	\]
	is sufficient for condition (ii) of Theorem \ref{thm:star}. As leaving out scaling factors has no effect on linear independence, the proof is completed.
\end{proof}

\begin{example}\label{ex:HHlwh}
	For $N=M=1$ and locally positive $\sigma$, Theorem \ref{cor:stardiag} contains Theorem 3 of \cite{HLT2} as a special case. In particular, it covers the Hodgkin--Huxley model, as is calculated in detail in Section 5.3 of \cite{HLT2}.
\end{example}

\begin{example}\label{ex:TOYlwhstar}
	In continuation of Example \ref{ex:TOYnonzeroinput}, we consider the toy model from Example \ref{ex:toy} for $N=2$, $L=1$ and $g$ as in \eqref{eq:toyg1}. In Example \ref{ex:TOYnonzeroinput}, we showed in particular that (for $T\in\Q$) the point $\Phi=(1,0,0,z)$ is $T$-attainable for any $z\in\R^2$. Straight forward calculations show that Theorem \ref{cor:stardiag} with $(k_1,k_2,k_3)=(1,2,2)$ yields the local H{\"o}rmander condition in $\Phi$ when $\sigma(z)\in\R^{2\times2}$ is an invertible diagonal matrix. If $M\neq N$ and for instance
	\[
		\sigma\equiv\begin{pmatrix} 1 & 1 & 2\\ 0 & 1 & 1\end{pmatrix} \quad \text{in some open set $\cU\subset\R^2$,}
	\]
	we can check the local H{\"o}rmander condition in $\Phi$ by taking $\kappa_1=(1)$, $\kappa_2=(1,2)$ and $\kappa_3=(2,2)$ and applying Corollary \ref{cor:starcst}. Since, as noted in Remark \ref{rem:lyapunov}, there is also a Lyapunov function in this setting, this means that for either choice of $\sigma$, all of the assumptions of Theorem \ref{thm:HLT} are satisfied and the solution to the corresponding SDE \eqref{eq:SDE} is positive Harris recurrent (even exponentially ergodic).
\end{example}

\subsection{Cascade structure}\label{sect:cascade}

For this section, we will stick to the case $M=N=1$. Since $\sigma$ is now $\R$-valued, we have
\begin{equation}\label{eq:sigmapos}
\tB:=\{ z\in\tW \,|\, \text{$\sigma$ is non-degenerate in $z\in\tW$} \}=\{ z\in\tW \,|\, \sigma(z)\neq 0 \}\subset\tW\subset \R.
\end{equation}
We tacitly assume that $\tB$ is non-empty, since otherwise our model would be devoid of any randomness. Moreover, we fix an open set $\tA\subset\tU\times\tV$ and write
\[
\tE^*:=\tA\times\tB\subset \tE\subset\R^{1+L+1}.
\]
Let us first state this section's main result.

\begin{thm}\label{thm:cascade}
	Consider the following conditions.
	\begin{enumerate}
		\item[\emph{\textbf{(H1)}}] In $\tA$ we have $\d_x g_1, \d_{y_1}g_2, \d_{y_2}g_3,\ldots,\d_{y_{L-1}}g_L \neq 0$ and $\d_{y_i}g_j=0 \text{ for all } i \in\{1,\ldots,L\}, j \in\{i+2,\ldots,L+1\}$.
		\item[\emph{\textbf{(H2)}}] In $\tA$ we have $\d_xf\d^2_xg_1\neq\d^2_xf\d_xg_1$.
	\end{enumerate}
	If (H1) and (H2) are both valid, the local H{\"o}rmander condition holds at any $\Phi\in \tE^*$.
\end{thm}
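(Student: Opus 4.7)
The plan is to exhibit $L+2$ vector fields in $\cL^*$ whose values at $(t,\Phi)$ span $\R^{L+2}$ for every $t\in[0,\infty)$. Since $M=N=1$, the only driving vector field is $V_1=(\sigma(z),0_L,\sigma(z))^\top$, and the natural candidates are $V_1$ together with the iterated brackets $R_n:=[V_0,R_{n-1}]\in\cL^*$ for $n=1,\ldots,L+1$, starting from $R_0:=V_1$.

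Introduce the linear projection $\pi\colon\R^{L+2}\to\R^{L+1}$, $\pi(v^x,v^y,v^z):=(v^x-v^z,v^y)$. Then $\pi(V_1(\Phi))=0$ (since the $x$- and $z$-components of $V_1$ coincide and its $y$-block vanishes), while $V_1(\Phi)\neq 0$ because $\sigma(z)\neq 0$ for $z\in\tB$. Linear independence of $V_1,R_1,\ldots,R_{L+1}$ at $(t,\Phi)$ therefore reduces to linear independence of the images $\pi(R_1),\ldots,\pi(R_{L+1})$ at $(t,\Phi)$ in $\R^{L+1}$ for every $t\geq 0$. Iterating Lemma \ref{lem:liecascade} decomposes each $R_n$ into a pure $(t,z)$-part of shape $(A_n,0_L,A_n)^\top$ (which lies in $\ker\pi$) plus summands of the form $a(t,z)(W(x,y),0_N)^\top$ with $W$ a member of the finite-dimensional space generated by $\partial_x^kF$, $[F,\partial_xF]$ and iterated mixtures such as $[F,[F,\partial_xF]]$. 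Consequently $\pi(R_n)(t,\Phi)$ is a linear combination of such $(x,y)$-vectors with coefficients polynomial in $\hat b(t,z)=S_0(t)+\tilde b(z)$ and in the $z$-derivatives of $\hat b$ and $\sigma$.

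The core of the argument is an induction on $n$ showing that, modulo $\mathrm{span}\{\pi(R_k)(t,\Phi):k<n\}$, each $\pi(R_n)(t,\Phi)$ has a non-vanishing component in a new coordinate direction. For $n=1$, $\pi(R_1)=-\sigma\partial_xF$ has $y_1$-component $-\sigma\partial_xg_1\neq 0$ by H1. For $n=2$, the $2\times 2$ minor of $[\pi(R_1),\pi(R_2)]$ on rows $(x,y_1)$ is, up to a non-zero prefactor, a polynomial in $\hat b$ whose leading coefficient is $\sigma^2(\partial_xf\cdot\partial_x^2g_1-\partial_x^2f\cdot\partial_xg_1)$, which is non-zero exactly by H2. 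For $n\geq 3$, the iterated Lie bracket $[F,[F,\ldots,[F,\partial_xF]\ldots]]$ occurring inside $\pi(R_n)$ probes deeper into the cascade; using the constraint $\partial_{y_i}g_j=0$ for $j\geq i+2$ together with the non-vanishing subdiagonal entries $\partial_{y_{k-1}}g_k\neq 0$ of H1, one verifies that the $y_n$-component of the new contribution carries a leading factor proportional to $\sigma^n\cdot\partial_xg_1\cdot\prod_{k=2}^{n}\partial_{y_{k-1}}g_k\neq 0$, while the $y_k$-components for $k>n$ can be absorbed into linear combinations of $\pi(R_1),\ldots,\pi(R_{n-1})$.

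The main technical obstacle is the inductive bookkeeping required to identify, within the proliferation of summands produced by Lemma \ref{lem:liecascade}, precisely the leading factor in the $y_n$-component and to show that the cascade structure from H1 propagates through the iterated Lie brackets $[F,\ldots,[F,\partial_xF]]$ without being destroyed. A secondary issue is uniformity in $t$: the determinant $\Delta(t,z):=\det[\pi(R_1)(t,\Phi),\ldots,\pi(R_{L+1})(t,\Phi)]$ is a non-trivial polynomial in $\hat b(t,z)$ whose top coefficient does not vanish, so it can be zero at only finitely many values of $\hat b$; at those critical values one augments the list with further brackets such as $R_{L+2}$ or $[V_1,R_n]$ (still in $\cL^*$), whose different polynomial dependence on $\hat b$ precludes a common zero and ensures the full span for every $t\geq 0$.
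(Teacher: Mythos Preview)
Your overall plan --- iterate $[V_0,\cdot]$ starting from $V_1$, project via $\pi$, and use the cascade structure from (H1) to peel off one new $y$-direction at each step --- is precisely the content of Lemma~\ref{lem:liecascade2}. Two things go wrong in your execution, though.

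First, the bookkeeping is off. Under (H1), Lemma~\ref{lem:liecascade2} shows that for $n=1,\ldots,L$ the vector $\pi(R_n)=-\bar L_n$ is supported on coordinates $(x,y_1,\ldots,y_n)$ and its $y_n$-entry equals $(-1)^{n+1}\sigma(z)$ times $\partial_xg_1\prod_{k=2}^n\partial_{y_{k-1}}g_k$. This coefficient is \emph{exactly} $\pm\sigma(z)$, not the leading term of a polynomial in $\hat b$ with prefactor $\sigma^n$, and in particular it is $t$-independent. Hence $R_1,\ldots,R_L$ already fill the directions $y_1,\ldots,y_L$, leaving only the $x$-direction for one further bracket. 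Your scheme instead tries to extract $x$ from $R_2$ via (H2) and $y_n$ from $R_n$ for $n\ge3$, which forces you to look for a $y_{L+1}$-component in $R_{L+1}$; there is no such coordinate. Moreover, $\pi(R_2)$ has a non-zero $y_2$-entry as well, so ``the new direction at $n=2$ is $x$'' is not even well-posed in your inductive framework.

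Second --- and this is the real gap --- your candidate $R_{L+1}=[V_0,R_L]$ for the missing direction carries genuine $\hat b(t,z)$-dependence, and your proposed remedy (``augment with $R_{L+2}$ or $[V_1,R_n]$ at the finitely many bad values of $\hat b$'') is not a proof: nothing rules out a common zero, and you give no argument that the extra brackets actually span the gap at those $t$. The paper sidesteps this entirely by replacing $R_{L+1}$ with $[V_1,[V_1,V_0]]$, whose image under $\pi$ is $\sigma^2\partial_x^2F+\sigma\sigma'\partial_xF$ and hence manifestly $t$-independent. Subtracting the appropriate multiple of $\pi(R_1)=-\sigma\partial_xF$ reduces linear independence from $V_1,R_1,\ldots,R_L$ to the non-vanishing of $\partial_xf\,\partial_x^2g_1-\partial_x^2f\,\partial_xg_1$, which is exactly (H2). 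So (H2) enters once, for this final $t$-free bracket, not at the $n=2$ step.
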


\begin{remark}\label{rem:H1H2}
A more compact formulation of the cascade condition (H1) using \eqref{eq:FohneS} and \eqref{eq:xi} would be that in $\tA$ for all $i \in\{1,\ldots,L\}$ we have
\[
\d_{\xi_i}F_{i+1} \neq 0 \quad \text{and} \quad \d_{\xi_i}F_j=0 \text{ for all $j \in\{i+2,\ldots,L+1\}$},
\]
or in less strict but maybe more intuitive notation 
\[
\d_{\xi_i}F=\begin{pmatrix} \Box_{i} \\ \neq 0 \\ 0_{L-i}	\end{pmatrix} \quad  \text{for all $i \in\{1,\ldots,L\}$},
\]
where $\Box_i\in\R^i$ indicates that we do not care about the values of these first $i$ entries. Intuitively speaking, while the variable $\xi_i$ may or may not influence the first $i$ components of $F$, it definitely influences $F_{i+1}$ and definitely does not immediately influence any components afterwards -- hence the term cascade structure. This setting is mainly inspired by the diffusion given by (5.26) in \cite{Cascade}, which features two cascades that are similar in nature to the scenario we study in this section. There, the authors discuss an approximating diffusion for a model of interacting neurons that are divided into two groups. In each of these groups, a current is passed on from one neuron to the next, while only at the ends of these chains neurons are subject to noise and interact with neurons from the other group.

Condition (H2) serves the purpose of decoupling $X$ and $Z$, as will become clear in the proof of Theorem \ref{thm:cascade}. Note that if (H1) already holds,
\begin{enumerate}
	\item[\textbf{(H2')}] for all $(x,y)\in \tA$ the vectors $\d_x F(x,y)$ and $\d^2_x F(x,y)$ are linearly independent,
\end{enumerate}
is equivalent to (H2).
\end{remark}

In order to prove Theorem \ref{thm:cascade}, we take Lie brackets in a way that is somewhat dual to the approach we used in Section \ref{sect:star}. If we start with the vector field $V_1$ corresponding to the single column of the $(1+L+1)\times 1$-dimensional diffusion matrix and then keep taking Lie brackets with the vector field $V_0$ corresponding to the drift, intuition suggests that the structure from (H1) will turn up there again in some sense, providing more and more linearly independent vectors. So, this time we set
\begin{equation}\label{eq:cascadeL}
L_1:=[V_0,V_1], \quad \text{and} \quad L_n:=[V_0,L_{n-1}] \quad \text{for all $n\ge2$}.
\end{equation}
We have already calculated $L_1$ and $L_2$ in \eqref{eq:iacascadelemma1} and \eqref{eq:iacascadelemma2}. Using Lemma \ref{lem:liecascade}, we are able to get a better grasp of what $L_n$ looks like for larger $n$. This is the content of the following Lemma, which shows that the cascade structure in $F$ is indeed passed on to $L_1,\ldots,L_L$ in some sense.

\begin{lem}\label{lem:liecascade2}
	Let $n\in\{1,\ldots,L\}$ and assume that the cascade condition (H1) holds. Then for all $\xi=(t,x,y,z)\in [0,\infty)\times \tE^*$ we can write
	\begin{equation}\label{eq:cascadeLbar}
	L_n(\xi)=\begin{pmatrix} A_n(t,z)\\ 0_L \\ A_n(t,z) \end{pmatrix} - \begin{pmatrix} \bar L_n(\xi) \\ 0 \end{pmatrix},
	\end{equation}
	where
	\begin{equation}\label{eq:cascadeLbar2}
	\bar L_n(\xi):=\sum_{i=1}^{m_n-1} a_{n,i}(t,z) W_{n,i}(x,y) + (-1)^{n+1}\sigma(z)W_{n,m_n}(x,y)
	\end{equation}
	for suitable $m_n\in\N$ and smooth functions $a_{n,i}$, $A_n$, and $W_{n,i}$ with
	\[
	W_{n,i}=\begin{pmatrix} \Box_n \\ 0_{1+L-n}	\end{pmatrix} \quad  \text{for all $i \in\{1,\ldots,m_n-1\}$}, \quad W_{n,m_n}=\begin{pmatrix} \Box_n \\ \neq 0\\ 0_{1+L-(n+1)} \end{pmatrix}
	\]
	everywhere in $\tA$.
\end{lem}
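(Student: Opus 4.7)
The plan is to prove the lemma by induction on $n\in\{1,\ldots,L\}$. For the base case $n=1$, I read off from \eqref{eq:iacascadelemma1} (specialized to $N=M=1$) that
\[
L_1(\xi)=\begin{pmatrix}[\hat b(t,\cdot),\sigma](z)\\ 0_L\\ [\hat b(t,\cdot),\sigma](z)\end{pmatrix}-\sigma(z)\begin{pmatrix}\d_x F(x,y)\\ 0\end{pmatrix},
\]
so I set $A_1(t,z):=[\hat b(t,\cdot),\sigma](z)$, $m_1:=1$, $W_{1,1}:=\d_xF$; by (H1) the second entry $\d_xg_1$ of $W_{1,1}$ is nonzero on $\tA$ and the entries in positions $3,\ldots,L+1$ vanish there, matching the required shape $(\Box_1,\neq 0,0_{L-1})^\top$.

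For the inductive step from $n-1$ to $n\in\{2,\ldots,L\}$, I first absorb the leading summand of $\bar L_{n-1}$ into the sum by setting $\tilde a_{n-1,i}:=a_{n-1,i}$ for $i<m_{n-1}$ and $\tilde a_{n-1,m_{n-1}}:=(-1)^n\sigma$, so that $L_{n-1}$ fits the template of Lemma \ref{lem:liecascade}. Applying that lemma to $L_n=[V_0,L_{n-1}]$ yields a ``top block'' from which I read off $A_n(t,z):=[\hat b(t,\cdot),A_{n-1}(t,\cdot)](z)+\d_tA_{n-1}(t,z)$, plus a ``bottom block'' of four types of contributions involving $W_{n-1,i}$, $\d_xW_{n-1,i}$, $\d_xF$, and $[F,W_{n-1,i}]$ (each carrying a $(t,z)$-coefficient). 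The natural candidate for the new leading term is the single summand with $i=m_{n-1}$ in the fourth group, whose coefficient $(-1)^n\sigma$ multiplies $[F,W_{n-1,m_{n-1}}]$; I accordingly define $W_{n,m_n}:=-[F,W_{n-1,m_{n-1}}]$, so that it enters with coefficient $(-1)^{n+1}\sigma$ as prescribed.

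The easy checks are that every other bottom-block vector field vanishes in positions $n+1,\ldots,L+1$, hence qualifies as a $W_{n,i}$ with $i<m_n$. This is immediate for $W_{n-1,i}$ and $\d_xW_{n-1,i}$ since the inductive hypothesis already gives zeros in positions $n,\ldots,L+1$; for $\d_xF=(\d_xf,\d_xg_1,0,\ldots,0)^\top$ I use that (H1) forces zeros in positions $3,\ldots,L+1$; and for $[F,W_{n-1,i}]$ with $i<m_{n-1}$ the computation in the next paragraph applies verbatim (noting that $W_{n-1,i}$ itself vanishes from position $n$ onwards).

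The hard part will be verifying that $W_{n,m_n}=-[F,W_{n-1,m_{n-1}}]$ itself has the claimed shape $(\Box_n,\neq0,0_{L-n})^\top$. Writing $W:=W_{n-1,m_{n-1}}$, the inductive hypothesis gives $W_n\neq0$ on $\tA$ and $W_k\equiv0$ there for $k\ge n+1$. From the componentwise expansion
\[
[F,W]_k=\sum_{j=1}^{1+L}\bigl((\d_{\xi_j}W_k)F_j-(\d_{\xi_j}F_k)W_j\bigr),
\]
for $k\ge n+2$ the first sum vanishes since $W_k\equiv0$, and in the second sum the terms with $j\ge n+1$ vanish since $W_j=0$, while those with $j\le n$ vanish by the cascade condition $\d_{\xi_j}F_k=0$ for $k\ge j+2$ from (H1). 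For $k=n+1$ the first sum again vanishes, and (H1) kills every term with $j\le n-1$ in the second sum because $F_{n+1}=g_n$ and $\d_{\xi_j}g_n=0$ for such $j$; the only surviving term is $[F,W]_{n+1}=-(\d_{y_{n-1}}g_n)W_n$, which is nonzero on $\tA$ by (H1) together with the inductive hypothesis. This closes the induction.
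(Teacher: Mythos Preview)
Your proof is correct and follows the same inductive route as the paper via Lemma~\ref{lem:liecascade}; the only cosmetic difference is that you take the whole bracket $-[F,W_{n-1,m_{n-1}}]$ as the new leading term $W_{n,m_n}$, whereas the paper extracts from it only the single summand $W_{n-1,m_{n-1}}^{(n)}\,\d_{\xi_n}F$ and pushes the remaining pieces into the lower-order terms. One small imprecision: your justification ``zeros in positions $n,\ldots,L+1$'' for $W_{n-1,i}$ and $\d_xW_{n-1,i}$ fails at position $n$ when $i=m_{n-1}$, but since you only need zeros from position $n+1$ onward (shape $(\Box_n,0_{1+L-n})^\top$) this does not affect the argument.
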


\begin{proof}
	In analogy to Lemma \ref{lem:liestar2}, we prove this Lemma by induction. As seen in \eqref{eq:iacascadelemma1}, a representation of $L_n$ as in \eqref {eq:cascadeLbar} holds for $n=1$. We assume now that such a representation holds for some $n\in\{1,\ldots,L-1\}$. Set $a_{n,m_n}(t,z):=(-1)^{n+1}\sigma(z)$ for all $(t,z)\in[0,\infty)\times\tB$ and let $\xi=(t,x,y,z)\in [0,\infty)\times \tE^*$. Then, according to Lemma \ref{lem:liecascade}, we obtain
	\begin{align*}
		L_{n+1}(\xi)
		= & \begin{pmatrix} [\hat b(t,\cdot),A_n(t,\cdot)](z)+\d_t A_n(t,z)\\ 0_L \\ [\hat b(t,\cdot),A_n(t,\cdot)](z)+\d_t A_n(t,z) \end{pmatrix}  - \sum_{i=1}^{m_n} \d_ta_{n,i}(t,z) \begin{pmatrix} W_{n,i}(x,y) \\ 0 \end{pmatrix}\\
		&- A_n(t,z)\begin{pmatrix} \d_xF(x,y) \\ 0 \end{pmatrix} - \sum_{i=1}^{m_n}a_{n,i}(t,z) \begin{pmatrix} [F,W_{n,i}](x,y) \\ 0 \end{pmatrix} \\
		&- \sum_{i=1}^{m_n}\hat b(t,z)\left( a_{n,i}(t,z) \begin{pmatrix} \d_xW_{n,i}(x,y)\\ 0 \end{pmatrix} +\d_za_{n,i}(t,z) \begin{pmatrix}W_{n,i}(x,y)\\ 0 \end{pmatrix} \right).
	\end{align*}
	The first summand is already of the desired type, and every other summand has a vanishing last component. Therefore, we are left with the task to study the term
	\begin{align}\label{eq:lemliecascade2proof}
		\begin{split}
			\bar L_{n+1}(\xi)	:= & \sum_{i=1}^{m_n} \d_ta_{n,i}(t,z) W_{n,i}(x,y) + A_n(t,z)\d_xF(x,y) + \sum_{i=1}^{m_n}a_{n,i}(t,z) [F,W_{n,i}](x,y) \\
			&+ \sum_{i=1}^{m_n}\hat b(t,z)\Big( a_{n,i}(t,z) \d_xW_{n,i}(x,y)+\d_za_{n,i}(t,z) W_{n,i}(x,y)\Big).
		\end{split}
	\end{align}
	We have to prove that we can extract one summand from this sum that is of the type $(-1)^{n+2}\sigma(z)W(x,y)$ with some
	\[
	W(x,y)=\begin{pmatrix} \Box_{n+1} \\ \neq 0\\ 0_{1+L-(n+2)} \end{pmatrix},
	\]
	while every other summand is of the type $\phi(t,z)V(x,y)$ with some
	\[
	V(x,y)=\begin{pmatrix} \Box_{n+1} \\ 0_{1+L-(n+1)} \end{pmatrix}.
	\]
	In order to do so, we will expand the expressions in \eqref{eq:lemliecascade2proof} step for step and discuss the occurring terms one by one.
	
	Due to our induction hypothesis, we get that
	\[
	\text{$W_{n,i}(x,y)$, $\d_xW_{n,i}(x,y)$ are of the type $\begin{pmatrix} \Box_{n+1} \\ 0_{1+L-(n+1)}	\end{pmatrix}$ for all $i\in\{1,\ldots,m_n\}$,}
	\]
	and due to (H1), the same is true for $\d_xF(x,y)$. This is why it remains to look at
	\[
	\sum_{i=1}^{m_n} a_{n,i}(t,z)[F,W_{n,i}](x,y)= \sum_{i=1}^{m_n} a_{n,i}(t,z) \sum_{j=1}^{1+L} \left( F^{(j)} \d_{\xi_j} W_{n,i}- W_{n,i}^{(j)} \d_{\xi_j}F \right)(x,y).
	\]
	Thanks again to the induction hypothesis,
	\[
	\text{$\d_{\xi_j} W_{n,i}(x,y)=\begin{pmatrix} \Box_{n+1} \\ 0_{1+L-(n+1)}	\end{pmatrix}$ for all $i\in\{1,\ldots,m_n\}$ and $j\in\{1,\ldots,1+L\}$,}
	\]
	which is of course still valid for the same terms multiplied by $F^{(j)}(x,y)$. Therefore, it remains to discuss
	\[
	-\sum_{i=1}^{m_n} a_{n,i}(t,z) \sum_{j=1}^{1+L} W_{n,i}^{(j)}(x,y) \d_{\xi_j}F(x,y).
	\]
	Using the induction hypothesis once again, we know in particular that
	\[
	W_{n,i}^{(j)}(x,y)=0 \quad \text{for all $i\in\{1,\ldots,m_n-1\}$ and $j\in\{n+1,\ldots,1+L\}$}
	\]
	and
	\[
	W_{n,m_n}^{(j)}(x,y)=0 \quad \text{for all $j\in\{n+2,\ldots,1+L\}$.}
	\]
	Thus, it remains to look at
	\[
	-\sum_{i=1}^{m_n-1} a_{n,i}(t,z) \sum_{j=1}^{n} W_{n,i}^{(j)}(x,y) \d_{\xi_j}F(x,y) - (-1)^{n+1}\sigma(z) \sum_{j=1}^{n+1} W_{n,m_n}^{(j)}(x,y) \d_{\xi_j}F(x,y).
	\]
	Using (H1) again, we see that
	\[
	\d_{\xi_j}F(x,y) = \begin{pmatrix} \Box_{n+1} \\ 0_{1+L-(n+1)}	\end{pmatrix} \quad \text{for all }j\in\{1,\ldots,n\},
	\]
	which implies that it now remains to comment on the summand
	\[
	(-1)^{n+2}\sigma(z) W_{n,m_n}^{(n+1)}(x,y) \d_{\xi_{n+1}}F(x,y).
	\]
	The two leading scalar factors do not vanish (by \eqref{eq:sigmapos} and by the induction hypothesis), and
	\[
	\d_{\xi_{n+1}}F(x,y)=\begin{pmatrix} \Box_{n+1} \\ \neq 0\\ 0_{1+L-(n+2)} \end{pmatrix}
	\]
	thanks to (H1). Thus, $\bar L_{n+1}(\xi)$ is in fact of the desired form with
	\[
	W_{n+1,m_{n+1}}(x,y):=W_{n,m_n}^{(n+1)}(x,y) \d_{\xi_{n+1}}F(x,y)=\begin{pmatrix} \Box_{n+1} \\ \neq 0\\ 0_{1+L-(n+2)} \end{pmatrix},
	\]
	and the proof by induction is complete.
\end{proof}

We can now turn to the

\begin{proof}[Proof of Theorem \ref{thm:cascade}]
Assumption (H1) and Lemma \ref{lem:liecascade2} immediately imply that for arguments from $[0,\infty)\times \tE^*$ the vector fields $\bar L_1,\ldots,\bar L_L$ are of the type
\[
	\bar L_n=\begin{pmatrix} \Box_n \\ \neq 0\\ 0_{1+L-(n+1)} \end{pmatrix} \quad  \text{for all $n\in\{1,\ldots,L\}$,}
\]
which in turn yields their linear independence. Lemma \ref{lem:liecascade2} also implies that $L_n$ differs from
\[
-\begin{pmatrix} \bar L_n(\xi) \\ 0 \end{pmatrix}
\]
only by a multiple of the diffusion coefficient $V_1(z)$, which is nonzero since $z\in\tB$. Combining these two facts lets us conclude that
\[
V_1(z),L_1(\xi),\ldots,L_L(\xi)
\]
are linearly independent. In order to span the remaining dimension, we set
\begin{align*}
	L_{L+1}(\xi):=& [V_1,[V_1,V_0]](\xi)=-[V_1,L_1](\xi) \\
	%=& \begin{pmatrix} \zeta_{1,1}(t,z) \\ 0_L \\ \zeta_{1,1}(t,z) \end{pmatrix}+ \sigma^2(z)\begin{pmatrix}\d^2_x F(x,y) \\ 0	\end{pmatrix} + \sigma(z)\sigma'(z) \begin{pmatrix}\d_xF(x,y) \\ 0	\end{pmatrix} \\
	=& \begin{pmatrix} \zeta_{1,1}(t,z) \\ 0_L \\ \zeta_{1,1}(t,z) \end{pmatrix}+ \sigma^2(z)\begin{pmatrix}\d^2_x f(x,y) \\ \d^2_x g_1(x,y) \\ 0_L	\end{pmatrix} + \sigma(z)\sigma'(z) \begin{pmatrix}\d_x f(x,y) \\ \d_x g_1(x,y) \\ 0_L	\end{pmatrix},
\end{align*}
where we used \eqref{eq:iastarlemma1} and the cascade condition. Using similar arguments as in the proof of Corollary \ref{cor:stardiag}, we see after subtracting a suitable linear combination of $V_0(z)$ and $L_1(\xi)$ that $L_{L+1}(\xi)$ is linearly independent from $V_1(z),L_1(\xi),\ldots,L_L(\xi)$ whenever the decoupling condition (H2) is fulfilled.
\end{proof}

\begin{remark}
	Note that Lemma \ref{lem:liestar2} provides a general representation of $L_{\kappa,n}$ while specific conditions on $f$ and $g$ come in only afterwards. In contrast to this, the calculations of $L_n$ in Lemma \ref{lem:liecascade2} already rely explicitly on the condition (H1), without which the representation would become very bulky and rather unhelpful. This slightly different approach is also reflected in the fact that in the present section the assumptions on $f$ and $g$ are made not only in a single point but in the open neighborhood $\tA$.	
\end{remark}

\begin{example}
For $M=N=1$ and the choice of $g$ that is given in \eqref{eq:toyg2}, the system from Example \ref{ex:toy} obviously fulfills (H1) and (H2) in a neighborhood $\tA$ of $(\pm1,1,\ldots,1)$, and $\tB$ is non-empty unless $\sigma\equiv0$. In Example \ref{ex:TOYcontrolorbit}, we showed that $(\pm1,1,\ldots,1,z)$ is also $T$-attainable for any $z\in\R$. Since, as noted in Remark \ref{rem:lyapunov}, there is also a Lyapunov function in this setting, this means that all of the assumptions of Theorem \ref{thm:HLT} are satisfied and the solution to the corresponding SDE \eqref{eq:SDE} is positive Harris recurrent (even exponentially ergodic).
\end{example}

\begin{example}
	Even though it might seem like the oscillator model from Example \ref{ex:rotors} should involve some kind of cascade structure (at least in the setting with one-sided input), calculating the derivatives of the respective function $g$ shows that it actually does not (even after renumbering the variables). If we apply the strategy from \eqref{eq:cascadeL} anyway and do calculations in analogy to Lemma \ref{lem:liecascade2}, we can span $N+L=1+5$ space directions in the one-sided case at points where $w_1'(q_2-q_1)$ and $w_3'(q_2-q_3)$ do not vanish. For the case of two-sided input, we have $N=2$ and therefore we have to do the same construction twice: Once just as in \eqref{eq:cascadeL} and once with $V_1$ replaced with $V_2$. For the respective calculations, recall that Lemma \ref{lem:liecascade} works for any $N\in\N$. Again, we can span $N+L=2+4$ space directions at points where $w_1'(q_2-q_1)$ and $w_3'(q_2-q_3)$ do not vanish. For the time-homogeneous system from \cite{Cuneo} without explicit external equations, one can allow those derivatives to vanish as long as some higher order of them does not, as the authors prove in \cite[Lemma 5.3]{Cuneo}. In our case however, time-inhomogeneity and the extra dependence on $z$ in the drift make this more difficult. Decoupling of $X$ and $Z$ is even more problematic, since any dependence on the $x$-variable (i.e. $p_1$ or $(p_1,p_3)$, respectively) is linear, which rules out working with a condition like (H2). At the moment, we do not know how to provide a method that will work under assumptions that are neither too restrictive nor physically irrelevant.
\end{example}

\noindent\textbf{Acknowledgements.} The author would like to thank Reinhard H{\"o}pfner, Eva L{\"o}cherbach and Barbara Gentz for fruitful discussions and helpful remarks and suggestions.

\end{document}